 \newcommand{\pend}{\hfill \thicklines \framebox(5.5,5.5)[l]{}}
 \newenvironment{proof}{\noindent {\sc  Proof.} \rm}{\pend}
\numberwithin{equation}{section}
 \newtheorem{theorem}{Theorem}[section]
 \newtheorem{lemma}{Lemma}[section]
 \newtheorem{remark}{Remark}[section]
 \newtheorem{definition}{Definition}[section]
\begin{document}
 \pagenumbering{arabic} \thispagestyle{empty}
\setcounter{page}{1}

\begin{center}

%{\magenta{Magenta-part: Yiqiang's update}, \color{blue}Blue-part: Yiqiang's comments/questions!}, {\color{green}Green-part: Bin's comments}, {\color{red}Red part: Bin's change already being made}

\begin{center}{\Large{\bf Refined Tail Asymptotic Properties for the $M^X/G/1$ Retrial Queue}}

\

 { Bin Liu \footnotemark[1]$^{,a}$,\quad Jie Min $^a$ \quad Yiqiang Q. Zhao $^b$
 \\ {\small a. School of Mathematics \& Physics, Anhui Jianzhu University,
Hefei 230601, P.R.China}\\
{\small b. School of Mathematics and Statistics,
Carleton University, Ottawa, ON, Canada K1S 5B6}}
\end{center}

September 2019
\date{\today}

\end{center}
\footnotetext[1]{Corresponding author: Bin Liu, E-mail address:
bliu@amt.ac.cn}

\begin{abstract}
In the literature, retrial queues with batch arrivals and heavy service times have been studied and the so-called equivalence theorem has been established under the condition that the service time is heavier than the batch size. The equivalence theorem provides the distribution (or tail) equivalence between the total number of customers in the system for the retrial queue and the total number of customers in the corresponding standard (non-retrial) queue. In this paper, under the assumption of regularly varying tails, we eliminate this condition by allowing that the service time can be either heavier or lighter than the batch size. The main contribution made in this paper is an asymptotic characterization of the difference between two tail probabilities: the probability of the total number of customers in the system for the $M^X/G/1$ retrial queue and the probability of the total number of customers in the corresponding standard (non-retrial) queue. The equivalence theorem by allowing a heavier batch size is another contribution in this paper.

\medskip

\noindent \textbf{Keywords:} $M^X/G/1$ retrial queue, Number of customers,
Tail asymptotics, Regularly varying distribution.
\end{abstract}

\noindent \textbf{Mathematics Subject Classification (2000):}
60K25; 60E20; 60G50.

\section{Introduction}\label{intro}

Studies of tail asymptotic properties, expressed in terms of simple functions, often lead to approximations, error bounds for system performance, and computational algorithms, besides their own interest. These studies become more important when closed-form or explicit solutions are not expected. On the one hand, except for a very limited number of basic queueing models, it is not in general expected to have a simple closed-form or explicit solution for the stationary queue length or waiting time distribution when it exists, but on the other hand expressions or presentations in many cases do exist for the distribution in terms of transformations, say the generating function (GF) for the stationary queue length distribution or the Laplace-Stieltjes transform (LST) of the stationary waiting time distribution. These expressions or presentations (for the transformation of the distribution) mathematically contain complete amount of information about the distribution, but they cannot be theoretically inverted to simple or closed formulas or expressions for the distribution. Many retrial queues are such examples, for which we do not expect, in general, closed-form or explicit solutions for the stationary distribution of the queue-length process or the waiting time under a stability condition. However, expressions for the transform of the distribution are available, in terms of which tail asymptotic analysis might prevail.

It is our focus in this paper to carry out an asymptotic analysis for a type of retrial queues with batch arrivals, referred to as $M^X/G/1$ retrial queues. Studies on retrial queues are extensive during the past 30 years or so. Research outcomes and progress have been reported in more than 100 publications due to the importance of retrial queues in applications, as such in the areas of call centres, computer and telecommunication networks among many others. Earlier surveys or books include Yang and Templeton~\cite{Yang-Templeton:1987}, Falin~\cite{Falin:1990}, Kulkarni and Liang~\cite{Kulkarni-Liang:1997}, Falin and Templeton~\cite{Falin-Templeton:1997}, Artalejo~\cite{Artalejo:1999a, Artalejo:1999b}, and Artajelo and G\'{o}mez-Corral~\cite{Artalejo-GomezCorral:2008}, Artalejo~\cite{Artalejo:2010} and Kim and Kim~\cite{Kim-Kim:2016} are among the recent ones.
Studies on tail behaviour can be classified into two categories: light-tail and heavy tail.
For light-tailed behaviour, references include Kim, Kim and Ko~\cite{Kim-Kim-Ko:2007}, Liu and Zhao~\cite{Liu-Zhao:2010}, Kim, Kim and Kim~\cite{Kim-Kim-Kim:2010a}, Liu, Wang and Zhao~\cite{Liu-Wang-Zhao:2012}, Kim, Kim and Kim~\cite{Kim-Kim-Kim:2012}, Kim and Kim~\cite{Kim-Kim:2012}, Artalejo and Phung-Duc~\cite{Artalejo-PhungDuc:2013}, Kim~\cite{Kim:2015},
while for heavy-tailed behaviour, readers may refer to Shang, Liu and Li~\cite{Shang-Liu-Li:2006}, Kim, Kim and Kim~\cite{Kim-Kim-Kim:2010c},
Yamamuro~\cite{Yamamuro:2012}, Liu, Wang and Zhao~\cite{Liu-Wang-Zhao:2014}, and Masuyama~\cite{Masuyama:2014}.
In addition, there are many references in the literature for asymptotic analysis for the corresponding non-retrial queues, e.g., Asmussen, Kl\"{u}ppelberg and Sigman~\cite{Asmussen-Klupperlberg-Sigman:1999}, and Boxma and Denisov~\cite{Boxma-Denisov:2007}. For more references, we would like to mention two excellent surveys: Borst et al.~\cite{Borst-Boxma-Nunez-Queija-Zwart:2003}, and Boxma and Zwart~\cite{Boxma-Zwart:2007}.

Closely related to the model of our interest in this paper are references
\cite{Shang-Liu-Li:2006}, in which it was proved that if the number of customers in the standard $M/G/1$ queue has a subexponential distribution, then the number of customers in the corresponding $M/G/1$ retrial queue has the same tail asymptotic behaviour (referred to as the equivalence theorem);
\cite{Yamamuro:2012}, in which the same result as in \cite{Shang-Liu-Li:2006} was proved for the batch arrival $M^X/G/1$ retrial queue under the condition that the batch size has a finite exponential moment; and \cite{Masuyama:2014}, in which the main result in \cite{Yamamuro:2012} was extended to a $BMAP/G/1$ retrial queue.
%\cite{Kim:2015}, in which a light-tailed asymptotic property was obtained for the queue length distribution under the condition that both the batch size and the service time have a finite exponential moment;,

It has been noticed that in the literature, for a retrial queue with batch arrivals and general service times, the impact of the arrival batch on the tail equivalence property has not been sufficiently addressed. For example, in \cite{Yamamuro:2012} for the $M^X/G/1$ retrial queue, it is assumed that the arrival batch has a finite exponential moment; or in \cite{Masuyama:2014} for the $BMAP/G/1$ retrial queue, the light-tailed condition was relaxed to possibly moderately heavy-tailed batches (see Asmussen, Kl\"{u}pperlberg and Sigman~\cite{Asmussen-Klupperlberg-Sigman:1999} for a definition, i.e., the batch size has a tail not heavier than $e^{-\sqrt{x}}$). The common feature in both situations is the fact that compared to the batch size, the tail of the service time is heavier. To the best of our knowledge, in the literature, there is no report on the tail equivalence between a standard batch arrival queue and its corresponding retrial queue if the the arrival batch size is heavier than or equivalent to the service time.

For approving the equivalence theorem, it is usually to establish a stochastic decomposition first. This decomposition writes the total number of customers in the system for the retrial queue as the sum of the total number of customers in the system for the corresponding (non-retrial) queue and another independent random variable. The equivalence theorem is to prove that the total number of customers in the system for the retrial queue and the total number of customers in the system for the corresponding non-retrial queue have the same type of tail asymptotic behaviour. That has been done in the literature for the $M/G/1$ case, and extended to the $M^X/G/1$ and $BMAP/G/1$ cases under the assumption that the batch size is lighter than the service time. In terms of the decomposition, it implies that the other variable is simply dominated by the total number of customers in the system of the standard (non-retrial) model. Therefore, no detailed analysis for the other variable is needed for establishing the equivalence.

In this paper, we consider the $M^X/G/1$ retrial queue, the same model studied in \cite{Yamamuro:2012}. The equivalence theorem is now proved for the case in which the batch size has regularly varying tail, so it is heavier than the moderately heavy tail and without the assumption that the service time is heavier than the batch size. Another more interesting result (our main contribution in this paper) is an asymptotic characterization of the difference between two tail probabilities: the probability of the total number of customers in the system for the $M^X/G/1$ retrial queue and the probability of the total number of customers in the corresponding standard (non-retrial) queue. The difference between the total number $L_{\mu}$ of customers in the system for the $M^X/G/1$ retrial queue and the total number $L_{\infty}$ of customers in the corresponding standard (non-retrial) queue is the negligent (dominated) variable when establishing the equivalence theorem and therefore the asymptotic behaviour in the tail probability of this difference has not been studied in the literature. The main results of this paper are stated in Theorem~\ref{Theorem refined}.

The rest of the paper is organized as follows:
in Section~\ref{sec:2}, we describe the $M^X/G/1$ retrial queue model and rewrite the GF (a literature result) for $D^{(0)}$ (we indeed have $L_{\mu}=L_{\infty}+D^{(0)}$ in terms of the stochastic decomposition; in Section~\ref{decomposition},
a further decomposition, together with its analysis, of each component in the decomposition in Section~\ref{sec:2} is provided;
in Section~\ref{sec:4}, asymptotic analysis on the components in the decompositions given in Section~\ref{decomposition} is carried out; we complete the proof to our key result (the tail asymptotic behaviour of $D^{(0)}$) in Section~\ref{sec:5}; the refined tail equivalence theorem (main) for the total number of customers is proved in Section~\ref{sec:6}; the asymptotic tail behavior for $D^{(1)}$ is provided in Section~\ref{sec:7}; and discussions on the key condition of Lemma~\ref{lemma-F1F2} and two examples are provided in the final section. Appendix~\ref{app-A} contains some of the literature results, together with our verified preliminary results, needed for proving our main theorem; and Appendix~\ref{app-B} provides the proofs for the $\Delta$-analyticity, required for the discussions of two types of examples in Section~\ref{sec:8}.

\section{Priliminaries}
\label{sec:2}

%\red{(Will change batch size $W$ to $X$, $\omega$ to $\chi$ and $\omega_k$ to $\chi_k$ later!)}{\color{green}Agree!, but do not mix $w_k$ and $\omega_k$.}

In this paper, we consider the $M^X/G/1$ retrial queue (the same model considered in \cite{Yamamuro:2012}), in which the primary customers arrive in batches, the successive arrival epochs form a Poisson process with rate $\lambda$, and the generic batch size $X$ has the probability distribution $P\{X=k\}$ for $k\ge 1$ with a finite mean $\chi_1$.
If the server is free at the arrival epoch, then one of the arriving customers receives service immediately and the others join the orbit becoming repeated customers, whereas if the server is busy, all arriving customers join the orbit becoming repeated customers. Each of the repeated customers in the orbit independently repeatedly tries for receiving service after an exponential time with rate $\mu$ until success, or until it finds the server idle and then starts its service immediately. The customer in service leaves the system immediately after the completion of its service. Both primary and repeated customers require the same amount of the service time. Assume the generic service time $B$ has the probability distribution $B(x)$ with $B(0)=0$ with a finite mean $\beta_1$. Let $\rho=\lambda\beta_1 \chi_1$. It is well known that the system is stable if and only if (iff) $\rho<1$, which is assumed to hold throughout the paper.

We use $\beta(s)$ and $\beta_n$ to represent the LST and the $n$th moment of $B(x)$, respectively.
The generating function (GF) of $X$ is denoted by $X(z)=E(z^X)=\sum_{k=1}^{\infty}P\{X=k\} z^k$. In addition, we define $X_0=X-1$ and then it is clear that $X_0(z)=E(z^{X_0})=X(z)/z$.

Let $N_{orb}$ be the number of the repeated customers in the orbit, and $C_{ser}=1\mbox{ or}\ 0$ corresponds to the server being busy or idle, respectively.
Let $D^{(0)}$ ($D^{(1)}$) be a random variable (rv) having the same distribution as the conditional distribution of the number of repeated customers in the orbit given that the server is free (busy).
 It is clear that
$D^{(0)}$ takes nonnegative integers with the GF $D^{(0)}(z)=E(z^{D^{(0)}}) \stackrel{\rm def}{=} E(z^{N_{orb}}|C_{ser}=0)$. Note that $P\{C_{ser}=0\}=1-\rho$.
The following result on $D^{(0)}(z)$ (page 174 of Falin and Templeton~\cite{Falin-Templeton:1997}) is our start point:

\begin{eqnarray}\label{D{(0)}(z)}
D^{(0)}(z)=\exp\left\{-\frac {\lambda}{\mu}\int_z^1\frac {1-\beta(\lambda-\lambda X(u)) X_0(u)} {\beta(\lambda-\lambda X(u))-u} du\right\}.
\end{eqnarray}

Our particular interest is to analyze the asymptotic behavior of the tail probability for $D^{(0)}$ which is the independent increment from $L_{\infty}$ to $L_{\mu}$ in the stochastic decomposition, see, e.g., \cite{Yamamuro:2012} and also Section~\ref{sec:6}, from which the tail asymptotic behaviour (refined equivalence theorem) for the total number of customers is proved in Section~\ref{sec:6}, and the tail asymptotic behaviour for $D^{(1)}$
is also a consequence of the above asymptotic result (see Section~\ref{sec:7}).
To proceed, we first rewrite (\ref{D{(0)}(z)}). Let

\begin{eqnarray}
K^{\ast}(u)&=&\frac {1-\beta(\lambda-\lambda X(u)) X_0(u)} {(\rho+\chi_1 -1)(1-u)},\label{K{ast}(u)}\\
K^{\circ}(u)&=&\frac {(1-\rho)(1-u)} {\beta(\lambda-\lambda X(u))-u},\label{K{circ}(u)}\\
K(u)&=&K^{\ast}(u)\cdot K^{\circ}(u),\label{K(u)}\\
\psi &=&\frac{\lambda(\rho+\chi_1 -1)}{\mu(1-\rho)}.\label{psi}
\end{eqnarray}
It immediately follows from (\ref{D{(0)}(z)}) that
\begin{eqnarray}\label{D{(0)}(z)-2}
D^{(0)}(z)=\exp\left\{-\psi\int_z^1 K(u)  du\right\}.
\end{eqnarray}

The analysis of $D^{(0)}$ will be carried out in the following three sections: in Section~\ref{decomposition} we establish further stochastic decompositions for each of the two components (having GFs $K^{\ast}(u)$ and $K^{\circ}(u)$, respectively) in the decomposition of a random variable having the GF $K(u)$; in Section~\ref{sec:4}, asymptotic analysis on the components in the decomposition is carried out; and we complete the proof to the key result (the tail asymptotic behaviour of $D^{(0)}$) in Section~\ref{sec:5}.

\section{Stochastic decompositions related to $K(z)$}
\label{decomposition}

In this section, we first prove that both $K^{\ast}(z)$ and $K^{\circ}(z)$ are the GFs of the probability distributions for two discrete nonnegative random variables (rvs), denoted by $K^{\ast}$ and $K^{\circ}$, respectively. Assume that $K^{\ast}$ and $K^{\circ}$ are independent. Therefore, according to (\ref{K(u)}), $K(z)$ is the GF of $K=K^{\ast} + K^{\circ}$.
We then further decompose $K^{\ast}$ and $K^{\circ}$, respectively, into sums of independent rvs, for which we can carry out tail asymptotic analysis (given in the next section).

To see $K^{\ast}(z)$ is the GF for a probability distribution, we need to see the following: (1) $\beta(\lambda-\lambda X(z))$ is the GF for a discrete nonnegative random variable (rv), so is $\beta(\lambda-\lambda X(z)) X_0(u)$; and (2) for a GF $Q(z)$ of a discrete nonnegative rv, $1-Q(z)/(1-z)$ is essentially (by missing a constant) the GF of its equilibrium distribution. Specifically, we have the following facts (Facts~A--D).

\noindent{\bf Fact A:}
Let $N_{B}$ and $N_{B_X}$ be the number of batches and the total number of customers arrived within a service time $B$, respectively.
It is then clear that $N_{B_X}=X^{(1)}+X^{(2)}+\cdots+X^{(N_{B})}$, where $X^{(1)}, X^{(2)},\cdots, X^{(N_{B})}$ are independent copies of the batch size $X$.
It is well known that
\begin{eqnarray} \label{eqn:BX}
E(z^{N_{B}}) &=& \int_0^{\infty}\sum_{k=0}^{\infty}\frac {(\lambda x)^{k}}{k!}e^{-\lambda x}{z^k} dB(x)=\beta(\lambda-\lambda z),\\
E(z^{N_{B_X}}) &=& \int_0^{\infty}\sum_{k=0}^{\infty}\frac {(\lambda x)^{k}}{k!}e^{-\lambda x}(X(z))^k dB(x)=\beta(\lambda-\lambda X(z)).
\end{eqnarray}

Let $X_0$ and $N_{B_X}$ are independent, then $\beta(\lambda-\lambda X(z))  X_0(z)$ is the GF of $N_{B_XX_0}\stackrel{\rm def}{=} N_{B_X}+X_0$.

\noindent{\bf Fact B:}
$E(N_{B})=\lambda\beta_1$, $E(N_{B_X})=\lim_{z\uparrow 1}\frac {d}{dz}\beta(\lambda-\lambda X(z))=\lambda\beta_1\chi_1=\rho$, and for $N_{B_XX_0}$,
\begin{equation}
    E(N_{B_XX_0})=E(N_{B_X}+X_0)=\rho+\chi_1-1.\label{N_{B_XX_0}}
\end{equation}

\noindent{\bf Fact C:}
Let $N$ be an arbitrary discrete nonnegative rv with the GF $Q(z)=\sum_{n=0}^{\infty}q(n)z^n$, where
$q(n)=P\{N=n\}$. Denote by $\overline{q}(n)$ the tail probability of $N$, i.e.,
$\overline{q}(n)\stackrel{\rm def}{=}P\{N>n\}=\sum_{k=n+1}^{\infty}q(k)$, $n\ge 0$.
Under the assumption that $E(N)<\infty$, the discrete equilibrium probability distribution associated with $\{q(n)\}_{n=0}^{\infty}$ is defined by
\begin{equation}
    q^{(de)}(n)\stackrel{\rm def}{=}\overline{q}(n)/E(N)=P\{N>n\}/E(N).
\end{equation}
Let us use the notation $N^{(de)}$ to represent a rv having the distribution $\{q^{(de)}(n)\}_{n=0}^{\infty}$. Then, the GF of $\{q^{(de)}(n)\}_{n=0}^{\infty}$ is given by
\begin{eqnarray}\label{Q{(de)}(z)}
Q^{(de)}(z)&=&\displaystyle\frac {1}{E(N)}\cdot\frac {1-Q(z)}{1-z},
\end{eqnarray}
which follows from the fact that
\begin{eqnarray}\label{overline{Q}(z)}
\sum_{n=0}^{\infty}\left(\sum_{k=n+1}^{\infty}q(k)\right)z^n=\sum_{k=1}^{\infty}\sum_{n=0}^{k-1}q(k) z^n=\sum_{k=0}^{\infty}\frac {q(k) (1-z^k)} {1-z}
=\frac {1-Q(z)}{1-z}.
\end{eqnarray}

Now, according to (\ref{K{ast}(u)}) and the above Facts, we have
\begin{equation}
    K^{\ast}\stackrel{\rm def}{=}N_{B_XX_0}^{(de)},
\end{equation}
where the symbol
$\stackrel{\rm def}{=}$ means the equality in probability distribution, or $K^{\ast}(z)$ is the GF of the discrete equilibrium distribution of $N_{B_XX_0}$.
\bigskip

\noindent{\bf Fact D:} It can be shown that $K^{\circ}(z)$ is also the GF of a discrete probability distribution.
Let $B^{(e)}(x)$ be the equilibrium distribution of $B(x)$,
which is defined by $1-B^{(e)}(x)= \beta_1^{-1}\int_x^{\infty}(1-B(t))dt$. It is well known that the LST of $B^{(e)}(x)$ is
$\displaystyle\beta^{(e)}(s)=(1-\beta(s))/(\beta_1 s)$. Moreover, by Fact~C, we know that $X^{(de)}(z)\stackrel{\rm def}{=}(1-X(z))/(\chi_1(1-z))$
is the GF of a discrete nonnegative rv, denoted by $X^{(de)}$. Therefore (\ref{K{circ}(u)}) can be rewritten as
\begin{eqnarray}\label{K{circ}(z)-2}
K^{\circ}(z)&=&(1-\rho)\left[1-\frac {1-\beta(\lambda-\lambda X(z))}{1-X(z)} \cdot \frac {1-X(z)}{1-z}\right]^{-1}\nonumber\\
&=&\frac {1-\rho}{1-\rho\beta^{(e)}(\lambda-\lambda X(z))\cdot X^{(de)}(z)}\nonumber\\
&=&\sum_{k=0}^{\infty}(1-\rho)\rho^{k}\left(\beta^{(e)}(\lambda-\lambda X(z))\cdot X^{(de)}(z)\right)^k.
\end{eqnarray}
Let $B^{(e)}$ be a rv with probability distribution function $B^{(e)}(x)$.  Denote by $N_{B^{(e)}}$ and $N_{B^{(e)}_X}$ the number of batches and the total number of customers arriving within a random time $B^{(e)}$, respectively.
By Fact~A, we immediately know that $\beta^{(e)}(\lambda-\lambda X(z))$ is the GF of a discrete nonnegative rv, denoted by $N_{B^{(e)}_X}$. Therefore,
$\beta^{(e)}(\lambda-\lambda X(z))\cdot X^{(de)}(z)$ is the GF of
$N_{B^{(e)}_XX^{(de)}}\stackrel{\rm def}{=}N_{B^{(e)}_X}+X^{(de)}$, where $N_{B^{(e)}_X}$ and $X^{(de)}$
are independent.
From (\ref{K{circ}(z)-2}), $K^{\circ}$ can be viewed as the geometric sum of i.i.d. rvs, i.e.,
\begin{eqnarray}\label{K{circ}}
K^{\circ}=N_{B^{(e)}_XX^{(de)}}^{(1)}+N_{B^{(e)}_XX^{(de)}}^{(2)}+\cdots+N_{B^{(e)}_XX^{(de)}}^{(J)}\quad \mbox{for $J\ge 1$, and $K^{\circ}=0$ if $J=0$},
\end{eqnarray}
where $P(J=k)=(1-\rho)\rho^{k}$ ($k\ge 0$), rvs $N_{B^{(e)}_XX^{(de)}}^{(i)}$ ($i\ge 1$) are independent copies of $N_{B^{(e)}_XX^{(de)}}$, and $J$ and $N_{B^{(e)}_XX^{(de)}}^{(i)}$ ($i\ge 1$) are independent.

Finally, it follows from Facts~C and D, and the expression in (\ref{K(u)}) that $K$ can be regarded as the sum of independent rvs $K^{\ast}$ and $K^{\circ}$, i.e.,
\begin{eqnarray}\label{KK^{(a)}K^{(b)}}
K\stackrel{\rm def}{=}K^{\ast}+K^{\circ}
\end{eqnarray}
having the GF given in (\ref{K(u)}).

\section{Asymptotic tail probability for the rv $K$}
\label{sec:4}

In this subsection, we present tail asymptotic results for the components in the stochastic decompositions for $K^{\ast}$ and $K^{\circ}$, based on which our key result (Theorem~\ref{Theorem 3.2}) on the asymptotic tail behavior for $D^{(0)}$ is proved. For convenience of readers, a collection of literature results, required in this paper, are provided in Appendix~\ref{app-A}.

Throughout the rest of the paper, $R_{\sigma}$ and $S$ are the collections of the regularly varying (at $\infty$) functions with index $\sigma$ and subexponential functions, respectively, and $L(x)$ is a slowly varying (at $\infty$) function. Refer to Appendix~\ref{app-A} for more details.
It is also worthwhile to mention that for a distribution $F$ on $(0,\infty)$, if $1-F(x)\in R_{-\alpha}$ for $\alpha\ge 0$, then $F\in \mathcal S$
(see, e.g., Embrechts, Kluppelberg and Mikosch~\cite{Embrechts1997}).

Our discussion is based on the assumption that both service time $B$ and the batch size $X$ have regularly varying tails. Specifically, we make the following assumptions:
\begin{description}
\item[A1.] $P\{B>x\}\sim x^{-d_B}L(x)$ as $x\to\infty$ where $d_B>1$; and
\item[A2.] $P\{X>j\}\sim c_X\cdot j^{-d_X}L(j)$ as $j\to\infty$ where $d_X>1$ and $c_X\ge 0$.
\end{description}

\begin{remark}\label{Remark 3.2a}
It is a convention that in A2, $c_X=0$ means that
\[
    \lim_{j\to\infty}\frac {P\{X>j\}} {j^{-d_X}L(j)}=0.
\]
\end{remark}

By Karamata's theorem (e.g., page 28 in Bingham, Goldie and Teugels~\cite{Bingham:1989}) and the Assumption~A1, we know that
$\int_x^{\infty}(1-B(t))dt \sim (d_B-1)^{-1} x^{-d_B+1} L(x)$ as $x\to\infty$, which implies $1-B^{(e)}(x)\sim ((d_B-1)\beta_1)^{-1}  x^{-d_B+1} L(x)$ as $x\to\infty$.
By the definitions of $N_{B}$ and $N_{B^{(e)}}$ in Fact A and Lemma~\ref{Lemma 3.1}, we have
\begin{eqnarray}
P\{N_{B}>j\}&\sim& 1-B(j/\lambda)\ \sim\ \lambda^{d_B} j^{-d_B}L(j), \label{NB>j}\\
P\{N_{B^{(e)}}>j\}&\sim& 1-B^{(e)}(j/\lambda)\ \sim\ \frac {\lambda^{d_B-1}} {(d_B-1)\beta_1} j^{-d_B+1}L(j).\label{B(e)>j}
\end{eqnarray}

Next, let us state a result on tail asymptotics for $K$, which will be used in later sections.
\begin{theorem}\label{Theorem 3.1}
Under Assumptions~A1 and A2,
\begin{equation}
    P\{K>j\} \sim c_{K}\cdot j^{-a+1}L(j),\quad \mbox{as }j\to\infty,\label{Theorem3-1a}
\end{equation}
where $a=\min(d_B, d_X)>1$ and
\begin{equation}\label{c-K}
c_{K}=\left\{
\begin{array}{ll}
(\lambda\chi_1)^{a}\chi_1/((a-1)(1-\rho)(\rho+\chi_1-1)),&\quad \mbox{if}\ d_X>d_B>1,\\
c_X/((a-1)(1-\rho)(\rho+\chi_1-1)),&\quad \mbox{if}\ 1<d_X<d_B \mbox{ and}\ c_X>0, \\
((\lambda\chi_1)^{a}\chi_1+c_X)/((a-1)(1-\rho)(\rho+\chi_1-1)),&\quad \mbox{if}\ d_X=d_B>1 \mbox{ and}\ c_X>0.
\end{array}
\right.
\end{equation}
\end{theorem}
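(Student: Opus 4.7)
The plan is to exploit the decomposition $K = K^{\ast} + K^{\circ}$ established in Section~\ref{decomposition}, derive the tail asymptotics of $K^{\ast}$ and $K^{\circ}$ separately, and then combine them using the fact that the sum of two independent nonnegative rvs with regularly varying tails of the same index has a tail asymptotically equal to the sum of the individual tails. In every step the engine is the standard machinery for sums, random sums, and geometric sums of regularly varying (hence subexponential) rvs that is collected in the appendix.

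For $K^{\ast}$, since $K^{\ast}\overset{\mathrm{d}}{=} N_{B_X X_0}^{(de)}$, I would first compute the tail of $N_{B_X X_0} = N_{B_X} + X_0$. For the random sum $N_{B_X} = X^{(1)} + \cdots + X^{(N_B)}$, the classical compound-tail theorem applied to $P\{N_B > j\} \sim \lambda^{d_B} j^{-d_B} L(j)$ from (\ref{NB>j}) and to $P\{X > j\} \sim c_X j^{-d_X} L(j)$ from A2 yields an index-$(-a)$ regularly varying tail whose coefficient is $(\lambda\chi_1)^{d_B}$ when $d_B < d_X$, is $\lambda\beta_1 c_X$ when $d_X < d_B$, and is the sum of these two in the boundary case $d_X = d_B$. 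Adding the independent $X_0$ contributes the extra term $c_X j^{-d_X} L(j)$ (relevant only when $d_X \le d_B$). Using $E(N_{B_X X_0}) = \rho + \chi_1 - 1$ from (\ref{N_{B_XX_0}}) together with Karamata's theorem to pass from the tail of $N_{B_X X_0}$ to that of its discrete equilibrium $K^{\ast}$ then yields $P\{K^{\ast} > j\} \sim C^{\ast} j^{-a+1} L(j)/((a-1)(\rho + \chi_1 - 1))$ with an explicit $C^{\ast}$ in each of the three regimes.

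For $K^{\circ}$, Karamata applied to A1 gives $1 - B^{(e)}(x) \sim x^{-d_B+1} L(x)/((d_B-1)\beta_1)$, so $N_{B^{(e)}}$ has tail of index $-(d_B - 1)$; a second application of the compound-tail theorem to $N_{B^{(e)}_X} = \sum_{i=1}^{N_{B^{(e)}}} X^{(i)}$ then produces a tail of index $-\min(d_B - 1, d_X)$. A direct Karamata computation on A2 yields $P\{X^{(de)} > j\} \sim c_X j^{-d_X+1} L(j)/((d_X-1)\chi_1)$, i.e.\ index $-(d_X-1)$. Adding the independent summands $N_{B^{(e)}_X}$ and $X^{(de)}$ then gives a regularly varying tail of index $-(a-1)$. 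Because the summands in (\ref{K{circ}}) are regularly varying and hence subexponential, and $J$ is geometric on $\{0,1,\ldots\}$ with $E(J)=\rho/(1-\rho)$, the standard geometric-sum asymptotic gives $P\{K^{\circ} > j\} \sim (\rho/(1-\rho))\,P\{N_{B^{(e)}_X X^{(de)}} > j\}$.

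Finally, both $P\{K^{\ast} > j\}$ and $P\{K^{\circ} > j\}$ are regularly varying of the common index $-(a-1)$, so by independence $P\{K > j\} \sim P\{K^{\ast} > j\} + P\{K^{\circ} > j\}$. Substituting the identity $\rho/\beta_1 = \lambda\chi_1$ and the algebraic identity
\[
    \frac{1}{\rho + \chi_1 - 1} + \frac{1}{1-\rho} = \frac{\chi_1}{(\rho + \chi_1 - 1)(1-\rho)}
\]
collapses each of the three regimes to precisely the coefficient $c_K$ displayed in (\ref{c-K}). The main obstacle, in my view, is less conceptual than bookkeeping: one must keep track of the sub-cases $d_X < d_B - 1$ and $d_B - 1 \le d_X < d_B$ within regime 2 (both collapse to the same expression because $X^{(de)}$ dominates $N_{B^{(e)}_X}$ whenever $d_X < d_B$) and check that the appendix's random-sum and geometric-sum asymptotic formulas apply uniformly in each regime with no moment assumptions beyond those already implicit in A1--A2.
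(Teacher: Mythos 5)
Your proposal is correct and follows essentially the same route as the paper: the decomposition $K=K^{\ast}+K^{\circ}$, the compound-tail theorem (Lemma~\ref{Lemma 3.2}) applied to $N_{B_X}$ and $N_{B^{(e)}_X}$, the discrete Karamata lemma (Lemma~\ref{Lemma 3.3}) to pass to the equilibrium tails, the geometric-sum asymptotic for $K^{\circ}$, and the subexponential sum rule (Lemma~\ref{Lemma 3.4new}) to combine the two independent components. The only caveat is that the displayed identity $1/(\rho+\chi_1-1)+1/(1-\rho)=\chi_1/((\rho+\chi_1-1)(1-\rho))$ closes only the regime $d_X>d_B$; the regimes with $d_X\le d_B$ require the analogous (and equally routine) simplification $(1+\lambda\beta_1)(1-\rho)+\lambda\beta_1(\rho+\chi_1-1)=1$.
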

Based on whether or not the batch size $X$ has a tail lighter than the service time $B$, we divided our proof to Theorem~\ref{Theorem 3.1} into the following three cases.

\subsection{Case 1: $d_X>d_B>1$ in Assumptions~A1 and A2}

This is the case, in which the batch size $X$ has a tail lighter than the service time $B$. It is worthwhile to mention that in this case
$X$ is not necessarily light-tailed (see, e.g., Grandell \cite{Grandell1997}, p.146).

\begin{lemma}\label{Lemma 3.5}
If $d_X>d_B>1$ in Assumptions~A1 and A2, then as $j\to\infty$,
\begin{eqnarray}
P\{X>j\}&=&o(j^{-d_B}L(j)), \label{Xo-1}\\
P\{X_0>j\}&=&o(j^{-d_B}L(j)), \label{Xo-2}\\
P\{X^{(de)}>j\}&=&o(j^{-d_B+1}L(j)).\label{Xo-3}
\end{eqnarray}
\end{lemma}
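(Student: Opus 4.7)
The plan is to observe that all three statements follow from a single mechanism: since $d_X>d_B$, the ratio $j^{-d_X}/j^{-d_B}=j^{-(d_X-d_B)}$ vanishes as $j\to\infty$, so any tail bound at rate $j^{-d_X}L(j)$ is automatically $o(j^{-d_B}L(j))$ (and analogously with exponents shifted by one). Consequently, the three items differ only in how one converts the tail of $X$ into the tail of the derived variable. Throughout, the slow variation of $L$ will be used silently through $L(j+1)/L(j)\to 1$ and through Potter-type bounds when we shift arguments.

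For (\ref{Xo-1}), I would invoke Assumption~A2 directly. When $c_X>0$, the tail equivalence gives $P\{X>j\}/(j^{-d_B}L(j))\sim c_X\,j^{-(d_X-d_B)}\to 0$; when $c_X=0$, the convention in Remark~\ref{Remark 3.2a} reads $P\{X>j\}/(j^{-d_X}L(j))\to 0$, and combining this with $j^{-(d_X-d_B)}\to 0$ yields the same conclusion. For (\ref{Xo-2}), I would use $X_0=X-1$, whence $P\{X_0>j\}=P\{X>j+1\}$, and the slow variation of $L$ reduces the claim to (\ref{Xo-1}).

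The only item requiring computation is (\ref{Xo-3}). I would start from the definition of the discrete equilibrium distribution in Fact~C, written for $X$ (with $E(X)=\chi_1$), so that $P\{X^{(de)}>j\}=\chi_1^{-1}\sum_{n=j+1}^{\infty}P\{X>n\}$. Since $d_X>1$, Karamata's theorem for sums of regularly varying sequences (cited from the appendix) yields
\[
\sum_{n=j+1}^{\infty}P\{X>n\}\sim \frac{c_X}{d_X-1}\,j^{-d_X+1}L(j)
\]
when $c_X>0$, and the corresponding $o$-version when $c_X=0$. Dividing by $j^{-d_B+1}L(j)$ leaves the factor $j^{-(d_X-d_B)}\to 0$, giving (\ref{Xo-3}). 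I anticipate no substantive obstacle: the entire lemma is a bookkeeping exercise verifying that the strict inequality $d_X>d_B$ survives the equilibrium transform and the shift $X\mapsto X_0$, with the only minor care being the handling of the $c_X=0$ case, where the ``$\sim$'' in Karamata's statement must be replaced by the $o$-counterpart.
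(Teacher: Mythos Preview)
Your proof is correct and follows essentially the same approach as the paper's: both reduce (\ref{Xo-3}) to summing the tail $P\{X>n\}$ via Lemma~\ref{Lemma 3.3} and then exploit $d_X>d_B$. The only cosmetic difference is that the paper avoids your case split on $c_X$ by first passing to a uniform upper bound $P\{X>j\}\le c'_X\, j^{-d_X}L(j)$ (valid whether $c_X>0$ or $c_X=0$) and applying Lemma~\ref{Lemma 3.3} to the bounding sequence $k^{-d_X}L(k)$ directly.
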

\begin{proof} Because of $d_X>d_B$, (\ref{Xo-1}) and (\ref{Xo-2}) directly follow from Assumptions~A1 and A2.
We now prove (\ref{Xo-3}). By Assumption~A2, $P\{X>j\}\le c'_X j^{-d_X}L(j)$ for some $c'_X>0$. Since
$P\{X^{(de)}=j\}=P\{X>j\}/\chi_1$ (by the definition of the equilibrium distribution),
\begin{eqnarray}
P\{X^{(de)}>j\}
\le (c'_X/\chi_1)\sum_{k=j+1}^{\infty} k^{-d_X}L(k)
\ \sim\ \frac {c'_X/\chi_1} {d_X-1} j^{-d_X+1}L(j)\quad \mbox{(by Lemma \ref{Lemma 3.3})}
\end{eqnarray}
which leads to (\ref{Xo-3}) due to $d_X>d_B$.
\end{proof}

By (\ref{NB>j}), (\ref{B(e)>j}), (\ref{Xo-1}) and (\ref{Xo-3}), we immediately have $P\{X>j\}=o(P\{N_{B}>j\})$ and $P\{X^{(de)}>j\}=o(P\{N_{B^{(e)}}>j\})$. By the definitions of $N_{B_X}$ and $N_{B^{(e)}_X}$ in Facts A and D, and applying  Lemma \ref{Lemma 3.2}, we have
\begin{eqnarray}
P\{N_{B_X}>j\}&\sim& (\lambda\chi_1)^{d_B} j^{-d_B}L(j), \label{NBX>n}\\
P\{N_{B^{(e)}_X}>j\}&\sim&\frac {(\lambda\chi_1)^{d_B-1}} {(d_B-1)\beta_1} j^{-d_B+1}L(j).\label{NBeX>n}
\end{eqnarray}

By the definitions in Facts B and D, $N_{B_XX_0}=N_{B_X}+X_0$ and $N_{B^{(e)}_XX^{(de)}}=N_{B^{(e)}_X}+X^{(de)}$, and (\ref{NBX>n}) and (\ref{NBeX>n}) lead to $P\{X_0>j\}=o(P\{N_{B_X}>j\})$ and $P\{X^{(de)}>j\}=o(P\{N_{B^{(e)}_X}>j\})$ due to $d_X>d_B$.
Applying Part (i) of Lemma \ref{Lemma 3.4new}, we have
\begin{eqnarray}
P\{N_{B_XX_0}>j\}&\sim& P\{N_{B_X}>j\}\ \sim\ (\lambda\chi_1)^{d_B} j^{-d_B}L(j), \label{NBXX0-NBX}\\
P\{N_{B^{(e)}_XX^{(de)}}>j\}&\sim& P\{N_{B^{(e)}_X}>j\}\ \sim\  \frac {(\lambda\chi_1)^{d_B-1}} {(d_B-1)\beta_1} j^{-d_B+1}L(j).\label{NBeXXe-NBeX}
\end{eqnarray}

Now we are ready to present the asymptotic property for the tail probability of $K$.
By (\ref{K{circ}}) and (\ref{NBeXXe-NBeX}), and applying Corollary A3.21 in Embrechts, et al. (\cite{Embrechts1997}, p. 581),
\begin{eqnarray}
P\{K^{\circ}>j\}=\frac {\rho} {1-\rho}P\{N_{B^{(e)}_XX^{(de)}}>j\}&\sim& \frac {(\lambda\chi_1)^{d_B}} {(d_B-1)(1-\rho)} j^{-d_B+1}L(j), \label{K^{(b)}>n}
\end{eqnarray}
where in the first equality we have used the fact that $\rho/(1-\rho)$ is the mean of rv $J$ in (\ref{K{circ}}).
By Facts B and C, and (\ref{NBXX0-NBX}),
\[
P\{K^{\ast}=j\}=P\{N_{B_XX_0}^{(de)}=j\}=\frac {P\{N_{B_XX_0}>j\}} {E(N_{B_XX_0})}\sim
\frac {(\lambda\chi_1)^{d_B} }{\rho+\chi_1-1} j^{-d_B}L(j). %\label{K^{(a)}>n}
\]
Applying Lemma \ref{Lemma 3.3} gives
\begin{eqnarray}
P\{K^{\ast}>j\}&\sim& \frac {(\lambda\chi_1)^{d_B}} {(d_B-1)(\rho+\chi_1-1)} j^{-d_B+1}L(j).\label{K^{(a)}>n}
\end{eqnarray}
By (\ref{KK^{(a)}K^{(b)}}), (\ref{K^{(a)}>n}) and (\ref{K^{(b)}>n}) and using Part (ii) of Lemma \ref{Lemma 3.4new}, we have
\begin{eqnarray}
P\{K>j\}&\sim& \frac {(\lambda\chi_1)^{d_B}\chi_1} {(d_B-1)(1-\rho)(\rho+\chi_1-1)}\cdot j^{-d_B+1}L(j),\label{P{K>j}}
\end{eqnarray}
which is the conclusion in Theorem~\ref{Theorem 3.1} for Case~1.

\subsection{Case 2: $1<d_X<d_B$ and $c_X> 0$ in Assumptions~A1 and A2}

This is the case, in which the batch size $X$ has a tail heavier than the service time $B$.
By the definitions of $N_{B}$, $N_{B_X}$ and $N_{B_XX_0}$ in Facts A and B, and applying Lemma \ref{Lemma 3.2} and Part (ii) of Lemma \ref{Lemma 3.4new}, we have
\begin{eqnarray}
P\{N_{B_X}>j\}&\sim& \lambda\beta_1\cdot c_X j^{-d_X}L(j), \label{NBX>n-case2}\\
P\{N_{B_XX_0}>j\}&\sim& (1+\lambda\beta_1)\cdot c_X j^{-d_X}L(j), \label{NBXX0-NBX-case2}
\end{eqnarray}
where we have used the facts $E(N_B)=\lambda\beta_1$ and $P\{X_0>j\}\sim P\{X>j\}$.

By (\ref{B(e)>j}) and the definition of $N_{B^{(e)}_X}$ in Fact D, and applying Lemma \ref{Lemma 3.2},
\begin{eqnarray}
P\{N_{B^{(e)}_X}>j\}&\le& c''_X \max\left(j^{-d_B+1}L(j),\ j^{-d_X}L(j)\right)\quad\mbox{for some\ }c''_X>0.\label{NBeX>n-case2}
\end{eqnarray}
By Lemma \ref{Lemma 3.3}, we have $P\{X^{(de)}>j\}\sim (\chi_1(d_X-1))^{-1} c_X j^{-d_X+1}L(j)$, which implies $P\{N_{B^{(e)}_X}>j\}=o(P\{X^{(de)}>j\})$.
By the definition $N_{B^{(e)}_XX^{(de)}}$ in Fact D, and applying Part (i) of Lemma \ref{Lemma 3.4new}, we get
\begin{eqnarray}
P\{N_{B^{(e)}_XX^{(de)}}>j\}\sim P\{X^{(de)}>j\}\sim \frac {c_X} {\chi_1(d_X-1)} j^{-d_X+1}L(j).\label{NBeXXe-NBeX-case2}
\end{eqnarray}

Now we are ready to present the asymptotic property for the tail probability of $K$.
By (\ref{K{circ}}) and (\ref{NBeXXe-NBeX-case2}), and applying Corollary A3.21 in Embrechts, et al. (\cite{Embrechts1997}, p. 581),
\begin{eqnarray}
P\{K^{\circ}>j\}=\frac {\rho} {1-\rho}P\{N_{B^{(e)}_XX^{(de)}}>j\}&\sim& \frac {\lambda\beta_1 c_X} {(1-\rho)(d_X-1)} j^{-d_X+1}L(j).\label{K^{(b)}>n-case2}
\end{eqnarray}
By Facts B and C, and (\ref{NBXX0-NBX-case2}),
\[
P\{K^{\ast}=j\}=P\{N_{B_XX_0}^{(de)}=j\}=\frac {P\{N_{B_XX_0}>j\}} {E(N_{B_XX_0})}\sim \frac {(1+\lambda\beta_1)c_X} {\rho+\chi_1-1} j^{-d_X}L(j). %\label{K^{(a)}>n-case2}
\]
Applying Lemma \ref{Lemma 3.3},
\begin{eqnarray}
P\{K^{\ast}>j\}\sim \frac {(1+\lambda\beta_1)c_X} {(d_X-1)(\rho+\chi_1-1)}j^{-d_X+1}L(j).\label{K^{(a)}>n-case2}
\end{eqnarray}
By (\ref{KK^{(a)}K^{(b)}}), (\ref{K^{(a)}>n-case2})--(\ref{K^{(b)}>n-case2}) and using Part (ii) of Lemma~\ref{Lemma 3.4new},
\begin{eqnarray}
P\{K>j\}&\sim& \frac {c_X} {(d_X-1)(1-\rho)(\rho+\chi_1-1)}\cdot j^{-d_X+1}L(j), \label{P{K>j}-case2}
\end{eqnarray}
which is the conclusion in Theorem~\ref{Theorem 3.1} for Case~2.

\subsection{Case 3: $d_X=d_B=a>1$ and $c_X> 0$ in Assumptions~A1 and A2}

This is the case, in which the batch size $X$ has a tail equivalent to the service time $B$. Following the same procedure in Cases 1 and 2, we can prove that
\begin{eqnarray}
P\{N_{B_X}>j\}&\sim& ((\lambda\chi_1)^{a}+\lambda\beta_1c_X )\cdot  j^{-a}L(j), \label{NBX>n-case3}\\
P\{N_{B^{(e)}_XX^{(de)}}>j\}&\sim& \frac {(\lambda\chi_1)^{a}+\lambda\beta_1c_X} {(a-1)\rho}\cdot j^{-a+1}L(j), \label{N^{e}BXX^{de}>n-case3}\\
P\{K^{\circ}>j\}&\sim& \frac {(\lambda\chi_1)^a+\lambda\beta_1 c_X} {(a-1)(1-\rho)}\cdot  j^{-a+1}L(j),\label{K^{(b)}>n-case3}\\
P\{K^{*}>j\}&\sim& \frac {(\lambda\chi_1)^a+(1+\lambda\beta_1) c_X} {(a-1)(\rho+\chi_1-1)}\cdot  j^{-a+1}L(j),\label{K^{*}>n-case3}\\
P\{K>j\}&\sim& \frac {(\lambda\chi_1)^{a}\chi_1+c_X} {(a-1)(1-\rho)(\rho+\chi_1-1)}\cdot j^{-a+1}L(j),
\end{eqnarray}
where we have skipped the detailed derivations to avoid the repetition.

\section{Key result -- asymptotic tail probability for the rv $D^{(0)}$}
\label{sec:5}

Note that $D^{(0)}(z)$ is explicitly expressed by $K(z)$ in (\ref{D{(0)}(z)-2}), based on which we are able to study the asymptotic property for the tail probability of $D^{(0)}$ using the result on $K$ in Theorem~\ref{Theorem 3.1}. This is  the key result of this paper since the refined asymptotic properties in the main theorem (Theorem~\ref{Theorem refined}) and the asymptotic property of $D^{(1)}$ in Theorem~\ref{Theorem 5.1}, can be readily proved by using the following Theorem~\ref{Theorem 3.2}

\begin{theorem}[Key result] \label{Theorem 3.2}
Under Assumptions~A1 and A2,
\begin{eqnarray}
P\{D^{(0)}>j\}\sim (1-1/a) c_K\psi\cdot j^{-a}L(j)=c_{D^{(0)}}\cdot j^{-a}L(j),\quad\mbox{as}\ j\to\infty,\label{main-1}
\end{eqnarray}
where $a=\min(d_B, d_X)>1$,
\begin{eqnarray}\label{main-2}
c_{D^{(0)}}&=&\left\{
\begin{array}{ll}
(\lambda\chi_1)^{a+1}/(a\mu(1-\rho)^2), &\quad \mbox{if}\ d_X>d_B>1,\\
\lambda c_X/(a\mu(1-\rho)^2), &\quad \mbox{if}\ 1<d_X<d_B \mbox{ and}\ c_X>0, \\
((\lambda\chi_1)^{a+1}+\lambda c_X)/(a\mu(1-\rho)^2), &\quad \mbox{if}\ d_X=d_B>1 \mbox{ and}\ c_X>0,
\end{array}
\right.
\end{eqnarray}
and $\psi$ and $c_K$ are expressed in (\ref{psi}) and (\ref{c-K}), respectively.
\end{theorem}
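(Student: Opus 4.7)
The plan is to recognize that $D^{(0)}$ has a compound Poisson structure whose summand inherits a regularly varying tail from $K$, and then apply the classical subexponential compound-Poisson tail asymptotic. Writing $K(u)=\sum_{k\geq 0} p_k u^k$ with $p_k=P\{K=k\}$, term-by-term integration in (\ref{D{(0)}(z)-2}) gives
\[
\psi\int_z^1 K(u)\,du\;=\;\psi\sum_{k\geq 0}\frac{p_k}{k+1}\bigl(1-z^{k+1}\bigr)\;=\;\lambda^{\ast}\bigl(1-G(z)\bigr),
\]
where $\lambda^{\ast}:=\psi\,E(1/(K+1))$ and $G(z):=(E(1/(K+1)))^{-1}\sum_{k\geq 0}\frac{p_k}{k+1}z^{k+1}$. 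Since $G(z)$ has nonnegative coefficients summing to $1$, it is the GF of a rv $G$ supported on $\{1,2,\ldots\}$. Hence $D^{(0)}(z)=\exp\{-\lambda^{\ast}(1-G(z))\}$, i.e., $D^{(0)}\overset{\footnotesize\mbox{d}}{=}G_1+\cdots+G_N$ with $N\sim\mathrm{Poisson}(\lambda^{\ast})$ and $G_1,G_2,\ldots$ i.i.d.\ copies of $G$, independent of $N$.

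Next, I would compute the tail of $G$. Using $p_k=\overline{K}(k-1)-\overline{K}(k)$ (with the convention $\overline{K}(-1)=1$) together with an Abel summation,
\[
\sum_{k\geq n}\frac{p_k}{k+1}\;=\;\frac{\overline{K}(n-1)}{n+1}\;-\;\sum_{k\geq n}\frac{\overline{K}(k)}{(k+1)(k+2)}.
\]
Plugging in $\overline{K}(n)\sim c_K\,n^{-(a-1)}L(n)$ from Theorem~\ref{Theorem 3.1}, the first term on the right is $\sim c_K\,n^{-a}L(n)$, while Karamata's theorem applied to the second gives $\sim(c_K/a)\,n^{-a}L(n)$. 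Subtracting,
\[
P\{G>n\}\;=\;\frac{1}{E(1/(K+1))}\sum_{k\geq n}\frac{p_k}{k+1}\;\sim\;\frac{c_K(1-1/a)}{E(1/(K+1))}\,n^{-a}L(n),
\]
so the distribution of $G$ is regularly varying of index $-a$ and therefore subexponential.

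The classical compound-Poisson tail asymptotic (e.g., Embrechts et al.~\cite{Embrechts1997}, Corollary A3.21) then yields
\[
P\{D^{(0)}>n\}\;\sim\;\lambda^{\ast}\,P\{G>n\}\;=\;(1-1/a)\,c_K\,\psi\,n^{-a}L(n),
\]
which is (\ref{main-1}). The three expressions for $c_{D^{(0)}}$ in (\ref{main-2}) come out by substituting the three case values of $c_K$ from (\ref{c-K}) and $\psi=\lambda(\rho+\chi_1-1)/(\mu(1-\rho))$ into $(1-1/a)\,c_K\,\psi$; in each case the factors $(a-1)$ and $(\rho+\chi_1-1)$ cancel cleanly to produce the stated constants. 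The only delicate step is the Abel/Karamata computation of $P\{G>n\}$: the two contributions $c_K\,n^{-a}L(n)$ and $(c_K/a)\,n^{-a}L(n)$ are of the same order and enter with opposite signs, so a bookkeeping slip would directly change the constant $(1-1/a)c_K\psi$; once $P\{G>n\}$ is pinned down, the passage from $G$ to $D^{(0)}$ through subexponential compound-Poisson theory is routine.
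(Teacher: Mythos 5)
Your proposal is correct, and it takes a genuinely different route from the paper. The paper works analytically with the generating function: it expands the exponential in (\ref{D{(0)}(z)new}), isolates the regularly varying remainder of $\int_z^1K(u)\,du$ through the $G_n(z)$, $\widehat{G}_n(z)$ apparatus, and then must split into two cases --- Karamata's Tauberian theorem (Lemma~\ref{Lemma 4.3}) for non-integer $a$, and a separate de Haan class $\Pi$ argument (Lemmas~\ref{Lemma 4.4new} and~\ref{Lemma 4.5new}) for integer $a$, the latter being needed because finiteness of $\kappa_{m-1}$ is then undetermined. You instead observe that the exponent is \emph{exactly} of compound-Poisson form, $D^{(0)}(z)=\exp\{-\lambda^{\ast}(1-G(z))\}$ with $\lambda^{\ast}=\psi E(1/(K+1))$, reduce the problem to the tail of the summand $G$, and obtain that tail by Abel summation plus the discrete Karamata theorem (Lemma~\ref{Lemma 3.3}); the passage to $D^{(0)}$ is then the same subexponential random-sum result (Corollary A3.21 of \cite{Embrechts1997}) that the paper already invokes for the geometric compound $K^{\circ}$. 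Your verification steps all check out: the Abel identity is right, the two contributions $c_K n^{-a}L(n)$ and $(c_K/a)n^{-a}L(n)$ enter with limiting coefficients $c_K$ and $c_K/a$ that do not cancel (since $a>1$), so the subtraction is rigorous and yields $P\{G>n\}\sim c_K(1-1/a)\,n^{-a}L(n)/E(1/(K+1))$; the factor $E(1/(K+1))$ cancels against $\lambda^{\ast}$, and the three constants in (\ref{main-2}) follow from (\ref{c-K}) and (\ref{psi}) exactly as you say. What your approach buys is a single uniform argument for all $a>1$ (no integer/non-integer dichotomy, no de Haan machinery) and a probabilistic interpretation of $D^{(0)}$ as a Poisson compound of a size-biased-type transform of $K$; what the paper's approach buys is a template that does not depend on the exponent having exactly the $\lambda^{\ast}(1-G(z))$ form and that exposes the Tauberian structure explicitly.
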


Once again, we put some literature results required in the proof to our main theorem, together with some preliminary properties, in Appendix~\ref{app-A}.

In the following, we divide the proof to Theorem~\ref{Theorem 3.2} into two parts, depending on whether $a$ is an integer or not.
First let us rewrite (\ref{D{(0)}(z)-2}) as follows:
\begin{equation}
    D^{(0)}(z) = 1 -\psi \int_z^1 K(u)du +\sum_{k=2}^{\infty}{(-\psi)^k \over k!}\left(\int_z^1 K(u)du\right)^k. \label{D{(0)}(z)new}
\end{equation}
As shown in Facts~A--D, $K(z)$ is the GF of the rv $K$ with the discrete probability distribution $k(j)\stackrel{\rm def}{=}P\{K=j\}$, $j\ge 0$. In the proof, we use
the notation $\kappa_n$ to represent the $n$th factorial moment (see Appendix~\ref{app-A} for the definition) of $K$.

\subsection{Proof for the non-integer $a>1$}

Suppose $m<a<m+1$, $m\in\{1,2,\cdots\}$. By Theorem \ref{Theorem 3.1}, $P\{K>j\}\sim c_K\cdot j^{-a+1}L(j)$.
So $\kappa_{m-1}<\infty$ and $\kappa_{m}=\infty$.

Define $K_{m-1}(z)$ in a manner similar to that in (\ref{G_n(z)}). Corresponding to the sequence $\{k(j)\}_{j=0}^{\infty}$, we also define $\overline{k}_n(j)$, $n\in\{0,1,\cdots m-1\}$ in a way similar to that in (\ref{overline{g}_{0}(j)}) and (\ref{overline{g}_{n+1}(j)}). Note that $\overline{k}_1(j)=P\{K>j\}\sim c_K\cdot j^{-a+1}L(j)$. By Lemma~\ref{Lemma 4.3},
\begin{eqnarray}
K_{m-1}(z)&\sim&\frac{\Gamma(a-m)\Gamma(m+1-a)} {\Gamma(a-1)} c_K(1-z)^{a-1}L(1/(1-z)),\quad z\uparrow 1.\label{(a>1)4.2-10}
\end{eqnarray}
By Karamata's theorem (Bingham, Goldie and Teugels~\cite{Bingham:1989}, p.28),
\begin{eqnarray}
\int_z^{1}K_{m-1}(u)du &\sim& \frac{\Gamma(a-m)\Gamma(m+1-a)} {\Gamma(a-1)a} c_K(1-z)^{a}L(1/(1-z)),\quad z\uparrow 1.\label{(a>1)4.2-11}
\end{eqnarray}
Next, we present a relation between $D^{(0)}_m(z)$ and $K_{m-1}(z)$. By the definition of $K_{m-1}(z)$,
\begin{eqnarray}
K(z)&=&\sum_{k=0}^{m-1}(-1)^k\frac{\kappa_k}{k!}(1-z)^k+(-1)^{m}K_{m-1}(z),\label{5-1K12}\\
\int_z^1K(u)du&=&-\sum_{k=1}^{m}(-1)^k\frac{\kappa_{k-1}}{k!}(1-z)^k+(-1)^{m}\int_z^{1}K_{m-1}(u)du.\label{(a>1)4.2-12}
\end{eqnarray}
Note that $\int_z^{1}K_{m-1}(u)du/(1-z)^{m}\to 0$ and $\int_z^{1}K_{m-1}(u)du/(1-z)^{m+1}\to\infty$ as $z\uparrow 1$.

From (\ref{D{(0)}(z)new}) and (\ref{(a>1)4.2-12}), there are constants $\{v_k;\ k=0,1,2,\cdots,m\}$ satisfying
\begin{eqnarray}
D^{(0)}(z)&=&\sum_{k=0}^{m}(-1)^k v_k (1-z)^k+(-1)^{m+1}\psi\int_z^{1}K_{m-1}(u)du + O((1-z)^{m+1}),\quad z\uparrow 1. \label{(a>1)4.2-14}
\end{eqnarray}
Define $D^{(0)}_m(z)$ in a manner similar to that in (\ref{G_n(z)}). By (\ref{(a>1)4.2-14}),
\begin{eqnarray}
D^{(0)}_m(z)&=&\psi\int_z^{1}K_{m-1}(u)du + O((1-z)^{m+1})\nonumber\\
&\sim&\psi\int_z^{1}K_{m-1}(u)du,\quad z\uparrow 1.\label{(a>1)4.2-15}
\end{eqnarray}
By (\ref{(a>1)4.2-11}) and (\ref{(a>1)4.2-15}),
\begin{eqnarray}
D^{(0)}_m(z)&\sim& \frac{\Gamma(a-m)\Gamma(m+1-a)} {\Gamma(a)}\cdot\frac {(a-1)c_K\psi}{a}(1-z)^{a}L(1/(1-z)),\quad z\uparrow 1.\label{(a>1)4.2-16}
\end{eqnarray}
By applying Lemma \ref{Lemma 4.3},
\begin{eqnarray}
P\{D^{(0)}>j\}&\sim& \frac {(a-1)c_K\psi}{a} j^{-a}L(j),\quad j\to\infty,\label{(a>1)4.2-17}
\end{eqnarray}
which completes the proof of Theorem~\ref{Theorem 3.2} for non-integer $a>1$.

\subsection{Proof for the integer $a>1$}

Suppose $a=m\in\{2,3,\cdots\}$.  By Theorem~\ref{Theorem 3.1}, $P\{K>j\}\sim c_K\cdot j^{-m+1}L(j)$.
So, $\kappa_{m-2}<\infty$. Unfortunately, whether $\kappa_{m-1}$ is finite or not remains uncertain, which is determined essentially by whether $\sum_{k=1}^{\infty}k^{-1}L(k)$ is convergent or not. For this reason we have to sharpen our analytical tool by
introducing the de Haan class $\Pi$ of slowly varying functions (see Definition~\ref{II}).

\begin{lemma}\label{Lemma 4.4new}
Suppose that
$\{q(j)\}_{j=0}^{\infty}$ is a nonnegative sequence with the GF $Q(z)$. The following two statements are equivalent:
\begin{flalign}
\begin{split}
\mbox{(i)}&\quad q(j)\sim j^{-1}L(j),\quad j\to\infty; \mbox{ and } \label{Lemma4-4a}
\end{split}&\\
\begin{split}
\mbox{(ii)}&\quad Q(1-u)\in \Pi \mbox{ at $0$ with an auxiliary function which can be taken as $L(1/u)$}. \label{Lemma4-4b}
\end{split}&
\end{flalign}
\end{lemma}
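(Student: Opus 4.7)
The plan is to recognize Lemma~\ref{Lemma 4.4new} as a Tauberian/Abelian equivalence for the de Haan $\Pi$ class and to reduce it, through the partial sums $U(n) := \sum_{k=0}^{n} q(k)$, to the Karamata–de Haan Tauberian theorem (Bingham, Goldie and Teugels~\cite{Bingham:1989}, Section~3.9). The chain of equivalences I aim for is
\[
 q(j) \sim j^{-1}L(j) \; \iff \; U(n) \sim L(n)\log n \; \iff \; Q(1-u)\in \Pi \text{ at } 0 \text{ with auxiliary function } L(1/u).
\]

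First I would establish the leftmost equivalence. The implication ``$q(j)\sim j^{-1}L(j)\Rightarrow U(n)\sim L(n)\log n$'' is a standard application of Karamata's theorem (\cite{Bingham:1989}, p.~28), obtained by splitting the partial sum at $\varepsilon n$ and exploiting slow variation of $L$. The converse is the monotone-density theorem: since $q(j)\ge 0$, $U$ is nondecreasing, and the relation $U(n)\sim L(n)\log n$ then forces $q(j)=U(j)-U(j-1)\sim j^{-1}L(j)$.

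Next, the rightmost equivalence is the heart of the argument, and would be proved via the Karamata–de Haan Tauberian theorem (\cite{Bingham:1989}, Theorem~3.9.1). For the Abelian direction I would compute, for $\lambda>0$,
\[
 Q(1-\lambda u)-Q(1-u)=\sum_{j\ge 1}q(j)\bigl[(1-\lambda u)^j-(1-u)^j\bigr],
\]
approximate $(1-tu)^j\approx e^{-tuj}$ in the regime $uj=O(1)$, and use the slow variation of $L$ together with $\int_{u}^{\lambda u}y^{-1}e^{-y}\,dy\to \log\lambda$ to conclude that $Q(1-u)-Q(1-\lambda u)\sim -L(1/u)\log\lambda$, which is exactly the $\Pi$-class defining relation at $0$. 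The converse (Tauberian) direction is delivered by BGT's theorem once one observes that the positivity $q(j)\ge 0$ provides the required monotonicity of $U$; the cosmetic step of replacing $e^{-s}$ by $1-s$ is absorbed by the slow variation of the auxiliary function.

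The main obstacle is handling the non-integrability of the continuous analogue $x^{-1}L(x)e^{-ux}$ near $x=0$ in the Abelian computation above. I would deal with this by a careful three-way split of the summation range — $j\le \delta/u$, $\delta/u<j\le M/u$, and $j>M/u$ — and by applying Potter's bounds (\cite{Bingham:1989}, Theorem~1.5.6) on each piece to obtain uniform estimates. Since the lemma is essentially a specialisation of the classical Karamata–de Haan Tauberian theorem to probability generating functions, the cleanest write-up is to verify the reduction $q(j)\sim j^{-1}L(j)\Leftrightarrow U\in\Pi$ carefully and then invoke \cite{Bingham:1989}, Theorem~3.9.1, rather than reprove the Tauberian step in full.
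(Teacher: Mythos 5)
Your overall strategy --- reduce to the de Haan/Karamata Tauberian theorem (Theorem 3.9.1 of \cite{Bingham:1989}) via the partial sums $U(n)=\sum_{k\le n}q(k)$ --- is a legitimate textbook route and genuinely different from the paper's, which instead works with $r(j)=\sum_{k\le j}kq(k)$, whose GF is $zQ'(z)/(1-z)$, applies the ordinary Karamata Tauberian theorem for power series to get $R(1-u)\sim u^{-2}L(1/u)$, and then transfers this to $Q(1-xu)-Q(1-u)$ through an explicit integral representation and the uniform convergence theorem. The problem is that the first link in your chain is false as stated. The relation $q(j)\sim j^{-1}L(j)$ does \emph{not} imply $U(n)\sim L(n)\log n$: take $L(t)=1/\log t$, so $q(j)\sim 1/(j\log j)$ gives $U(n)\sim\log\log n$, whereas $L(n)\log n\equiv 1$. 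The correct intermediate statement is that $U$ itself belongs to $\Pi$ at $\infty$ with auxiliary function $L$, i.e. $U(\lambda n)-U(n)\sim L(n)\log\lambda$ for each $\lambda>0$; this is strictly weaker than an asymptotic formula for $U(n)$ and is exactly the input that Theorem 3.9.1 of \cite{Bingham:1989} requires. The whole point of the class $\Pi$ is that only the increments of $U$, not $U$ itself, admit a clean asymptotic.

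The converse of your first link is broken for a second reason: nonnegativity of $q$ makes $U$ nondecreasing, but monotonicity of the \emph{primitive} is useless for differencing; the monotone density theorem needs the \emph{differences} to be ultimately monotone. For instance, with $q(2j)=2/(2j)$ and $q(2j+1)=0$ one has $U\in\Pi$ at $\infty$ with auxiliary function $L\equiv 1$ (and indeed $Q(1-u)\in\Pi$ at $0$), yet (i) fails; so without a monotonicity hypothesis on $q$ the implication (ii)$\Rightarrow$(i) cannot be recovered by your argument (the paper's proof leans on the same unstated Tauberian condition when it differences $r(j)\sim jL(j)$ back to $kq(k)\sim L(k)$; in its applications $q=\overline g_n$ is a tail sequence, hence monotone, so no harm is done there). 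To repair your write-up: replace ``$U(n)\sim L(n)\log n$'' by ``$U\in\Pi$ at $\infty$ with auxiliary function $L$'' throughout, prove the forward direction by summing $q(k)$ over $n<k\le\lambda n$ using uniform slow variation, invoke the monotone density theorem for $\Pi$ (with $q$ ultimately monotone) for the reverse direction, and only then apply the Abelian--Tauberian equivalence of Theorem 3.9.1; your treatment of that second equivalence, including the three-way splitting and Potter bounds, is sound.
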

\begin{proof}
Let $r(j)=\sum_{k=0}^{j}k q(k)$, $j\ge 0$ and $R(z)=\sum_{j=0}^{\infty}r(j)z^j$. Noting that $r(0)=0$, we have
\begin{eqnarray}
R(z)&=&\sum_{j=1}^{\infty}\sum_{k=1}^{j}kq(k)z^j=\sum_{k=1}^{\infty}\sum_{j=k}^{\infty}kq(k)z^j=\sum_{k=1}^{\infty}kq(k)z^k/(1-z)=zQ'(z)/(1-z).
\end{eqnarray}
Therefore, for $x>0$,
\begin{eqnarray}
Q(1-xu)-Q(1-u)&=&\int_{1-u}^{1-xu}s^{-1}(1-s)R(s) ds\nonumber\\
&=&-\int_{1}^{x}(1-ut)^{-1}u^2tR(1-ut)dt.
\end{eqnarray}
Clearly, (\ref{Lemma4-4a}) is equivalent to $r(j)\sim jL(j)$,
Note that $\{r(j)\}_0^{\infty}$ is an increasing sequence. So, it follows from
Lemma \ref{Lemma 4.2} that (\ref{Lemma4-4a}) is equivalent to $R(z)\sim (1-z)^{-2}L(1/(1-z)),\ z\uparrow 1$, this is
$R(1-u)\sim u^{-2}L(1/u),\ u\downarrow 0$.
\begin{eqnarray}
\lim_{u\downarrow 0}\frac {Q(1-xu)-Q(1-u)}{L(1/u)}&=&-\int_{1}^{x}\lim_{u\downarrow 0}\frac {(1-ut)^{-1}u^2tR(1-ut)} {L(1/u)}dt\nonumber\\
&=&-\int_{1}^{x}1/t dt=-\log x,\label{Lemma4-4c}
\end{eqnarray}
where in the first equality we have used the uniform convergence theorem (see, e.g., Bingham, Goldie and Teugels~\cite{Bingham:1989}, p.22) on regular varying functions for interchanging the limit and the integration.
\end{proof}
\begin{lemma}\label{Lemma 4.5new}
Let $\{g(j)\}_{j=0}^{\infty}$ be a discrete probability distribution with the GF $G(z)$, and $n\in\{1,2,\cdots\}$, the following two statements are equivalent:
\begin{flalign}
\begin{split}
\mbox{(i)}&\quad \overline{g}_1(j)\sim j^{-n}L(j)\quad \mbox{ as }  j\to\infty; \label{deHaan3}
\end{split}&\\
\begin{split}
\mbox{(ii)}&\quad \lim_{u\downarrow 0}\frac {\widehat{G}_{n-1}(1-xu)-\widehat{G}_{n-1}(1-u)}{L(1/u)/(n-1)!}=-\log x\quad\mbox{ for all }x>0.\label{deHaan4}
\end{split}&
\end{flalign}
\end{lemma}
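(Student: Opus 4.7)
The plan is to reduce Lemma~\ref{Lemma 4.5new} to the already-established Lemma~\ref{Lemma 4.4new} by an iteration argument. The key observation is that the $n$-th-order tail asymptotic in (i) can be recast as a first-order tail asymptotic on the $n$-fold iterated tail sequence $\overline{g}_n(j)$, to which Lemma~\ref{Lemma 4.4new} applies directly.

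First I would establish the sequence-level reduction
\[
    \overline{g}_1(j)\sim j^{-n}L(j)\quad\Longleftrightarrow\quad \overline{g}_n(j)\sim \frac{j^{-1}L(j)}{(n-1)!},\qquad j\to\infty.
\]
The forward direction follows from $n-1$ iterated applications of Karamata's theorem to the summations $\overline{g}_{k+1}(j)=\sum_{\ell>j}\overline{g}_k(\ell)$; each step lowers the exponent by $1$ and produces a factor $1/(n-k)$, so after $n-1$ steps the constant $1/(n-1)!$ emerges. The reverse direction is Tauberian and is legitimate because each $\overline{g}_k(\cdot)$ is monotone nonincreasing, which rules out oscillations that summation could otherwise smooth away.

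I would then apply Lemma~\ref{Lemma 4.4new} to the sequence $q(j)=(n-1)!\,\overline{g}_n(j)$: the first-order asymptotic $q(j)\sim j^{-1}L(j)$ is equivalent to $Q(1-u)\in \Pi$ at $0$ with auxiliary function $L(1/u)$, where $Q(z)=\sum_{j\ge 0}q(j)z^j$. The remaining task is to identify $Q(z)/(n-1)!=\sum_j\overline{g}_n(j)z^j$ with $\widehat{G}_{n-1}(z)$ modulo a term that is smooth at $z=1$. Starting from the Fact~C identity $\sum_j\overline{g}_1(j)z^j=(1-G(z))/(1-z)$ and iterating summation by parts $n-1$ further times, one obtains an expansion of the form
\[
    \sum_j\overline{g}_n(j)z^j \;=\; P(1-z)\;+\;(-1)^{n-1}\widehat{G}_{n-1}(z),
\]
where $P$ is a polynomial of degree at most $n-2$ whose coefficients involve only those factorial moments of $G$ that are automatically finite under (i). Since $P(1-z)$ is real-analytic at $z=1$, it contributes only $O(u)$ to the difference at scale $L(1/u)$ and is therefore annihilated in the $\Pi$-condition; hence $\Pi$-membership of $Q(1-u)$ with auxiliary $L(1/u)$ transfers to $\Pi$-membership of $\widehat{G}_{n-1}(1-u)$ with auxiliary $L(1/u)/(n-1)!$, which is exactly~(ii).

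The main obstacle is the reverse Tauberian step in the first reduction, since without monotonicity the iterated sum can smooth an oscillating $\overline{g}_1$ into something of regular order — here the automatic monotonicity of the $\overline{g}_k$ supplies what is needed. A secondary technical point is the bookkeeping of the polynomial correction $P(1-z)$ and the sign $(-1)^{n-1}$ in the identification of $\widehat{G}_{n-1}$, modelled on the Taylor-remainder decomposition (\ref{5-1K12}) used in the non-integer case; once that definition is made explicit, the matching of the constant $1/(n-1)!$ into the auxiliary function is a routine consequence of Step~1.
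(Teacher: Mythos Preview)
Your approach is the paper's: reduce (i) to $\overline{g}_n(j)\sim j^{-1}L(j)/(n-1)!$ by iterating Lemma~\ref{Lemma 3.3} (with monotonicity of each $\overline{g}_k$ supplying the converse direction), then invoke Lemma~\ref{Lemma 4.4new} on that sequence.

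The one slip is your third step. You posit $\sum_j\overline{g}_n(j)z^j = P(1-z)+(-1)^{n-1}\widehat{G}_{n-1}(z)$, but Lemma~\ref{Lemma 4.1} gives the \emph{exact} identity $\sum_{j\ge 0}\overline{g}_n(j)z^j=\widehat{G}_{n-1}(z)$, with no polynomial remainder and no sign. If you actually carry out the iteration $H_{k+1}(z)=(H_k(1)-H_k(z))/(1-z)$ you describe, the constant $H_k(1)=\gamma_k/k!$ subtracted at each step is precisely the term built into the definition of $G_k$ via the alternating Taylor sum in (\ref{G_n(z)}), so both the polynomial and the sign cancel identically. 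With that correction your argument collapses to the paper's two-line proof; as written, the spurious $(-1)^{n-1}$ would flip $-\log x$ to $+\log x$ in (ii) for even $n$, so it is worth getting right rather than arguing it away as harmless.
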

\begin{proof}
By using Lemma \ref{Lemma 3.3} repeatedly, (\ref{deHaan3}) is equivalent to
\begin{eqnarray}
\overline{g}_{n}(j)\sim j^{-1}L(j)/(n-1)!\quad \mbox{ as }  j\to\infty.\label{Lemma 4.5newproof-1}
\end{eqnarray}
Note that the sequence $\{\overline{g}_{n}(j)\}_{j=0}^{\infty}$ has the GF $\widehat{G}_{n-1}(z)$ (by Lemma \ref{Lemma 4.1}). The equivalence of
 (\ref{deHaan4}) and (\ref{Lemma 4.5newproof-1}) is proved by applying Lemma \ref{Lemma 4.4new}.
\end{proof}

\vskip 0.2cm

Since $\kappa_{m-2}<\infty$, we can define $K_{m-2}(z)$ in a manner similar to that in (\ref{G_n(z)}).
\begin{eqnarray}
K(z)&=&\sum_{k=0}^{m-2}(-1)^k\frac{\kappa_k}{k!}(1-z)^k+(-1)^{m-1}K_{m-2}(z),\label{(a=m)result1}
\end{eqnarray}
where $K_{m-2}(z)=o\left((1-z)^{m-2}\right)$ as $z\uparrow 1$.
\begin{eqnarray}
\int_z^{1}K(u)du&=&-\sum_{k=1}^{m-1}(-1)^k\frac{\kappa_{k-1}}{k!}(1-z)^k+(-1)^{m-1}\int_z^{1}K_{m-2}(u)du,\label{(a=m)result2}
\end{eqnarray}
where $\int_z^{1}K_{m-2}(u)du=o((1-z)^{m-1})$ as $z\uparrow 1$.

It follows from (\ref{D{(0)}(z)new}) and (\ref{(a=m)result2}) that for some constants $\{v_k;\ k=0,1,2,\cdots,m\}$,
\begin{eqnarray}
D^{(0)}(z)&=&\sum_{k=0}^{m}(-1)^kv_k (1-z)^k+(-1)^{m} \psi\int_z^{1}K_{m-2}(u)du +o((1-z)^{m}), \quad z\uparrow 1.
\end{eqnarray}
Define $\widehat{D}^{(0)}_{m-1}(z)$ in a manner similar to that in (\ref{widehat{G}_n (z)}), we have
\begin{eqnarray}
\widehat{D}^{(0)}_{m-1}(z)=v_m+ \frac {\psi} {(1-z)^m}\int_z^{1} (1-u)^{m-1}\widehat{K}_{m-2}(u)du + o(1),\quad z\uparrow 1,\label{(a=m)result3}
\end{eqnarray}
which immediately leads to:
\begin{eqnarray}
\widehat{D}^{(0)}_{m-1}(1-w)&=&v_m+ \frac {\psi} {w^m}\int_0^{w} u^{m-1}\widehat{K}_{m-2}(1-u)du + o(1),\quad w\downarrow 0,\label{(a=m)result4a}\\
\widehat{D}^{(0)}_{m-1}(1-xw)&=&v_m+ \frac {\psi} {(xw)^m}\int_0^{xw} u^{m-1}\widehat{K}_{m-2}(1-u)du + o(1)\nonumber\\
&=&v_m+ \frac {\psi} {w^m}\int_0^{w} u^{m-1}\widehat{K}_{m-2}(1-xu)du + o(1),\quad w\downarrow 0.\label{(a=m)result4b}
\end{eqnarray}
By (\ref{(a=m)result4a}) and (\ref{(a=m)result4b})
\begin{eqnarray}
&&\widehat{D}^{(0)}_{m-1}(1-xw)-\widehat{D}^{(0)}_{m-1}(1-w)\nonumber\\
&&\qquad\qquad\qquad =\frac {\psi} {w^m}\int_0^{w} u^{m-1}\left(\widehat{K}_{m-2}(1-xu)-\widehat{K}_{m-2}(1-u)\right)du + o(1)\quad w\downarrow 0.\label{(a=m)result4c}
\end{eqnarray}
Note that $\overline{k}_1(j)=P\{K>j\}\sim c_K\cdot j^{-m+1}L(j)$. By Lemma \ref{Lemma 4.5new}, we obtain
\begin{eqnarray}
\widehat{K}_{m-2}(1-xu)-\widehat{K}_{m-2}(1-u)\sim
-(\log x)c_KL(1/u)/(m-2)! \quad u\downarrow 0.\label{(a=m)result5a}
\end{eqnarray}
By Karamata's theorem (Bingham, Goldie and Teugels~\cite{Bingham:1989}, p.28), we know
\begin{eqnarray}
\int_0^{w} u^{m-1}\left(\widehat{K}_{m-2}(1-xu)-\widehat{K}_{m-2}(1-u)\right)du\sim
-(\log x)\frac {c_K} {m}w^m L(1/w)/(m-2)! \quad w\downarrow 0.\label{(a=m)result5b}
\end{eqnarray}
Therefore,
\begin{eqnarray}
\lim_{w\downarrow 0}\frac {\widehat{D}^{(0)}_{m-1}(1-xw)-\widehat{D}^{(0)}_{m-1}(1-w)} {L(1/w)/(m-1)!}%\nonumber\\
=-\frac{m-1} {m}c_K\psi \log x \quad \mbox{(by (\ref{(a=m)result4c}) and (\ref{(a=m)result5b}))}.\label{(a=m)result6}
\end{eqnarray}
By applying Lemma \ref{Lemma 4.5new}, we obtain from (\ref{(a=m)result6}) that
\begin{eqnarray}
p\{D^{(0)}>j\}&\sim& \frac{m-1} {m}c_K\psi j^{-m}L(j)\quad \mbox{as}\ j\to\infty,
\end{eqnarray}
which completes the proof of Theorem~\ref{Theorem 3.2} for integer $a=m\in\{2,3,\cdots\}$.

\section{Refined equivalence theorem}
\label{sec:6}
In this section, under assumptions A1 and A2 we first present the asymptotic tail equivalence for the total numbers of customers in an $M^X/G/1$ retrial queue and the corresponding standard $M^X/G/1$ queue without retrial, which is a generalization (under the assumption of regularly varying tails) of the equivalence theorem in the literature since we removed the restriction imposed on the batch, by allowing the batch size to have a tail probability heavier than that of the service time. Then, we focus on the difference between the tail probability of the total number of customers in the system for the retrial queue and the tail probability of the total number of customers in the corresponding non-retrial queue, and provide a characterization for the asymptotic behavior of this difference, which is our main contribution: a refined result for the tail equivalence between the two systems.

As mentioned in the introduction, in order to establish the equivalence theorem for a retrial queueing system, people often use a stochastic decomposition result (e.g., \cite{Shang-Liu-Li:2006}, \cite{Yamamuro:2012} and \cite{Masuyama:2014}). For the $M^X/G/1$ retrial queue, the total number $L_{\mu}$ of customers in the system can be written as the sum of two independent random variables, the total number $L_{\infty}$ of customers in the corresponding $M^X/G/1$ queueing system (without retrial) and $D^{(0)}$, i.e.,
\begin{eqnarray}\label{L=L+D}
L_{\mu}\stackrel{\rm def}{=}L_{\infty}+D^{(0)}.
\end{eqnarray}
%\begin{remark}
It is well known that
\begin{eqnarray}\label{without-retrial}
Ez^{L_{\infty}}&=&\beta(\lambda-\lambda X(z))
\cdot \frac {(1-\rho)(1-z)} {\beta(\lambda-\lambda X(z))-z}.
\end{eqnarray}
The equality (\ref{L=L+D}) can be verified easily because
\begin{eqnarray}\label{p0(z)+zp1(z)}
Ez^{L_{\mu}}&=&\sum_{n=0}^{\infty}z^nP\{C_{ser}=0,N_{orb}=n\}+\sum_{n=0}^{\infty}z^{n+1}P\{C_{ser}=1,N_{orb}=n\}\nonumber\\
&=&p_0(z)+zp_1(z),
\end{eqnarray}
where $p_i(z)\stackrel{\rm def}{=}\sum_{n=0}^{\infty}z^nP\{C_{ser}=i,N_{orb}=n\}$, $i=0,1$, are explicitly expressed on page 174 of Falin and Templeton~\cite{Falin-Templeton:1997}, with which (\ref{p0(z)+zp1(z)}) leads to $Ez^{L_{\mu}}=Ez^{L_{\infty}}\cdot Ez^{D^{(0)}}$ and then (\ref{L=L+D}).
%\end{remark}

It follows from (\ref{without-retrial}) and (\ref{K{circ}(u)}) that
\begin{eqnarray} \label{eqn:6.4}
E(z^{L_{\infty}})&=&\beta(\lambda-\lambda X(z))\cdot K^{\circ}(z),
\end{eqnarray}
which implies that
$L_{\infty} \stackrel{\rm def}{=} N_{B_X}+K^{\circ}$,
where $N_{B_X}$ and $K^{\circ}$ are assumed to be independent.
Note that, from (\ref{K^{(b)}>n}), (\ref{K^{(b)}>n-case2}) and (\ref{K^{(b)}>n-case3}), under Assumptions~A1 and A2,
\begin{eqnarray}\label{K-circ-1}
    P\{K^{\circ}>j\}&\sim& c_{K^{\circ}}\cdot j^{-a+1}L(j),\quad\mbox{as }j\to\infty,
\end{eqnarray}
where $a=\min(d_B, d_X)>1$ and
\begin{eqnarray}\label{c-K-circ}
c_{K^{\circ}}&=&\left\{
\begin{array}{ll}
(\lambda\chi_1)^{a}/((a-1)(1-\rho)),&\quad \mbox{if }d_X>d_B>1,\\
\lambda\beta_1c_X/((a-1)(1-\rho)),&\quad \mbox{if }1<d_X<d_B \mbox{ and}\ c_X>0, \\
((\lambda\chi_1)^{a}+\lambda\beta_1c_X)/((a-1)(1-\rho)),&\quad \mbox{if }d_X=d_B>1 \mbox{ and}\ c_X>0.
\end{array}
\right.
\end{eqnarray}
It follows from (\ref{NBX>n}), (\ref{NBX>n-case2}), (\ref{NBX>n-case3}) and (\ref{K-circ-1}) that $P\{N_{B_X}>j\}=o(P\{K^{\circ}>j\})$. So,
\begin{eqnarray}
P\{L_{\infty}>j\}\sim P\{K^{\circ}>j\}\sim c_{K^{\circ}}\cdot j^{-a+1}L(j).\label{Linfinity-6.7}
\end{eqnarray}
By Theorem \ref{Theorem 3.2}, we have $P\{D^{(0)}>j\}=o(P\{L_{\infty}>j\})$, and therefore
\begin{eqnarray}\label{Lmu-Linfinity}
P\{L_{\mu}>j\}\sim P\{L_{\infty}>j\}.
\end{eqnarray}

Next, we refine the asymptotic equivalence (\ref{Lmu-Linfinity}). Precisely, we will characterize the asymptotic behavior of the difference $P\{L_{\mu}>j\}-P\{L_{\infty}>j\}$ as $j\to\infty$. Towards this end, we provide the following lemma, which will be used to confirm our assertion later.
We use the notation $\overline{F}(\cdot)=1-F(\cdot)$.

\begin{lemma}\label{lemma-F1F2}
    Let $X_1$ and $X_2$ be independent nonnegative rvs with distribution functions $F_1$ and $F_2$, respectively.
Assume that $\overline{F}_1(t)\sim c_1 t^{-d+1}L(t)$ and $\overline{F}_2(t)\sim c_2 t^{-d}L(t)$ as $t\to\infty$, where $c_1>0$, $c_2>0$, $d>1$ and $L(t)$ is a slowly varying function at infinity. Suppose that $\overline{F}_1(t)=\int_t^{\infty}f_1(x)dx$ and $f_1(x)$ is ultimately decreasing.
Then, as $t\to\infty$,
\begin{eqnarray}\label{lemma-2nd-order}
P\{X_1+X_2>t\}=\overline{F}_1(t)+ (d-1)\mu_{F_2}\cdot t^{-1}\overline{F}_1(t) (1+o(1))+\overline{F}_2(t) (1+o(1)),
\end{eqnarray}
where $\mu_{F_2}<\infty$ is the mean value of $X_2$.
\end{lemma}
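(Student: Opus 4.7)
The plan is to condition on $X_1$:
\[
P\{X_1+X_2>t\} = \int_0^\infty \overline{F}_2(t-x) f_1(x)\,dx = \overline{F}_1(t) + I(t),
\]
where $I(t):=\int_0^t f_1(x)\overline{F}_2(t-x)\,dx$, so the task reduces to showing that $I(t)=\overline{F}_2(t)(1+o(1))+(d-1)\mu_{F_2}\,t^{-1}\overline{F}_1(t)(1+o(1))$. A useful preparation is to apply Karamata's monotone density theorem (valid because $f_1$ is ultimately decreasing and $\overline{F}_1\in R_{-(d-1)}$) to obtain $f_1(t)\sim (d-1)c_1 t^{-d}L(t)=(d-1)t^{-1}\overline{F}_1(t)$, so that the target takes the cleaner equivalent form $I(t)\sim \overline{F}_2(t)+\mu_{F_2}f_1(t)$, with both summands of order $t^{-d}L(t)$.

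Next, I would split $I(t)=I_1(t)+I_2(t)+I_3(t)$ along an auxiliary cutoff $A$, integrating over $[0,A]$, $[A,t-A]$, and $[t-A,t]$ respectively. On $[0,A]$, the uniform convergence theorem for regularly varying functions gives $\overline{F}_2(t-x)/\overline{F}_2(t)\to 1$ uniformly for $x\in[0,A]$, whence $I_1(t)\sim \overline{F}_2(t)F_1(A)$. Symmetrically, on $[t-A,t]$ the substitution $y=t-x$ produces $\int_0^A f_1(t-y)\overline{F}_2(y)\,dy$, and the same uniformity, now applied to the regularly varying density $f_1$ of index $-d$, yields $I_3(t)\sim f_1(t)\int_0^A\overline{F}_2(y)\,dy$. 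These two pieces deliver the desired main terms once $A$ is pushed to infinity, since $F_1(A)\to 1$ and $\int_0^A\overline{F}_2\to\mu_{F_2}$.

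The main obstacle is showing that the middle contribution $I_2(t)=\int_A^{t-A} f_1(x)\overline{F}_2(t-x)\,dx$ is $o(t^{-d}L(t))$ uniformly as $A\to\infty$. Split $I_2$ at $t/2$: on $[A,t/2]$, use $\overline{F}_2(t-x)\le\overline{F}_2(t/2)$ and $\int_A^{t/2}f_1(x)\,dx\le \overline{F}_1(A)$ for a bound $\overline{F}_2(t/2)\overline{F}_1(A)=O(t^{-d}L(t))\cdot\overline{F}_1(A)$; symmetrically on $[t/2,t-A]$, substitute $y=t-x$ and use $f_1(t-y)\le f_1(t/2)$ together with $\int_A^\infty\overline{F}_2(y)\,dy<\infty$ (since $\mu_{F_2}<\infty$) for a bound $O(t^{-d}L(t))\cdot \int_A^\infty\overline{F}_2(y)\,dy$. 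Both prefactors vanish as $A\to\infty$. Combining the three pieces, converting $\mu_{F_2}f_1(t)$ back into $(d-1)\mu_{F_2}t^{-1}\overline{F}_1(t)$, and noting that any residue of order $\overline{F}_1(t)\overline{F}_2(t)=O(t^{-2d+1}L(t)^2)=o(t^{-d}L(t))$ (using $d>1$) is absorbed harmlessly, yields \eqref{lemma-2nd-order}; the bookkeeping of taking $t\to\infty$ before letting $A\to\infty$ is a routine $\epsilon$--$\delta$ exercise.
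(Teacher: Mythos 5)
Your proof is correct, but it follows a different route from the paper's. You condition on $X_1$ and write $P\{X_1+X_2>t\}=\overline{F}_1(t)+\int_0^t f_1(x)\overline{F}_2(t-x)\,dx$, then analyze the convolution integral with a \emph{fixed} two-sided cutoff $A$: the edge pieces $[0,A]$ and $[t-A,t]$ produce the two main terms $\overline{F}_2(t)F_1(A)$ and $f_1(t)\int_0^A\overline{F}_2$ via uniform convergence (or simply monotonicity plus regular variation), the middle piece is killed by the bounds $\overline{F}_2(t/2)\overline{F}_1(A)$ and $f_1(t/2)\int_A^\infty\overline{F}_2$, and a routine $t\to\infty$ then $A\to\infty$ double limit finishes. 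The paper instead uses a \emph{proportional} cutoff: it decomposes the event at the thresholds $\varepsilon t$ for $X_2$ and $(1-\varepsilon)t$ for $X_1$, isolates the cross terms $\pi_1(t,\varepsilon)=\int_0^{\varepsilon t}(\overline{F}_1(t-y)-\overline{F}_1(t))\,dF_2(y)$ and $\pi_2(t,\varepsilon)=\int_0^{(1-\varepsilon)t}(\overline{F}_2(t-y)-\overline{F}_2(t))\,dF_1(y)$, extracts the $(d-1)\mu_{F_2}t^{-1}\overline{F}_1(t)$ term from $\pi_1$ by the mean value theorem sandwiched between $f_1(t)$ and $f_1((1-\varepsilon)t)$, shows $\pi_2=o(\overline{F}_2(t))$ using the uniform convergence theorem together with an elementary bound on $(1-y/t)^{-d}$, and finally lets $\varepsilon\to 0^+$. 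Both arguments rest on the same two pillars — the monotone density theorem giving $f_1(t)\sim(d-1)t^{-1}\overline{F}_1(t)$, and $\mu_{F_2}=\int_0^\infty\overline{F}_2$ — and both transfer to the lattice case via a step density, so the choice is largely one of taste: your fixed-cutoff version is the more standard "second-order convolution" argument and keeps the two main terms visibly separated from the start, while the paper's $\varepsilon$-split works directly with Stieltjes integrals against $dF_1$ and $dF_2$ without ever naming the convolution integral. One small point of care in your write-up: the claim that $I_2(t)=o(t^{-d}L(t))$ "uniformly as $A\to\infty$" should be stated as $\limsup_{t\to\infty} I_2(t)/(t^{-d}L(t))\le \delta(A)$ with $\delta(A)\to 0$, which is exactly what your two bounds give and what the final double limit requires; and the closing remark about a residue of order $\overline{F}_1(t)\overline{F}_2(t)$ is unnecessary in your setup, since your decomposition is exact.
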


\begin{proof}
For $0<\varepsilon<1$ and $t>0$, we have
\begin{eqnarray}\label{lemma-2nd-order-proof-A}
&&P\{X_1+X_2>t\}\nonumber\\
&=&P\{X_1>(1-\varepsilon )t,X_2>\varepsilon t\}\nonumber\\
&&+P\{X_1+X_2>t,X_2\le \varepsilon t\}+P\{X_1+X_2>t,X_1\le (1-\varepsilon) t\}\nonumber\\
&=&\overline{F}_1((1-\varepsilon )t)\overline{F}_2(\varepsilon t)+P\{X_1>t,X_2\le \varepsilon t\}
+P\{X_1+X_2>t,X_1\le t, X_2\le \varepsilon t\}\nonumber\\
&&+P\{X_1\le (1-\varepsilon) t,X_2>t\}+P\{X_1+X_2>t, X_1\le (1-\varepsilon )t,X_2\le t\}\nonumber\\
&=&\overline{F}_1((1-\varepsilon )t)\overline{F}_2(\varepsilon t)+\overline{F}_1(t)-\overline{F}_1(t)\overline{F}_2(\varepsilon t)
+\pi_1(t,\varepsilon)\nonumber\\
&&+\overline{F}_2(t)-\overline{F}_2(t)\overline{F}_1((1-\varepsilon )t)+\pi_2(t,\varepsilon),
\end{eqnarray}
where
\begin{eqnarray}
\pi_1(t,\varepsilon)&=&\int_0^{\varepsilon t}(\overline{F}_1(t-y)-\overline{F}_1(t))dF_2(y),\nonumber\\
\pi_2(t,\varepsilon)&=&\int_0^{(1-\varepsilon )t}(\overline{F}_2(t-y)-\overline{F}_2(t))dF_1(y).\nonumber
\end{eqnarray}

By the monotone density theorem (e.g., page 568 in Embrechts, Kluppelberg and Mikosch~\cite{Embrechts1997}), we know that
$f_1(t)\sim (d-1)c_1 t^{-d}L(t)\sim (d-1)t^{-1}\overline{F}_1(t)$.
By the mean value theorem, there exists $\theta=\theta(t,y)\in (0, 1)$ such that $\overline{F}_1(t-y)-\overline{F}_1(t)=f_1(t-\theta y)\cdot y$ for $0\le y\le \varepsilon t$. The decreasing property of $f_1$ yields $f_1(t)\le f_1(t-\theta y)\le f_1((1-\varepsilon )t)$ for $0\le y\le \varepsilon t$.
Therefore
\begin{eqnarray}
f_1(t)\int_0^{\varepsilon t}y dF_2(y)\le \pi_1(t,\varepsilon)\le f_1((1-\varepsilon )t)\int_0^{\varepsilon t}y dF_2(y),
\end{eqnarray}
which follows
\begin{eqnarray}\label{lemma-2nd-order-proof-supp2}
(d-1)\mu_{F_2}=\liminf_{t\to\infty}\frac {\pi_1(t,\varepsilon)} {t^{-1}\overline{F}_1(t)} \le\limsup_{t\to\infty}\frac {\pi_1(t,\varepsilon)} {t^{-1}\overline{F}_1(t)}=(d-1)(1-\varepsilon)^{-d}\mu_{F_2}
\end{eqnarray}

Next, we prove the following asymptotic result:
\begin{eqnarray}
\frac {\pi_2(t,\varepsilon)} {\overline{F}_2(t)}=\int_0^{(1-\varepsilon )t}\left[\frac{\overline{F}_2(t-y)} {\overline{F}_2(t)} -1\right]dF_1(y)&=& o(1).\label{lemma-2nd-order-proof-supp1}
\end{eqnarray}
Note that
\begin{eqnarray}\label{lemma-2nd-order-proof-supp3}
&&\int_0^{(1-\varepsilon )t}\left[\frac{\overline{F}_2(t-y)} {\overline{F}_2(t)} -1\right]dF_1(y)-\int_0^{(1-\varepsilon )t}\left[\left(1-{y\over t}\right)^{-d}-1\right]dF_1(y)\nonumber\\
&=&\int_0^{(1-\varepsilon )t}\left(1-{y\over t}\right)^{-d}\left[\frac {L(t(1-y/t))} {L(t)}(1+o(1))-1\right]dF_1(y)\nonumber\\
&=&o(1)
\end{eqnarray}
where the last equality is due to the the fact: by the uniform convergence theorem for slowly varying functions, $L(t(1-y/t))/L(t)\to 1$ uniformly on $y\in [0,(1-\varepsilon )t]$ (or $1-y/t\in [\varepsilon,1]$).

Since, for $b> 0$ and $0\le w\le 1-\varepsilon$,
\begin{eqnarray}
(1-w)^{-b}&=&1+bw+\frac {b(b+1)} {2!}w^2+\frac {b(b+1)(b+2)} {3!} w^3+\cdots\nonumber\\
&\le&1+bw\left[1+(b+1)w+\frac {(b+1)(b+2)} {2!} w^2+\cdots\right]\nonumber\\
&=&1+bw[(1-w)^{-b-1}]\le 1 + bw/\varepsilon^{b+1},
\end{eqnarray}
we have
\begin{eqnarray}\label{lemma-2nd-order-proof-supp4}
0\le\int_0^{(1-\varepsilon )t}\left[\left(1-{y\over t}\right)^{-d}-1\right]dF_1(y)\le \frac {d} {\varepsilon^{d+1}}\cdot\frac 1 t
\int_0^{(1-\varepsilon )t}ydF_1(y)
=o(1),
\end{eqnarray}
where the last equality is due to $\int_0^t x dF_1(x)=-t \overline{F}_1(t)+\int_0^t \overline{F}_1(x)dx$.
Now, (\ref{lemma-2nd-order-proof-supp1}) follows from (\ref{lemma-2nd-order-proof-supp3}) and (\ref{lemma-2nd-order-proof-supp4}).

Now, let us recall (\ref{lemma-2nd-order-proof-A}). Note that for all $0<\varepsilon<1$, $\overline{F}_1((1-\varepsilon )t)\overline{F}_2(\varepsilon t)=o(\overline{F}_2(t))$, $\overline{F}_1(t)\overline{F}_2(\varepsilon t)=o(\overline{F}_2(t))$, $\overline{F}_2(t)\overline{F}_1((1-\varepsilon )t)=o(\overline{F}_2(t))$ and $\pi_2(t,\varepsilon)=o(\overline{F}_2(t))$ (by \ref{lemma-2nd-order-proof-supp1})). Using (\ref{lemma-2nd-order-proof-supp2}) and taking $\varepsilon\to 0^+$, we complete the proof.
\end{proof}

\begin{remark}
In Lemma~\ref{lemma-F1F2}, if $X_1$ is a nonnegative integer valued r.v., then the condition that $f_1(x)$ is ultimately decreasing should be replaced by that $P\{X_1=j\}$ is ultimately decreasing, because we can define $f_1(t)\stackrel{\rm def}{=}P\{X_1=j\}$ for $j\le t<j+1$ and $j\ge 0$.
\end{remark}

\begin{remark} The ultimately decreasing condition imposed in Lemma~\ref{lemma-F1F2} is, in general, a non-trivial one to justify. We discuss this condition in Section~\ref{sec:8}.
\end{remark}

Recalling (\ref{L=L+D}), (\ref{Linfinity-6.7}) and (\ref{main-1}), applying Lemma~\ref{lemma-F1F2} with the setting of $X_1=L_{\infty}$, $X_2=D^{(0)}$ and $X_1+X_2=L_{\mu}$, and noting that $E(X_2)=\frac d {dz}D^{(0)}(z)\big |_{z=1}=\psi $ (see (\ref{D{(0)}(z)-2})), we conclude that
\begin{eqnarray}
    P\{L_{\mu}>j\}-P\{L_{\infty}>j\}\sim (a-1)\psi\cdot j^{-1}P\{L_{\infty}>j\} +P\{D^{(0)}>j\}.\label{pre-main-theorem}
\end{eqnarray}

The results (\ref{Linfinity-6.7}), (\ref{Lmu-Linfinity}), (\ref{pre-main-theorem}) and (\ref{main-1}) are  summarized in the following theorem.
\begin{theorem}[Main theorem -- a refined equivalence] \label{Theorem refined}
For the stable $M^X/G/1$ retrial queue with assumptions~A1 and A2, we have the following asymptotic properties. As $j\to\infty$,
\begin{flalign}
\begin{split}
P\{L_{\mu}>j\}\sim P\{L_{\infty}>j\}\sim c_{K^{\circ}}\cdot j^{-a+1}L(j), \label{refined-1}
\end{split}
\end{flalign}
furthermore, if $P\{L_{\infty}=j\}$ is ultimately decreasing in $j$, then
\begin{flalign}
\begin{split}
\quad P\{L_{\mu}>j\}-P\{L_{\infty}>j\}\sim \left[(a-1)\psi c_{K^{\circ}}+c_{D^{(0)}}\right]\cdot j^{-a}L(j),
\end{split}
\end{flalign}
where $\psi$, $c_{K^{\circ}}$ and $c_{D^{(0)}}$ are given in (\ref{psi}), (\ref{c-K-circ}) and (\ref{main-2}), respectively.
\end{theorem}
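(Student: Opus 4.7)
The plan is to assemble the theorem from pieces that have essentially all been prepared in the preceding pages; the work is in verifying that the hypotheses of the auxiliary lemmas are met and in identifying the leading constants.

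First I would establish the tail asymptotics for $L_{\infty}$. The stochastic decomposition $L_{\infty}\overset{d}{=} N_{B_X}+K^{\circ}$ (from $E(z^{L_{\infty}})=\beta(\lambda-\lambda X(z))\cdot K^{\circ}(z)$) reduces the problem to comparing the tails of $N_{B_X}$ and $K^{\circ}$. The tail of $K^{\circ}$ in each of the three cases has been computed in Section~\ref{sec:4} (equations (\ref{K^{(b)}>n}), (\ref{K^{(b)}>n-case2}), (\ref{K^{(b)}>n-case3})) and aggregated into (\ref{c-K-circ}); meanwhile $P\{N_{B_X}>j\}$ is $O(j^{-a}L(j))$, one order lighter. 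By Part~(i) of Lemma~\ref{Lemma 3.4new}, $P\{L_{\infty}>j\}\sim P\{K^{\circ}>j\}\sim c_{K^{\circ}}\cdot j^{-a+1}L(j)$, which is (\ref{Linfinity-6.7}). Combining this with the key result $P\{D^{(0)}>j\}\sim c_{D^{(0)}}\cdot j^{-a}L(j)$ of Theorem~\ref{Theorem 3.2}, which is one order lighter in $j$, a second application of Part~(i) of Lemma~\ref{Lemma 3.4new} applied to the decomposition (\ref{L=L+D}) yields $P\{L_{\mu}>j\}\sim P\{L_{\infty}>j\}$. This proves (\ref{refined-1}).

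Next I would prove the refined asymptotic, which is exactly where Lemma~\ref{lemma-F1F2} is designed to enter. The plan is to set $X_1=L_{\infty}$, $X_2=D^{(0)}$, so that $X_1+X_2\overset{d}{=}L_{\mu}$ by (\ref{L=L+D}). To apply the lemma I need to check: (a) both random variables are nonnegative with regularly varying tails of the correct orders, which is done in the previous paragraph with $d=a$; (b) $E(X_2)<\infty$, and in fact $E(D^{(0)})=\frac{d}{dz}D^{(0)}(z)\big|_{z=1}=\psi$, directly from (\ref{D{(0)}(z)-2}) and (\ref{K{ast}(u)})--(\ref{K{circ}(u)}); (c) the mass function of $X_1=L_{\infty}$ is ultimately monotone decreasing, which is precisely the additional hypothesis in the theorem statement (invoked via the Remark following Lemma~\ref{lemma-F1F2} for integer-valued $X_1$). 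With these verified, Lemma~\ref{lemma-F1F2} gives (\ref{pre-main-theorem}):
\begin{eqnarray*}
P\{L_{\mu}>j\}-P\{L_{\infty}>j\}\sim (a-1)\psi\cdot j^{-1}P\{L_{\infty}>j\}+P\{D^{(0)}>j\}.
\end{eqnarray*}
Plugging in $P\{L_{\infty}>j\}\sim c_{K^{\circ}}j^{-a+1}L(j)$ and $P\{D^{(0)}>j\}\sim c_{D^{(0)}}j^{-a}L(j)$ and collecting the $j^{-a}L(j)$ terms gives the stated constant $(a-1)\psi c_{K^{\circ}}+c_{D^{(0)}}$.

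The only nontrivial point, and the most delicate one, is the invocation of Lemma~\ref{lemma-F1F2}: the first-order equivalence $P\{L_{\mu}>j\}\sim P\{L_{\infty}>j\}$ is easy because $D^{(0)}$ is one order lighter, but extracting the next-order correction requires controlling the second-order term $(a-1)\mu_{F_2}\cdot t^{-1}\overline{F}_1(t)$, which is of exactly the same order $j^{-a}L(j)$ as $P\{D^{(0)}>j\}$ itself. This is why the monotonicity assumption on $P\{L_{\infty}=j\}$ appears — it is what makes the mean-value argument in the proof of Lemma~\ref{lemma-F1F2} go through and pins down the constant $(a-1)\psi$. Everything else is arithmetic bookkeeping: substituting the three-case expressions (\ref{c-K-circ}) and (\ref{main-2}) into the final formula if one wanted case-by-case constants, but the compact form in the theorem statement suffices.
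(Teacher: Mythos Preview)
Your proposal is correct and follows essentially the same route as the paper's own proof: decompose $L_{\infty}\overset{d}{=}N_{B_X}+K^{\circ}$ to get (\ref{Linfinity-6.7}), combine with Theorem~\ref{Theorem 3.2} for the first-order equivalence, then apply Lemma~\ref{lemma-F1F2} (via the Remark for integer-valued $X_1$) with $X_1=L_{\infty}$, $X_2=D^{(0)}$, $E(D^{(0)})=\psi$ to extract the second-order term. Your closing commentary on why the monotonicity hypothesis is needed is accurate and matches the role it plays in the paper.
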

\begin{remark}
It is worth mentioning that in the first part of Theorem \ref{Theorem refined}, the asymptotic equivalence $P\{L_{\mu}>j\}\sim P\{L_{\infty}>j\}$
is proved without the assumption of a lighter tail for the batch size than that for the service time. In contrast, this equivalence was verified with the assumption of a light-tailed batch size in \cite{Yamamuro:2012} or a moderately heavy-tailed batch size in \cite{Masuyama:2014}, but in both the batch size has a tail lighter than that for the service time.
\end{remark}

\section{Asymptotic property for the tail probability of the rv $D^{(1)}$}
\label{sec:7}
Recall the definition of the rv $D^{(1)}$ in Section~\ref{sec:2}, i.e.,
$D^{(1)}$ is a rv having the distribution equal to the conditional distribution of the number of repeated customers in the orbit given that the server is busy.
Consider $D^{(1)}(z)\stackrel{\rm def}{=}E(z^{N_{orb}}|C_{ser}=1)$. Note that $P\{C_{ser}=1\}=\rho$.
The following result on $D^{(1)}(z)$ is from (Falin and Templeton~\cite{Falin-Templeton:1997}, pp.174):
\begin{eqnarray}
D^{(1)}(z)\stackrel{\rm def}{=}E(z^{N_{orb}}|C_{ser}=1)&=&\frac {1-\beta(\lambda-\lambda X(z))} {\beta(\lambda-\lambda X(z))-z}\cdot \frac {1-\rho} {\rho}\cdot D^{(0)}(z),\label{6-0}
\end{eqnarray}
where $D^{(0)}(z)$ is given in (\ref{D{(0)}(z)}). Rewritting (\ref{6-0}) gives
\begin{eqnarray}
D^{(1)}(z)&=&\frac {1-\beta(\lambda-\lambda X(z))} {(\lambda-\lambda X(z))\beta_1} \cdot\frac {1- X(z)} {(1-z)\chi_1 }\cdot \frac {(1-\rho)(1-z)} {\beta(\lambda-\lambda X(z))-z}\cdot D^{(0)}(z)\nonumber\\
&=&\beta^{(e)}(\lambda-\lambda X(z))\cdot X^{(de)}(z)\cdot K^{\circ}(z)\cdot D^{(0)}(z),\label{6-1}
\end{eqnarray}
where $K^{\circ}(\cdot)$ is defined in (\ref{K{circ}(u)}), $\beta^{(e)}(\lambda-\lambda X(z))\cdot X^{(de)}(z)$ is stated in Fact D.

It follows from (\ref{6-1}) that
\begin{eqnarray}\label{H''}
D^{(1)}& \stackrel{\rm def}{=} &N_{B^{(e)}_XX^{(de)}}+K^{\circ}+D^{(0)},
\end{eqnarray}
where $N_{B^{(e)}_XX^{(de)}}$, $K^{\circ}$ and $D^{(0)}$, stated in Sections \ref{sec:2} and \ref{decomposition}, are independent rvs having GFs $\beta^{(e)}(\lambda-\lambda X(z))\cdot X^{(de)}(z)$, $K^{\circ}(z)$ and $D^{(0)}(z)$, respectively.
It follows from (\ref{main-1}) and (\ref{K-circ-1}) that $P\{D^{(0)}>j\}=o(P\{K^{\circ}>j\})$, hence
\begin{eqnarray}
P\{D^{(1)}>j\}
&\sim&P\{N_{B^{(e)}_XX^{(de)}}+K^{\circ}>j\}.\label{6-3}
\end{eqnarray}

Similar to $P\{D^{(0)}>j\}$, our discussion on $P\{D^{(1)}>j\}$ is divided into three cases, which is essentially based on whether the batch size $X$ has a tail lighter than, heavier than, or equivalent to that for the service time $B$.

\noindent{\it Case 1. $d_X>d_B$ in Assumptions~A1 and A2:}

In this case, the asymptotic property for the tail probabilities of $P(N_{B^{(e)}_XX^{(de)}}>j)$ and $P\{K^{\circ}>j\}$ as $j\to\infty$, are given in
(\ref{NBeXXe-NBeX}) and (\ref{K^{(b)}>n}), respectively. Applying Part (ii) of Lemma \ref{Lemma 3.4new}, we get
\begin{eqnarray}
P\{D^{(1)}>j\}\sim \frac {(\lambda\chi_1)^{d_B}} {(d_B-1)(1-\rho)\rho}\cdot j^{-d_B+1}L(j),\quad j\to\infty.\label{D1-case1}
\end{eqnarray}

\noindent{\it Case 2. $d_X<d_B$ and $c_X> 0$ in Assumptions~A1 and A2:}

In this case, the asymptotic property for the tail probabilities of $P(N_{B^{(e)}_XX^{(de)}}>j)$ and $P\{K^{\circ}>j\}$ as $j\to\infty$, are given in
(\ref{NBeXXe-NBeX-case2}) and (\ref{K^{(b)}>n-case2}), respectively. Applying Lemma \ref{Lemma 3.4new}, we get
\begin{eqnarray}
P\{D^{(1)}>j\}\sim \frac {\lambda\beta_1c_X} {(d_X-1)(1-\rho)\rho}\cdot j^{-d_X+1}L(j),\quad j\to\infty.\label{D1-case2}
\end{eqnarray}

\noindent{\it Case 3. $d_X=d_B=a$ and  $c_X> 0$ in Assumptions~A1 and A2:}

In a manner similar to Cases 1 and 2, one can prove
\begin{eqnarray}
P\{D^{(1)}>j\}\sim \frac {(\lambda\chi_1)^{a}+\lambda\beta_1c_X} {(a-1)(1-\rho)\rho}\cdot j^{-a+1}L(j),\quad j\to\infty,\label{D1-case3}
\end{eqnarray}
where we have skipped the detailed derivations to avoid the repetition.

The above results in three cases are summarized in the following theorem.
\begin{theorem}\label{Theorem 5.1}
Under A1 and A2,
\begin{eqnarray}
P\{D^{(1)}>j\}&\sim& c_{D^{(1)}}\cdot j^{-a+1}L(j),\quad\mbox{as }j\to\infty,\label{Theorem3-2a}
\end{eqnarray}
where $a=\min(d_B, d_X)>1$ and
\begin{eqnarray}
c_{D^{(1)}}=\left\{
\begin{array}{ll}
(\lambda\chi_1)^{a}/((a-1)(1-\rho)\rho), &\quad \mbox{ if }d_X>d_B>1,\\
\lambda\beta_1c_X/((a-1)(1-\rho)\rho), &\quad \mbox{ if }1<d_X<d_B \mbox{ and}\ c_X>0, \\
((\lambda\chi_1)^{a}+\lambda\beta_1c_X)/((a-1)(1-\rho)\rho), &\quad \mbox{ if }d_X=d_B>1 \mbox{ and}\ c_X>0.
\end{array}
\right.
\end{eqnarray}
\end{theorem}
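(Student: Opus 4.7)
The plan is to exploit the stochastic decomposition already derived in (\ref{H''}), which expresses $D^{(1)}$ as the independent sum $N_{B^{(e)}_X X^{(de)}}+K^{\circ}+D^{(0)}$. The asymptotics for $K^{\circ}$ were obtained case by case in Section~\ref{sec:4}, and the asymptotics for $D^{(0)}$ are the content of Theorem~\ref{Theorem 3.2}. A direct comparison of orders reduces the problem to computing the tail of $N_{B^{(e)}_X X^{(de)}}+K^{\circ}$.

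First I would observe that $P\{D^{(0)}>j\}$ is of order $j^{-a}L(j)$ while $P\{K^{\circ}>j\}$ is of order $j^{-a+1}L(j)$, so $D^{(0)}$ is strictly subdominant in the sum $D^{(0)}+K^{\circ}$. Applying Part~(i) of Lemma~\ref{Lemma 3.4new} yields the reduction $P\{D^{(1)}>j\}\sim P\{N_{B^{(e)}_X X^{(de)}}+K^{\circ}>j\}$ recorded in (\ref{6-3}). Next I would split into the three cases governed by whether the batch-size tail is lighter than, heavier than, or of the same order as the service-time tail. In each case the tail of $N_{B^{(e)}_X X^{(de)}}$ is already in hand from (\ref{NBeXXe-NBeX}), (\ref{NBeXXe-NBeX-case2}), (\ref{N^{e}BXX^{de}>n-case3}), and the tail of $K^{\circ}$ from (\ref{K^{(b)}>n})--(\ref{K^{(b)}>n-case3}); both are regularly varying of index $-(a-1)$, so Part~(ii) of Lemma~\ref{Lemma 3.4new} permits the two leading coefficients to be added directly.

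The remaining work is algebraic simplification via the stability identity $\rho=\lambda\beta_1\chi_1$. For example, in Case~1 the two leading coefficients combine as
\begin{eqnarray*}
\frac{(\lambda\chi_1)^{d_B-1}}{(d_B-1)\beta_1}+\frac{(\lambda\chi_1)^{d_B}}{(d_B-1)(1-\rho)}
=\frac{(\lambda\chi_1)^{d_B-1}}{d_B-1}\cdot\frac{(1-\rho)+\lambda\beta_1\chi_1}{\beta_1(1-\rho)}
=\frac{(\lambda\chi_1)^{d_B}}{(d_B-1)\rho(1-\rho)},
\end{eqnarray*}
which is precisely the value of $c_{D^{(1)}}$ claimed for $d_X>d_B$. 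Cases~2 and~3 follow by the identical mechanism: the summation $P\{N_{B^{(e)}_X X^{(de)}}>j\}+P\{K^{\circ}>j\}$ telescopes into a single expression because the term involving $\beta_1$ in the denominator cancels against $\rho=\lambda\beta_1\chi_1$ in the numerator. The main obstacle is therefore bookkeeping rather than analysis: verifying that in every case the two coefficients collapse exactly into the compact form stated. Once that is checked, Theorem~\ref{Theorem 5.1} is an immediate consequence of the decomposition (\ref{H''}), the established asymptotics for $K^{\circ}$ and $N_{B^{(e)}_X X^{(de)}}$, and Lemma~\ref{Lemma 3.4new}.
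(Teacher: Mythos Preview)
Your proposal is correct and follows essentially the same route as the paper: the decomposition (\ref{H''}), the observation that $D^{(0)}$ is of lower order than $K^{\circ}$ leading to (\ref{6-3}), the three-case split using the Section~\ref{sec:4} asymptotics for $N_{B^{(e)}_XX^{(de)}}$ and $K^{\circ}$, and the combination via Lemma~\ref{Lemma 3.4new}. If anything you are slightly more explicit than the paper, which records only the final constants without displaying the algebraic simplification through $\rho=\lambda\beta_1\chi_1$.
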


\section{Discussions on the key condition in Lemma~\ref{lemma-F1F2}}
\label{sec:8}

The ultimately decreasing property of $P\{L_\infty =j\}$ is the key condition for applying Lemma~\ref{lemma-F1F2}. This type of condition could appear in various situations in queueing analysis. For example, it is a standard assumption in many Tauberian theorems. In general, verification of this condition can be non-trivial, since the probability distribution itself is unknown. In this section, we propose a procedure for verifying the ultimately decreasing condition for $P\{L_\infty =j\}$ for the two possible cases: in the first case, the tail probability of the service time is heavier than that of the batch size, and the second case is vice versa. We demonstrate this procedure in detail in terms of two specific models.  This procedure is based on the asymptotic expansion of $Ez^{L_{\infty}}$, which is valid if the models allow for the detailed second term in the expansion.

Before proceeding to the examples, we prove the following lemma, and present a few literature properties for expansions, which will be used later.

\begin{lemma}\label{exmp-lemma-1}
Suppose that  $x_n=c_0n^{-d_0}+\sum_{i=1}^{k} c_i n^{-d_i} +o(n^{-d_0-1})$ for $n\ge 0$, where $c_0>0$, $d_0>0$, $k\ge 0$, $d_0<d_i\le d_0+1$ and
$-\infty<c_i<\infty$ for $i=1,\cdots,k$. Then, $\{x_n\}_{n=0}^{\infty}$ is an ultimately decreasing sequence.
\end{lemma}

\begin{proof}
Let $\Delta x_n=x_{n+1}-x_n$ for $n\ge 0$. Then,
\begin{eqnarray}
\Delta x_n&=&c_0 n^{-d_0}\big[(1+1/n)^{-d_0}-1\big]+\sum_{i=1}^{k} c_i n^{-d_i}\big[(1+1/n)^{-d_i}-1\big]+o(n^{-d_0-1})\nonumber\\
&=&-c_0 d_0 n^{-d_0 -1} +o(n^{-d_0-1}),
\end{eqnarray}
which is ultimately negative.
\end{proof}

It is worthwhile to mention that identifying $c_0$ and $d_0$, which satisfies the condition of Lemma~\ref{exmp-lemma-1}, is the key for verifying the ultimately decreasing condition of $x_n$, while specific values for $c_i$ and $d_i$ for $i >0$ are not important.

Recalling (6.4)\ref{eqn:6.4} and (\ref{K{circ}(z)-2}), we have
\begin{eqnarray}\label{without-retrial-1}
Ez^{L_{\infty}}&=&\beta(\lambda-\lambda X(z))\cdot K^{\circ}(z),\label{EzL{infty}}\\
K^{\circ}(z)&=&\frac {1-\rho}{1-\rho\beta^{(e)}(\lambda-\lambda X(z))\cdot X^{(de)}(z)}.\label{exmp-Kcirc}
\end{eqnarray}

Remember that $P\{L_{\infty}=j\}$ is given as the coefficient of the term $z^j$ in the generating function $Ez^{L_{\infty}}$. To study the tail asymptotic property of the coefficient sequence, it is convenient to establish the correspondence between the asymptotic property of the generating function at its dominant singularity ($z=1$ in our case) and the tail asymptotic property in the coefficient sequence. For this purpose, the following facts are useful. Readers may refer to pages 381--392, Theorem~VI.1 and Theorem~VI.3(ii) in Flajolet and Sedgewick~\cite{Flajolet-Sedgewick:2008} for details of facts~(i), (ii) and (iii), respectively. For a definition of the $\Delta$-analyticity, see, for example, Definition~VI.1 in \cite{Flajolet-Sedgewick:2008}.

\begin{lemma}[Flajolet and Sedgewick~\cite{Flajolet-Sedgewick:2008}] \label{lemma:FS}
\begin{description}
\item[(i)] Let $\mathcal{P}(s)$ be any polynomial of degree $k\ge 0$, i.e., $\mathcal{P}(s)=p_0+p_1 s +p_2 s^2 +\cdots +p_{k}s^{k}$. Assume that $F(z)=\sum_{n=0}^{\infty}f_n z^{n}=\mathcal{P}(1-z)$. Then, there exists some $n_0$ such that $f_n\equiv 0$ for $n\ge n_0$. A special case is $F(z)=(1-z)^{m}$, where $m$ is a non-negative integer.

\item[(ii)] Assume that $F(z)=\sum_{n=0}^{\infty}f_n z^{n}=(1-z)^{\sigma}$, where $\sigma\neq 0,1,2,\cdots$. Then, $f_n=n^{-\sigma-1}/\Gamma(-\sigma)\Big(1+\frac{\sigma(\sigma+1)} {2}n^{-1} +o(n^{-1})\Big)$ as $n\to\infty$.

\item[(iii)] Assume that $F(z)=\sum_{n=0}^{\infty}f_n z^{n}$ is $\Delta$-analytic at $1$, and $F(z)=o((1-z)^{\sigma})$ as $z$ goes to $1$ within  the $\Delta$-domain at $1$, where $\sigma\neq 0,1,2,\cdots$. Then, $f_n=o(n^{-\sigma-1})$ as $n\to\infty$.
\end{description}
\end{lemma}

By property~(i) of the above lemma, we do not need to specify the expression for any polynomial $\mathcal{P}(1-z)$ in our later asymptotic analysis.

\subsection{Example A: Pareto service time and geometric batch size}

In this subsection, we demonstrate the procedure by an example for the case that the tail probability of the service time is heavier than the tail probability of the batch size. Specifically, assume
that the service time $B$ has a Pareto distribution with {\it non-integer} shape parameter  $a>1$ and scale parameter $b>0$, i.e.,
$B(x)=1-\left(1+ x/b\right)^{-a}$ for $x\ge 0$, and $\beta_1=E(B)=b/(a-1)$.
Suppose that the batch size $X$ has a geometric distribution $P\{X=n\}=(1-q) q^{n-1}$, $0< q<1$, $n=1,2,\ldots$, with mean $\chi_1=E(X)= 1/(1-q)$ and the GF
$X(z)= (1-q)z/(1-qz)$. It follows from the definition (see Fact D) that $X^{(de)}$ has the GF $X^{(de)}(z)= (1-q)/(1-qz)$.

Let $\mathbb{C}$ denote the complex plane and $\mathbb{D}=\mathbb{C}\backslash(-\infty,0]$ denote the complex plane $\mathbb{C}$ with the negative real axis $(-\infty,0]$ (the branch cut) removed. By the result of Goovaerts et al. (see (3.15) in \cite{Goovaerts-D'Hooge-Pril:1977}), the LSTs $\beta(s)$ and $\beta^{(e)}(s)$ can be defined by analytical continuation for $s\in \mathbb{D}$ as follows:
\begin{eqnarray}
\beta(s)&=&1+\sum_{n=1}^{\infty}\frac {(-bs)^n}{(a-1)\cdots (a-n)} -\Gamma (1-a)(bs)^a e^{bs},\label{exmp-beta(s)-exact}\\
\beta^{(e)}(s)=\frac {1-\beta(s)} {\beta_1 s} &=&1+\sum_{n=1}^{\infty}\frac {(-bs)^n}{(a-2)\cdots (a-1-n)} -\Gamma (2-a)(bs)^{a-1} e^{bs}.\label{exmp-beta(e)(s)-exact}
\end{eqnarray}

It can be proved that $Ez^{L_{\infty}}$, given in (\ref{EzL{infty}}) and (\ref{exmp-Kcirc}), can be analytically continued to $\mathbb{C}\backslash [1,\infty)$ (see Appendix~\ref{app-B} for details).
Using the fact that $e^{bs} =1 + bs +o(s)$ as $s\to 0$, we have
\begin{eqnarray}
\beta(s)&=&1+s\mathcal{R}(s) -\Gamma (1-a) (bs)^a + o(s^{a}),\label{exmp-beta(s)-asym}\\
\beta^{(e)}(s)&=&1+s\mathcal{Q}(s) -\Gamma (2-a)\big[(bs)^{a-1}+(bs)^{a}\big] + o(s^{a}),\label{exmp-beta(e)(s)-asym}
\end{eqnarray}
where $\mathcal{R}(s)$ and $\mathcal{Q}(s)$ are two polynomials of degree $\lfloor a\rfloor-1$ (the symbol $\lfloor a\rfloor$ denotes the integral part of $a$).
In addition,
\begin{eqnarray}
1-X(z)&=&\frac {(1-z)/(1-q)} {1+q(1-z)/(1-q)}\ =\ \frac {1-z} {1-q}\left[1 + \sum_{k=1}^{\infty}\left(-q\cdot\frac {1-z} {1-q} \right)^k\right],\label{exmp-X(z)-asym}\\
\big(1-X(z)\big)^a&=&\left(\frac {1-z} {1-q}\right)^a +o((1-z)^{a}),\quad \mbox{ as }z\uparrow 1,\label{exmp(1-X(z))^(a)-asym}\\
\big(1-X(z)\big)^{a-1}&=&\left(\frac {1-z} {1-q}\right)^{a-1} - (a-1)q \left(\frac {1-z} {1-q}\right)^{a} +o((1-z)^{a}),\quad \mbox{ as }z\uparrow 1,\label{exmp(1-X(z))^(a-1)-asym}
\end{eqnarray}
where the fact that $(1-x)^{b}=1-bx +o(x)$ as $x\to 0$ was used.

By (\ref{exmp-beta(e)(s)-asym}), (\ref{exmp(1-X(z))^(a)-asym}) and (\ref{exmp(1-X(z))^(a-1)-asym}), we have
\begin{eqnarray}
\beta^{(e)}(\lambda-\lambda X(z))&=&1+(1-z)\mathcal{Q}_1(1-z) -\Gamma (2-a)
\left(b\lambda\cdot\frac {1-z} {1-q}\right)^{a-1} \nonumber\\
&&+h_1\cdot(1-z)^{a}+ o((1-z)^{a}),\quad \mbox{ as }z\uparrow 1,\label{exmp-beta(e)(lambda-lambdaX(z))}
\end{eqnarray}
where $\mathcal{Q}_1(s)$ is a polynomial of degree $\lfloor a\rfloor-1$, and $h_1$ is a real number.

By (\ref{exmp-X(z)-asym}),
\begin{eqnarray}
X^{(de)}(z)&=&1 + \sum_{k=1}^{\infty}\left(-q\cdot\frac {1-z} {1-q} \right)^k.\label{exmp-X(de)(z)-asym}
\end{eqnarray}
It follows from (\ref{exmp-beta(e)(lambda-lambdaX(z))}) and (\ref{exmp-X(de)(z)-asym}) that
\begin{eqnarray}
\beta^{(e)}(\lambda-\lambda X(z))X^{(de)}(z)&=&1+(1-z)\mathcal{Q}_2(1-z) -\Gamma (2-a)
\left(b\lambda\cdot\frac {1-z} {1-q}\right)^{a-1}  \nonumber\\
&&+h_2\cdot(1-z)^{a} + o((1-z)^{a}),\quad \mbox{ as }z\uparrow 1,\label{exmp-beta(e)-X(de)}
\end{eqnarray}
where $\mathcal{Q}_2(s)$ is a polynomial of degree $\lfloor a\rfloor-1$, and $h_2$ is a real number.

Substituting (\ref{exmp-beta(e)-X(de)}) into (\ref{exmp-Kcirc}), we get, as $z\uparrow 1$,
\begin{eqnarray}
K^{\circ}(z)=
\frac {1} {1-(1-z)\mathcal{Q}_3(1-z) +\frac {\rho} {1-\rho}\Gamma (2-a)\left(b\lambda\cdot\frac {1-z} {1-q}\right)^{a-1} - h_3\cdot(1-z)^{a} + o((1-z)^{a})},\label{exmp-Kcirc-22}
\end{eqnarray}
where $\mathcal{Q}_3(s)$ is a polynomial of degree $\lfloor a\rfloor-1$, and $h_3$ is a real number.

Applying $1/(1-x) =1 + x  + x^2 +\cdots$ to (\ref{exmp-Kcirc-22}), we obtain
\begin{eqnarray}
K^{\circ}(z)&=&1+(1-z)\mathcal{Q}_4(1-z) -\frac {\rho} {1-\rho}\Gamma (2-a)\left(b\lambda\cdot\frac {1-z} {1-q}\right)^{a-1} + h_4\cdot(1-z)^{a} \nonumber\\
&& +\sum_{k\in S} g_{k}\cdot(1-z)^{k(a-1)} + o((1-z)^{a}),\quad \mbox{ as }z\uparrow 1,\label{exmp-Kcirc-33}
\end{eqnarray}
where $\mathcal{Q}_4(s)$ is a polynomial of degree $\lfloor a\rfloor-1$, $S=\{k\ |\ k\ge 2,\ k(a-1)\le a\}$, $h_4$ and $g_k$'s are real numbers.

Similarly, by (\ref{exmp-beta(s)-asym}) and (\ref{exmp(1-X(z))^(a)-asym}), we have
\begin{eqnarray}
\beta(\lambda-\lambda X(z))&=&1 + \mathcal{R}_1(1-z) +h_5\cdot (1-z)^{a}
+ o((1-z)^{a}),\quad \mbox{ as }z\uparrow 1.\label{exmp-beta(lambda-lambda X(z))-44}
\end{eqnarray}
where $\mathcal{R}_1(s)$ is a polynomial of degree $\lfloor a\rfloor-1$, and $h_5$ is a real number.

It follows from (\ref{EzL{infty}}), (\ref{exmp-Kcirc-33}) and (\ref{exmp-beta(lambda-lambda X(z))-44}) that
\begin{eqnarray}
Ez^{L_{\infty}}&=&1+\mathcal{R}_2(1-z) -\frac {\rho} {1-\rho}\Gamma (2-a)\left(b\lambda\cdot\frac {1-z} {1-q}\right)^{a-1} + r_2\cdot(1-z)^{a} \nonumber\\
&& +\sum_{k\in S} g_{k}\cdot(1-z)^{k(a-1)} + o((1-z)^{a}),\quad \mbox{ as }z\uparrow 1,\label{examp-EzLinfty-21}
\end{eqnarray}
where $\mathcal{R}_2(s)$ is a polynomial of degree $\lfloor a\rfloor-1$, and $r_2$ is a real number.

Since $Ez^{L_{\infty}}$ is analytic in $\mathbb{C}\backslash [1,\infty)$, which implies the $\Delta$-analyticity of $Ez^{L_{\infty}}$ at $1$.
Applying properties~(i)--(iii) of Lemma~\ref{lemma:FS} to (\ref{examp-EzLinfty-21}), we obtain the following result.
\begin{lemma}
For the $M^X/G/1$ model with the Pareto service time and geometric batch size, we have,
as $j\to\infty$,
\begin{eqnarray}
P\{L_{\infty}=j\}&=&\frac {1} {1-\rho}\left(\frac {b\lambda} {1-q}\right)^{a} j^{-a}+ l_1 j^{-a-1} +\sum_{k\in S} l_{k}j^{-k(a-1)-1} + o(j^{-a-1}),\label{exmp-PLinfty=j-last}
\end{eqnarray}
where we have used the fact $-\frac {\rho} {1-\rho}\frac{\Gamma (2-a)} {\Gamma(1-a)} \big(\frac {b\lambda} {1-q}\big)^{a-1}=
\frac {1} {1-\rho}\big(\frac {b\lambda} {1-q}\big)^{a}$,
and $l_k$'s are real numbers.

Furthermore, $P\{L_{\infty}=j\}$ is ultimately decreasing, which is a directly consequence of applying Lemma~\ref{exmp-lemma-1} to (\ref{exmp-PLinfty=j-last}).
\end{lemma}

\begin{remark}
(i) Let $q\to 0$. The batch size $X$ has a degenerative distribution $P\{X= 1\}=1$. So, for the $M/G/1$ queue with Pareto service, $P\{L_{\infty}=j\}$ is ultimately decreasing. This result can also be directly verified from an exact formula for $P\{L_{\infty}=j\}$ (see (29) and (39) in \cite{Ramsay:2007}).
(ii) For the $M/G/1$ queue with Pareto service, $P\{L_{\infty}=j\}\sim\frac {(b\lambda)^{a} } {1-\rho} j^{-a}$ (by (\ref{exmp-PLinfty=j-last}) with $q= 0$), it follows that $P\{L_{\infty}>j\}\sim\frac {(b\lambda)^{a}} {(1-\rho)(a-1)} j^{-a+1}=\frac {\rho} {1-\rho} (j/b\lambda)^{-a+1}\sim \frac {1} {1-\rho}P\{B^{(e)}>j/\lambda\}$ as $j\to\infty$, which is consistent with (1.4) in \cite{Asmussen-Klupperlberg-Sigman:1999}.
\end{remark}

%\begin{remark}
%Although in the above example the batch size is assumed to have a geometric distribution, the technical approach may allows one to handle a more general %light-tailed batch size distribution, i.e., one can assume that $X(z)$ is analytic at $1$, which implies $P\{X=n\}$ decreasing exponentially fast (e.g., %Poisson, geometric, and negative binomial distributions), because the analyticity at $1$ ensures that for $z$ in some open disc centered at $1$, $X(z)$ can be %expanded as a convergent power series: $X(z)=1+\sum_{n=1}^{\infty}c_n(z-1)^n$, which plays a role as (\ref{exmp-X(z)-asym}), and the rest of the proof can be %done similarly.
%\end{remark}

\subsection{Example B: Pareto service time and batch size heavier than service time}

In this subsection, we apply the same procedure, used in the previous subsection, to another example for the case that the tail of the batch size distribution is heavier than that of the service time distribution. Specifically, we assume that
the service time $B$ has a Pareto distribution with shape parameter  $1<a<2$ and scale parameter $b>0$, i.e.,
$B(x)=1-\left(1+ x/b\right)^{-a}$ for $x\ge 0$, and $\beta_1=E(B)=b/(a-1)$. For the batch size $X=X_0+1$, we assume that
$X_0(z)=E(z^{X_0})=\tau(1-z)$, where $\tau(s)$ is the LST of the continuous-time distribution of another Pareto r.v. $T$ with $P\{T\le t\}=1-\left(1+ t/v\right)^{-u}$ for $t>0$, $v>0$ and $1<u<2$. So, $\tau_1= E(T)= v/(u-1)$. In addition, we assume that  $u<a$.
Clearly, $X_0$ can be regarded as the number of Poisson arrivals at rate $1$ within a random time $T$. Since $P\{X> j\}\sim P\{X_0> j\}\sim P\{T> j\}$ as $j\to\infty$ (see Lemma A.1), the condition $u<a$ implies that $X$ has a tail heavier than $B$. Moreover, $E(X_0)= E(T)=\tau_1$, so $\chi_1= E(X)=1+ \tau_1$.

Once again, it can be proved that $Ez^{L_{\infty}}$ given in (\ref{EzL{infty}}) and (\ref{exmp-Kcirc}) can be analytically continued to $\mathbb{C}\backslash [1,\infty)$, which implies the $\Delta$-analyticity at 1 (a detailed proof is given in Appendix~\ref{app-B}). Therefore, the results in Lemma~\ref{lemma:FS} can be employed for this example.

In a way similar to (\ref{exmp-beta(s)-exact}) and (\ref{exmp-beta(s)-asym}), we have, for $s\in \mathbb{D}$, as $s\to 0$,
\begin{eqnarray}
\tau(s)&=&1- \tau_1 s -w_1 s^2-\Gamma (1-u)\big[(vs)^u + (vs)^{u+1}\big]+ o(s^{u+1}),\label{exmp2-delta(s)-asym}
\end{eqnarray}
where $w_1$ is a real number.
Note that $1-X(z)=1-z\tau(1-z)=1-\tau(1-z)+(1-z)\tau(1-z)$. Therefore, as $z\uparrow 1$,
\begin{equation}
    1-X(z) = \chi_1(1-z)\big[1+w_2(1-z) +(v^u/\chi_1)\Gamma (1-u) (1-z)^{u-1} +w_3(1-z)^{u}+o((1-z)^{u})\big],\label{exmp2-alpha(s)-asym}
\end{equation}
where $w_2$ and $w_3$ are real numbers. It follows that
\begin{eqnarray}
\big(1-X(z)\big)^a&=&o((1-z)^{u}),\label{exmp2(1-X(z))^(a)-asym}\\
\big(1-X(z)\big)^{a-1}&=&\sum_{k\in S}  g_{k}\cdot(1-z)^{a-1+k(u-1)} +o((1-z)^{u}),\label{exmp2(1-X(z))^(a-1)-asym}
\end{eqnarray}
where $S=\{k\ |\ k\ge 0,\ a-1+k(u-1)\le u\}$, and $g_k$'s are real numbers.

Substituting (\ref{exmp2-alpha(s)-asym}), (\ref{exmp2(1-X(z))^(a)-asym}) and (\ref{exmp2(1-X(z))^(a-1)-asym}) into (\ref{exmp-beta(e)(s)-exact}) leads to
\begin{eqnarray}
\beta^{(e)}(\lambda-\lambda X(z))&=&1 + w_4(1-z) +\sum_{k\in S_1} e_{k}\cdot(1-z)^{a-1+k(u-1)} +o((1-z)^{u}),\label{exmp2-beta(e)(lambda-lambdaX(z))}
\end{eqnarray}
where $w_4$ and $e_k$'s are real numbers.
In addition, by (\ref{exmp2-alpha(s)-asym}), we have
\begin{eqnarray}
X^{(de)}(z)=1+w_2(1-z) +(v^u/\chi_1)\Gamma (1-u) (1-z)^{u-1}
+ w_3(1-z)^{u}+ o((1-z)^{u}).\label{exmp2-X(de)(z)-asym}
\end{eqnarray}
It follows from (\ref{exmp2-beta(e)(lambda-lambdaX(z))}) and (\ref{exmp2-X(de)(z)-asym}) that
\begin{eqnarray}
\beta^{(e)}(\lambda-\lambda X(z))X^{(de)}(z)&=&1+w_5(1-z) +(v^u/\chi_1)\Gamma (1-u) (1-z)^{u-1}+ w_3(1-z)^{u}\nonumber\\
&&+\sum_{k\in S_1} r_{k}\cdot(1-z)^{a-1+k(u-1)} + o((1-z)^{u}),\label{exmp2-beta(e)-X(de)}
\end{eqnarray}
where $w_5$ and $r_k$'s are real numbers.
In a way similar to obtaining (\ref{exmp-Kcirc-33}), we get
\begin{eqnarray}
K^{\circ}(z)&=&1+w_6(1-z) +\frac {\rho} {1-\rho}(v^u/\chi_1)\Gamma (1-u) (1-z)^{u-1}+ w_7(1-z)^{u}\nonumber\\
&&+\sum_{(k_1,k_2)\in S_2}g_{k_1,k_2}\cdot(1-z)^{k_1(a-1)+k_2(u-1)}
+\sum_{k\in S_3} h_{k}\cdot(1-z)^{k(u-1)}\nonumber\\
&&+ o((1-z)^{u}),\quad \mbox{ as }z\uparrow 1,\label{exmp2-Kcirc-33}
\end{eqnarray}
where $S_2=\{(k_1,k_2)\ |\ k_1\ge 1,\ k_2\ge 0,\ k_1(a-1)+k_2(u-1)<u\}$, $S_3=\{k\ |\ k\ge 2,\ k(u-1)<u\}$ and
$g_{k_1,k_2}$'s and $h_k$'s are real numbers.

Substitute (\ref{exmp2(1-X(z))^(a)-asym}) into (\ref{exmp-beta(s)-asym}) to give
\begin{eqnarray}
\beta(\lambda-\lambda X(z))&=&1 -\beta_1(1-z) + o((1-z)^{u}),\quad \mbox{ as }z\uparrow 1.\label{exmp2-beta(lambda-lambda X(z))-44}
\end{eqnarray}
It follows from (\ref{EzL{infty}}), (\ref{exmp2-Kcirc-33}) and (\ref{exmp2-beta(lambda-lambda X(z))-44}) that
\begin{eqnarray}
Ez^{L_{\infty}}&=&1+w_8(1-z) +\frac {\rho} {1-\rho}(v^u/\chi_1)\Gamma (1-u) (1-z)^{u-1}+ w_9(1-z)^{u}\nonumber\\
&&+\sum_{(k_1,k_2)\in S_2}g^*_{k_1,k_2}\cdot(1-z)^{k_1(a-1)+k_2(u-1)}
+\sum_{k\in S_3} h^*_{k}\cdot(1-z)^{k(u-1)}\nonumber\\
&&+ o((1-z)^{u}),\quad \mbox{ as }z\uparrow 1,\label{exmp2-Ezinfty-33}
\end{eqnarray}
where $w_8$, $w_9$, $g^*_{k_1,k_2}$'s and $h^*_k$'s are real numbers.

Since $P\{L_{\infty}=j\}$ is analytic in $\mathbb{C}\backslash [1,\infty)$, which implies the $\Delta$-analyticity at 1, we are able to
apply Lemma~\ref{lemma:FS} to (\ref{exmp2-Ezinfty-33}) to obtain the following results.
\begin{lemma} For the $M^X/G/1$ queue with Pareto batch size and Pareto service time, if the tail of the batch size is heavier than the tail of the service time, we have
\begin{eqnarray}
P\{L_{\infty}=j\}&=&\frac {\rho} {1-\rho}(v^u/\chi_1)\Gamma (1-u) j^{-u}+ w_{10}j^{-u-1}
+\sum_{(k_1,k_2)\in S_2} l_{k_1,k_2}\cdot j^{-k_1(a-1)-k_2(u-1)-1}\nonumber\\
&&+\sum_{k\in S_3}l_{k}\cdot j^{-k(u-1)-1} + o(j^{-u-1}),\quad \mbox{ as }j\to\infty,\label{exmp2-PLinfty=j-last}
\end{eqnarray}
where we have used the fact
$\frac {\rho} {1-\rho}\frac {v^u} {\chi_1} j^{-u}=\frac {\lambda\beta_1} {1-\rho}  (j/v)^{-u}$,
and $w_{10}$, $l_{k_1,k_2}$'s and $l_k$'s are real numbers.

Furthermore, $P\{L_{\infty}=j\}$ is ultimately decreasing, which is a directly consequence of applying Lemma~\ref{exmp-lemma-1} to (\ref{exmp2-PLinfty=j-last}).
\end{lemma}

\section*{Acknowledgments}
We acknowledge the valuable comments/suggestions made by anonymous reviewers and the Associate Editor, which have led to this final version with significantly improved quality. This work was supported in part by the National Natural Science Foundation of China (Grant No. 71571002),
%the Natural Science Foundation of the Anhui Higher Education Institutions of China (No. KJ2017A340),
%the Research Project of Anhui Jianzhu University,
and a Discovery Grant from the Natural Sciences and Engineering Research Council of Canada (NSERC).

\appendix

\section{Collection of concepts and results}
\label{app-A}

%\begin{definition}[e.g., see Bingham, Goldie and Teugels~\cite{Bingham:1989}]
\begin{definition}[e.g., see Bingham et al.~\cite{Bingham:1989}]
\label{Definition 3.1}
A measurable function $U:(0,\infty)\to (0,\infty)$ is regularly varying at $\infty$ with index $\sigma\in(-\infty,\infty)$, denoted by $U\in R_{\sigma}$, iff $\lim_{x\to\infty}U(tx)/U(x)=t^{\sigma}$ for all $t>0$. If $\sigma=0$ we call $U$ slowly varying, i.e., $\lim_{x\to\infty}U(tx)/U(x)=1$ for all $t>0$.
\end{definition}
%
%\begin{definition}[e.g., see Foss, Korshunov and Zachary~\cite{Foss2011}]
\begin{definition}[e.g., see Foss et al.~\cite{Foss2011}]
\label{Definition 3.2}
A distribution $F$ on $(0,\infty)$ belongs to the class of the subexponential distributions, denoted by $F\in \mathcal S$, if $\lim_{x\to\infty}(1-F^{(2)}(x))/(1-F(x))=2$, where $F^{(2)}$ denotes the second convolution of $F$.
\end{definition}
%

%\begin{lemma}[Asmussen, Klupperlberg and Sigman~\cite{Asmussen-Klupperlberg-Sigman:1999}, or Foss and Korshunov~\cite{Foss-Korshunov:2000}]
\begin{lemma}[Asmussen et al.~\cite{Asmussen-Klupperlberg-Sigman:1999}, or Foss and Korshunov~\cite{Foss-Korshunov:2000}]
\label{Lemma 3.1}
Assume that $N_t$ is a Poisson process with rate $\lambda>0$, and $T>0$ is a rv
independent of $N_t$ with tail $P\{T>x\}$ heavier than $e^{-\sqrt{x}}$. Then $P(N_T>j)\sim P\{T>j/\lambda\},\ j\to\infty$.
\end{lemma}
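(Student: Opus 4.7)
The plan is to condition on $T$ and write
\[
P(N_T > j) = \int_0^\infty P(N_t > j)\, dF_T(t),
\]
then exploit the concentration of the Poisson$(\lambda t)$ distribution around its mean $\lambda t$ at scale $\sqrt{\lambda t}$. Intuitively, the function $t \mapsto P(N_t > j)$ transitions from near $0$ to near $1$ across an interval of length $O(\sqrt{j}/\lambda)$ centred at $t = j/\lambda$, so the asymptotic equivalence should follow as soon as one verifies that the tail of $T$ is insensitive to shifts of this order, which is precisely the content of the assumption that $P\{T > x\}$ is heavier than $e^{-\sqrt{x}}$.

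For the lower bound, I would choose an auxiliary sequence $\omega(j) \to \infty$ slowly enough and estimate
\[
P(N_T > j) \ge \Bigl(\inf_{t \ge (j + \omega(j)\sqrt{j})/\lambda} P(N_t > j)\Bigr) \cdot P\Bigl\{T > \frac{j + \omega(j)\sqrt{j}}{\lambda}\Bigr\}.
\]
The infimum converges to $1$ by Chebyshev's inequality (or a local central limit argument) since for such $t$ the mean of $N_t$ exceeds $j$ by at least $\omega(j)\sqrt{j}$, while its standard deviation is of order $\sqrt{j}$. The tail factor is then asymptotically $P\{T > j/\lambda\}$ provided the distribution of $T$ is insensitive to the perturbation $\omega(j)\sqrt{j}/\lambda$. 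For the upper bound I would split the integral at $t_j = (j - \omega(j)\sqrt{j})/\lambda$:
\[
P(N_T > j) \le \sup_{0 < t \le t_j} P(N_t > j) + P\{T > t_j\}.
\]
A standard Chernoff estimate for the Poisson distribution bounds the supremum by $\exp(-c\,\omega(j)^2)$ for some $c>0$, and the second term is again $\sim P\{T > j/\lambda\}$ under the same insensitivity property.

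The main obstacle is to balance the two opposing requirements on $\omega(j)$: it must grow fast enough so that $\exp(-c\,\omega(j)^2) = o(P\{T > j/\lambda\})$, yet slowly enough that $P\{T > j/\lambda \pm \omega(j)\sqrt{j}/\lambda\} \sim P\{T > j/\lambda\}$. The hypothesis that the tail is heavier than $e^{-\sqrt{x}}$, interpreted as $-\log P\{T > x\} = o(\sqrt{x})$, gives exactly the necessary room: it allows $\omega(j)^2$ to dominate $-\log P\{T > j/\lambda\}$ while still keeping $\omega(j) = o(j^{1/4})$, so that the relative perturbation $\omega(j)/\sqrt{j} \to 0$. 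The tail-insensitivity step can then be verified via an argument of the type developed for $h$-insensitive distributions in Foss, Korshunov and Zachary~\cite{Foss2011}. Assembling the two bounds and letting $\omega(j) \to \infty$ yields the desired equivalence $P(N_T > j) \sim P\{T > j/\lambda\}$.
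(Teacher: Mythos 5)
The paper itself does not prove this lemma: it is quoted verbatim from Asmussen et al.\ and Foss--Korshunov as a literature result in the appendix, so your proposal has to be judged on its own terms. Your architecture --- condition on $T$, use the $\sqrt{j}$-scale concentration of the Poisson distribution, and truncate at $t=(j\pm\omega(j)\sqrt{j})/\lambda$ --- is the right starting point, and for the only case this paper actually needs ($T=B$ or $B^{(e)}$ with a regularly varying tail, so that $-\log P\{T>x\}=O(\log x)$) your argument does close: take for instance $\omega(j)=\log j$, then requirement (a), $e^{-c\,\omega(j)^2}=o(P\{T>j/\lambda\})$, is immediate, and requirement (b) is insensitivity to a multiplicative perturbation $1+o(1)$, which holds for regularly varying tails by the uniform convergence theorem.

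As a proof of the lemma as stated, however, the balancing step has a genuine gap. You assert that (b), namely $P\{T>(j\pm\omega(j)\sqrt{j})/\lambda\}\sim P\{T>j/\lambda\}$, follows because the relative perturbation $\omega(j)/\sqrt{j}$ tends to $0$; that implication is false for general tails heavier than $e^{-\sqrt{x}}$. Take $P\{T>x\}=e^{-x^{1/3}}$, which satisfies the hypothesis and for which the conclusion of the lemma is true. Requirement (a) forces $c\,\omega(j)^2>(j/\lambda)^{1/3}$, i.e.\ $\omega(j)\ge C j^{1/6}$, while $P\{T>(j+\omega\sqrt{j})/\lambda\}/P\{T>j/\lambda\}=\exp\bigl\{-\tfrac{1}{3}\lambda^{-1/3}\omega(j)\,j^{-1/6}(1+o(1))\bigr\}$, so (b) forces $\omega(j)=o(j^{1/6})$: the two requirements are incompatible, and no appeal to $h$-insensitivity rescues a single cutoff combined with a sup-bound. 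The standard repair (and the actual route in the cited papers) is to avoid the crude truncation: integrating by parts gives $P(N_T>j)=\lambda\int_0^{\infty}P\{T>t\}\,e^{-\lambda t}(\lambda t)^{j}/j!\,dt$, the average of the tail against a Gamma$(j+1,\lambda)$ density, and one then uses the Gaussian decay $e^{-u^2/2}$ of that density at $t=(j+u\sqrt{j})/\lambda$ to dominate the at-most-$e^{\varepsilon(j)|u|}$ growth (with $\varepsilon(j)\to0$) of $P\{T>(j-|u|\sqrt{j})/\lambda\}/P\{T>j/\lambda\}$ that square-root insensitivity provides. A separate, minor point: heaviness alone does not in fact imply the needed square-root insensitivity (Foss and Korshunov prove the equivalence $P(N_T>j)\sim P\{T>j/\lambda\}$ precisely for long-tailed, square-root-insensitive $T$), but that imprecision sits in the paper's statement of the lemma rather than in your argument, and is harmless for the regularly varying tails used here.
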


Note that by Assumption A1, both the service time $B$ and the equilibrium service time $B^{(e)}$ have tails heavier than $e^{-\sqrt{x}}$.

\begin{lemma}%[Grandell \cite{Grandell1997}, pp. 162-166]
\label{Lemma 3.2}
Let $N$ be a discrete non-negative integer-valued rv,
and let $\{Y_k\}_{k=1}^{\infty}$ be a sequence of non-negative, independently and identically distributed rvs.
Define $S_0\equiv 0$ and $S_n=\sum_{k=1}^n Y_k$.

\begin{description}
\item[(i)] If $P\{Y_k>x\}\sim c_Y x^{-h}L(x)$ as $x\to\infty$ and $P\{N>n\}\sim c_N n^{-h}L(n)$ as $n\to\infty$, where $h> 1$, $c_Y\ge 0$ and $c_N\ge 0$, then
\begin{eqnarray}
P\{S_N > x\}&\sim&\left(c_N \mu_Y^{h}+  c_Y\mu_N\right) x^{-h}L(x),\quad x\to \infty,
\end{eqnarray}
where $E(N)=\mu_N<\infty$ and $E(Y_k)=\mu_Y<\infty$.

\item[(ii)] If $P\{N>n\}\sim c_N n^{-h_N}L(n)$ as $n\to\infty$, where $0\le h_N<1,\ c_N\ge 0$, and $E(Y_k)=\mu_Y<\infty$, then
\begin{eqnarray}
P\{S_N > x\}&\sim& c_N (x/\mu_Y)^{-h_N}L(x),\quad x\to \infty.
\end{eqnarray}

%\item[(iii)] If $P\{Y_k>x\}\sim c_Y x^{-h_Y}L(x)$ as $x\to\infty$, where $0\le h_Y<1,\ c_Y\ge 0$, and $E(N)=\mu_N<\infty$,  then
%\begin{eqnarray}
%P\{S_N > x\}&\sim& c_Y \mu_N  x^{-h_Y}L(x),\quad x\to \infty.
%\end{eqnarray}

\item[(iii)] If $P\{N>n\}\sim c_N n^{-1}L(n)$ as $n\to\infty$, where $c_N\ge 0$, and $x^b P\{Y_k>x\}\le c<\infty$ for some $b>1$, then
\begin{eqnarray}
P\{S_N > x\}&\sim& c_N (x/\mu_Y)^{-1}L(x),\quad x\to \infty,
\end{eqnarray}
where $E(Y_k)=\mu_Y<\infty$.
\end{description}
\end{lemma}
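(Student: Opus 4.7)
The plan is to condition on $N$ and exploit the classical ``principle of a single big jump'' together with concentration of $S_n$ around its mean when $n$ is large. Writing
\[
P\{S_N > x\} = \sum_{n=0}^{\infty} P\{N=n\} P\{S_n > x\},
\]
I would split the summation into three ranges: a ``bounded'' regime $n \le A$ (for a large constant $A$); a ``moderate'' regime $A < n \le (1-\varepsilon)x/\mu_Y$; and a ``large'' regime $n > (1-\varepsilon)x/\mu_Y$. In the bounded regime I use the subexponential single-big-jump estimate $P\{S_n > x\} \sim n\,P\{Y_1 > x\}$; in the moderate regime I use Chebyshev/Fuk--Nagaev bounds to show the contribution is negligible; in the large regime I use a conditional law of large numbers, $P\{S_n > x\} \to 1$ as soon as $n\mu_Y$ passes $(1+o(1))x$, so the contribution behaves like $P\{N > x/\mu_Y\}$.

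For Part (i), both $N$ and the $Y_k$'s have regularly varying tails of the \emph{same} index $h>1$, so both regimes contribute at the same order $x^{-h}L(x)$. From the bounded regime, $\sum_{n \le A} P\{N=n\}\cdot n P\{Y_1>x\} \to \mu_N\, c_Y x^{-h}L(x)$ as first $x \to \infty$ and then $A \to \infty$ (finiteness of $\mu_N$ is guaranteed by $h>1$). From the large regime, Karamata and uniform convergence of $L$ give $P\{N > (1\pm\varepsilon)x/\mu_Y\} \sim c_N (x/\mu_Y)^{-h} L(x) = c_N \mu_Y^{h} x^{-h} L(x)$. Letting $\varepsilon \downarrow 0$ after matching the boundaries, the two contributions add to the claimed $(c_N \mu_Y^h + c_Y \mu_N)\,x^{-h}L(x)$. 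Negligibility of the moderate regime can be checked via a truncation argument: split $Y_k = Y_k\mathbbm{1}\{Y_k \le x\} + Y_k\mathbbm{1}\{Y_k > x\}$, bound the truncated sum's tail by Chebyshev, and handle the untruncated part by union bound (which yields an $n P\{Y_1>x\}$ term already absorbed in the bounded regime).

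For Part (ii), since $0 \le h_N < 1$ the tail of $N$ is genuinely heavier than anything the $Y_k$'s can produce (we only assume $\mu_Y<\infty$), so one expects $P\{S_N>x\} \sim P\{N > x/\mu_Y\}$. For the upper bound I would use, for any $\varepsilon>0$,
\[
P\{S_N > x\} \le P\{N > (1-\varepsilon)x/\mu_Y\} + P\{S_N > x,\ N \le (1-\varepsilon)x/\mu_Y\},
\]
and show the second term is $o(x^{-h_N} L(x))$ by splitting $N$ into $\{N \le A\}$ (trivially bounded by $P\{S_A>x\}$, which decays polynomially faster) and $\{A < N \le (1-\varepsilon)x/\mu_Y\}$ (a Fuk--Nagaev inequality together with regular variation of $P\{N>\cdot\}$ gives the required decay). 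The matching lower bound follows by replacing $(1-\varepsilon)$ with $(1+\varepsilon)$ and using the SLLN to say $P\{S_n > x \mid N = n\} \to 1$ whenever $n\mu_Y \ge (1+\varepsilon)x$. Sending $\varepsilon \downarrow 0$ and using regular variation of $L$ finishes the proof.

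For Part (iii) the only novelty is the boundary index $h_N = 1$ where $E(N)$ need not be finite; here $\mu_Y < \infty$ alone is not quite enough for the Chebyshev-based concentration used in (ii), which is why the extra hypothesis $x^b P\{Y_k>x\} \le c$ (some $b>1$) is imposed, ensuring $E(Y_k^{1+\delta}) < \infty$ for some $\delta>0$. I would then reuse the argument of (ii) with a sharper deviation bound (Markov applied to $E(Y_k^{1+\delta})$), which suffices to kill the ``moderate'' regime at index $1$. The hard part is the matching of constants and the passage $\varepsilon \downarrow 0$ in (i), where both the tail-of-$Y$ and tail-of-$N$ contributions are of exactly the same order and must combine additively without double counting; this requires careful bookkeeping of the boundary $n \approx x/\mu_Y$ via uniform convergence of the slowly varying $L$. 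These results are essentially Stam's theorem \cite{Stam:1973} and its refinements, and I would cite those directly once the decomposition is in place, only proving the additive form of (i) by hand since that is the slightly nonstandard combination needed in this paper.
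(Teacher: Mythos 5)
Your proposal is essentially sound, but it takes a different route from the paper: the paper does not prove Lemma~\ref{Lemma 3.2} at all — it simply quotes it, attributing parts (i) and (ii) to Corollaries 8.1 and 8.2 of Grandell~\cite{Grandell1997} (pp.~163--165) and part (iii) to Lemma 2.8 of Stam~\cite{Stam:1973}, exactly the sources you name at the end. What you sketch is the standard self-contained derivation behind those citations: condition on $N$, use the single-big-jump estimate for small $n$, truncation plus Chebyshev/Fuk--Nagaev for the intermediate range, and the law of large numbers for $n$ beyond $(1\pm\varepsilon)x/\mu_Y$, with the two contributions $c_Y\mu_N$ and $c_N\mu_Y^{h}$ adding in case (i). You also correctly identify why the extra hypothesis $x^bP\{Y_k>x\}\le c$, $b>1$, is needed at the boundary index $h_N=1$ (it supplies $E(Y_k^{1+\delta})<\infty$ so the moderate regime can still be killed when $E(N)$ may be infinite). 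Two small points of bookkeeping if you were to write this out in full: in (i) the ``moderate'' regime $A<n\le(1-\varepsilon)x/\mu_Y$ is not negligible for fixed $A$ — it carries part of the $c_Y\mu_N$ constant — so you must let $x\to\infty$ first and then $A\to\infty$ (and, in (ii)--(iii), let the truncation level $\epsilon x$ have $\epsilon\downarrow 0$ after $x\to\infty$); and since the lemma allows $c_Y=0$ or $c_N=0$ (the paper's convention in Remark~\ref{Remark 3.2a}), the asymptotic equivalence $P\{S_n>x\}\sim nP\{Y_1>x\}$ is not available in those degenerate cases and should be replaced by the corresponding one-sided upper bounds, which is all the argument needs. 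With those caveats your plan would yield a complete proof; the paper's approach buys brevity by outsourcing it, yours buys a self-contained and slightly more transparent statement of exactly which moment and tail conditions drive each part.
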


In the above Lemma~\ref{Lemma 3.2}, Parts (i) and (ii) are directly from Corollary 8.1 and Corollary 8.2  in Grandell\cite{Grandell1997}, pp. 163-165, and Part (iii) is due to Lemma 2.8 in Stam~\cite{Stam:1973}, p. 315.

Lemma~\ref{Lemma 3.3} given below is the discrete version of Karamata's Theorem and Monotone Density Theorem.
%\begin{lemma}[Bingham, Goldie and Teugels~\cite{Bingham:1989}, p.28 and p.39]
\begin{lemma}[Embrechts et al.~\cite{Embrechts1997}, pp. 567-568]
\label{Lemma 3.3}
Let $\{q(j)\}_{j=0}^{\infty}$ be a nonnegative sequence, and $b>1$. If $q(j)\sim j^{-b}L(j)$ as $j\to\infty$, then
$\sum_{k=j+1}^{\infty}q(k)\sim \displaystyle\frac 1 {b-1} j^{-b+1}L(j)$ as $j\to\infty$. Conversely, if $\sum_{k=j+1}^{\infty}q(k)\sim\displaystyle\frac 1 {b-1} j^{-b+1}L(j)$ as $j\to\infty$
and $\{q(j)\}_{j=0}^{\infty}$ is ultimately monotonic, then $q(j)\sim j^{-b}L(j)$ as $j\to\infty$.
\end{lemma}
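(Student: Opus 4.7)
The plan is to establish this result as the discrete counterpart of the classical Karamata theorem (forward direction) and Karamata's monotone density theorem (converse direction). Throughout, I will rely on the defining property of a slowly varying function $L$, namely $L(tx)/L(x)\to 1$ uniformly on compact $t$-intervals, and on Potter's bounds, which guarantee that for any $\delta>0$ there are constants so that $L(y)/L(x)\le C\max((y/x)^{\delta},(y/x)^{-\delta})$ for all sufficiently large $x,y$.

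For the forward direction, assume $q(j)\sim j^{-b}L(j)$ with $b>1$. The target expression $\tfrac{1}{b-1}j^{-b+1}L(j)$ is exactly the asymptotic value of the continuous integral $\int_{j}^{\infty} t^{-b}L(t)\,dt$ by the continuous Karamata theorem (Bingham, Goldie and Teugels~\cite{Bingham:1989}, p.~28). So the step is to show
\begin{equation}
\sum_{k=j+1}^{\infty} q(k) \;=\; \int_{j}^{\infty} t^{-b}L(t)\,dt\,(1+o(1)),\qquad j\to\infty. \notag
\end{equation}
Fix small $\varepsilon>0$. For $k$ in any block $[\,j(1+\ell\varepsilon),\,j(1+(\ell+1)\varepsilon)\,]$ with $\ell\ge 0$, the asymptotic $q(k)\sim k^{-b}L(k)$ combined with the uniform convergence theorem for slowly varying functions shows that $q(k)$ is asymptotically trapped between $(1\pm o(1))k^{-b}L(k)$ uniformly in the block, and the same block comparison applies to the integrand $t^{-b}L(t)$. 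Summation over blocks, together with Potter's bounds to make the tail absolutely summable and to justify the interchange of limit and summation, gives the desired asymptotic equivalence between the sum and the integral, and hence the claim.

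For the converse, assume $\overline Q(j):=\sum_{k=j+1}^\infty q(k)\sim \tfrac{1}{b-1}j^{-b+1}L(j)$ and that $\{q(j)\}$ is ultimately non-increasing (the argument for ultimately non-decreasing is similar but with inequalities reversed). For any fixed $\varepsilon\in(0,1)$ and $j$ large, monotonicity gives the sandwich
\begin{equation}
\lfloor \varepsilon j\rfloor \cdot q(j+\lfloor \varepsilon j\rfloor) \;\le\; \overline Q(j)-\overline Q(j+\lfloor\varepsilon j\rfloor) \;\le\; \lfloor \varepsilon j\rfloor \cdot q(j). \notag
\end{equation}
Using the tail assumption and the regular variation identity $(j+\lfloor\varepsilon j\rfloor)^{-b+1}L(j+\lfloor\varepsilon j\rfloor)\sim (1+\varepsilon)^{-b+1}j^{-b+1}L(j)$, one obtains
\begin{equation}
\overline Q(j)-\overline Q(j+\lfloor\varepsilon j\rfloor) \;\sim\; \frac{1-(1+\varepsilon)^{-b+1}}{b-1}\,j^{-b+1}L(j). \notag
\end{equation}
Dividing by $\varepsilon j$ and using the elementary identity $\lim_{\varepsilon\downarrow 0}\bigl(1-(1+\varepsilon)^{-b+1}\bigr)/\varepsilon = b-1$ produces both
\begin{equation}
\liminf_{j\to\infty} \frac{q(j)}{j^{-b}L(j)} \;\ge\; \frac{1-(1+\varepsilon)^{-b+1}}{(b-1)\varepsilon},\qquad
\limsup_{j\to\infty}\frac{q(j+\lfloor\varepsilon j\rfloor)}{j^{-b}L(j)}\;\le\;\frac{1-(1+\varepsilon)^{-b+1}}{(b-1)\varepsilon}. \notag
\end{equation}
Sending $\varepsilon\downarrow 0$ in each bound, and absorbing the factor $(1+\varepsilon)^{-b}\to 1$ that appears when one switches between $j$ and $j+\lfloor\varepsilon j\rfloor$ in the upper bound (using regular variation of $j^{-b}L(j)$), pins down $q(j)\sim j^{-b}L(j)$.

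The main obstacle is keeping the error terms uniform when handling the slowly varying factor $L$: in the forward direction one needs uniformity of $L(k)/L(j)\to 1$ over the growing range $k>j$, which is supplied by Potter's bounds together with a block decomposition; in the converse, one needs the limit $\varepsilon\downarrow 0$ to commute with the $j\to\infty$ asymptotics, which is why the sandwich must be written with explicit ratios and why monotonicity of $q$ is essential (without it, $q$ could oscillate wildly while the tail sum remained smooth).
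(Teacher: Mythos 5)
Your proposal is correct, but be aware that the paper itself offers no proof of Lemma~\ref{Lemma 3.3}: it is imported verbatim from Embrechts et al.~\cite{Embrechts1997} (pp.~567--568) and described only as the discrete version of Karamata's theorem and the monotone density theorem, so there is no in-paper argument to match. What you have written is essentially a self-contained reconstruction of those two classical facts: the forward direction by comparing $\sum_{k>j}q(k)$ with $\int_j^\infty t^{-b}L(t)\,dt$ and invoking the continuous Karamata theorem of \cite{Bingham:1989}, and the converse by the standard monotone-density sandwich $\lfloor\varepsilon j\rfloor\,q(j+\lfloor\varepsilon j\rfloor)\le\overline Q(j)-\overline Q(j+\lfloor\varepsilon j\rfloor)\le\lfloor\varepsilon j\rfloor\,q(j)$ followed by $\varepsilon\downarrow 0$; this is exactly the route the cited source takes, so your proof buys a self-contained statement at the cost of redoing textbook material. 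Two details are worth tightening but are not gaps: (i) in the forward direction the block decomposition with Potter's bounds is heavier than needed, since $q(k)=(1+o(1))k^{-b}L(k)$ is a single limit in $k$; it suffices to compare $\sum_{k>j}k^{-b}L(k)$ with the integral term by term, using the uniform convergence theorem for the regularly varying function $t^{-b}L(t)$ on the ratio interval $[1,2]$ covering $t\in[k,k+1]$; (ii) in the converse, the map $j\mapsto j+\lfloor\varepsilon j\rfloor$ skips integers, so the limsup bound obtained along that subsequence must be transferred to all indices $n$ by one further use of the monotonicity of $q$ (pick $j$ with $j+\lfloor\varepsilon j\rfloor\le n\le (1+\varepsilon)(j+1)$ and bound $q(n)\le q(j+\lfloor\varepsilon j\rfloor)$), and the ``ultimately non-decreasing'' case you mention is in fact vacuous, since a summable, ultimately non-decreasing nonnegative sequence must eventually vanish, contradicting the assumed tail asymptotics with $L>0$. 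With those small repairs your argument is a complete and valid proof of the lemma.
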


In the following lemma, the symbol ``$F_1*F_2$" stands for the convolution of $F_1$ and $F_2$.
%\begin{lemma}[Foss, Korshunov and Zachary~\cite{Foss2011}, p.48]
\begin{lemma}[Foss et al.~\cite{Foss2011}, p.48]
\label{Lemma 3.4new}
Suppose that $F(x)\in\mathcal S$.
\begin{description}
\item[(i)] If $1-G(x)=o(1-F(x))$ as $x\to\infty$, then $F*G\in\mathcal S$ and $1-F*G(x)\sim 1-F(x)$.
\item[(ii)] If $(1-G_i(x))/(1-F(x))\to c_i$ as $x\to\infty$ for some $c_i\ge 0$, i=1,2, then
$(1-G_1*G_2(x))/(1-F(x))\to c_1+c_2$ as $x\to\infty$.
\end{description}
\end{lemma}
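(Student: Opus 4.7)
The plan is to prove both parts from the elementary convolution identity
\[
\overline{F*G}(x) = \overline{F}(x) + \int_0^x \overline{G}(x-u)\,dF(u),
\]
combined with two standard consequences of $F\in\mathcal{S}$ that I would record first: (a) long-tailedness, $\overline{F}(x-y)\sim\overline{F}(x)$ for every fixed $y$, uniformly on compact $y$-sets (derived from $\overline{F^{*2}}(x)\sim 2\overline{F}(x)$ by elementary bounds on the indicator decomposition of $\{X_1+X_2>x\}$); and (b) the defining relation $\overline{F^{*2}}(x)\sim 2\overline{F}(x)$.

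For Part (i), fix $\varepsilon>0$ and pick $M$ so that $\overline{G}(y)\le\varepsilon\overline{F}(y)$ for all $y\ge M$. Split $I(x):=\int_0^x\overline{G}(x-u)\,dF(u)$ at $u=x-M$. On $[0,x-M]$ one has $x-u\ge M$, so this piece is bounded by $\varepsilon\int_0^{x-M}\overline{F}(x-u)\,dF(u)\le\varepsilon\,\overline{F^{*2}}(x)\sim 2\varepsilon\,\overline{F}(x)$. On $[x-M,x]$ the integrand is $\le 1$, so that piece is bounded by $F(x)-F(x-M)=\overline{F}(x-M)-\overline{F}(x)=o(\overline{F}(x))$ by long-tailedness. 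Letting $\varepsilon\downarrow 0$ gives $I(x)=o(\overline{F}(x))$, hence $\overline{F*G}(x)\sim\overline{F}(x)$. To upgrade to $F*G\in\mathcal{S}$, I would compute $\overline{(F*G)^{*2}}(x)$ by the same split-and-bound scheme, using the tail equivalence just proved together with $\overline{F^{*2}}\sim 2\overline{F}$.

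For Part (ii), I would use the max--sum identity for independent $X_i\sim G_i$:
\[
\overline{G_1*G_2}(x)=\overline{G_1}(x)+\overline{G_2}(x)-\overline{G_1}(x)\overline{G_2}(x)+P\{X_1\le x,\,X_2\le x,\,X_1+X_2>x\}.
\]
Divided by $\overline{F}(x)$, the first three terms tend to $c_1+c_2$ (the cross product is $o(\overline{F}(x))$ since each $\overline{G_i}(x)\to 0$). The remaining joint probability equals $\int_0^x[\overline{G_1}(x-y)-\overline{G_1}(x)]\,dG_2(y)$, and I would show it is $o(\overline{F}(x))$ by the same two-region splitting as in Part~(i), tracking $\overline{G_i}$ through its relative tail behaviour against $\overline{F}$ rather than against itself.

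The main obstacle is controlling the integrand when $y$ is close to $x$: because $G_i$ is \emph{not} assumed long-tailed in its own right, one cannot directly invoke $\overline{G_i}(x-M)\sim\overline{G_i}(x)$. The workaround, and the key technical point of the proof, is to factor
\[
\frac{\overline{G_i}(x-M)}{\overline{F}(x)}=\frac{\overline{G_i}(x-M)}{\overline{F}(x-M)}\cdot\frac{\overline{F}(x-M)}{\overline{F}(x)}\ \longrightarrow\ c_i\cdot 1,
\]
which lets $F$'s long-tailedness do all the uniformity work on behalf of $G_1,G_2$. This is precisely why the hypotheses in (i) and (ii) are phrased as ratios against $\overline{F}(x)$ rather than intrinsic properties of $G$.
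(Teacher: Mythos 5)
The paper does not prove this lemma at all --- it is quoted from Foss, Korshunov and Zachary (2011, p.~48) --- so your proposal is measured against the standard argument rather than an in-paper proof. Your Part (i) computation is sound: with the integrator $dF$, the bulk region $[0,x-M]$ is controlled by $\varepsilon\int_0^x\overline{F}(x-u)\,dF(u)\le\varepsilon\,\overline{F^{*2}}(x)\sim2\varepsilon\overline{F}(x)$ and the edge region by long-tailedness of $F$, which indeed follows from $F\in\mathcal S$. The identity you use in Part (ii) and the treatment of the first three terms are also correct, and your factoring $\overline{G_i}(x-M)/\overline{F}(x)=\bigl[\overline{G_i}(x-M)/\overline{F}(x-M)\bigr]\cdot\bigl[\overline{F}(x-M)/\overline{F}(x)\bigr]\to c_i$ does dispose of the region where the integration variable is within $M$ of $x$.

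The genuine gap is that the region near $x$ is not the main obstacle; the bulk region is, and ``the same two-region splitting as in Part (i)'' does not handle it. After bounding $\overline{G_1}(x-y)\le(c_1+\varepsilon)\overline{F}(x-y)$ for $x-y\ge M$, you are left with $\int_M^{x-M}\overline{F}(x-y)\,dG_2(y)$, and here the trick from Part (i) breaks down: the hypothesis compares the \emph{tails} of $G_2$ with those of $F$, not the measure $dG_2$ with $dF$, so you cannot invoke $\overline{F^{*2}}\sim2\overline{F}$ as you did when the integrator was $dF$. (Crude bounds such as $\overline{F}(x-y)\le\overline{F}(M)$ give a constant, not $o(\overline{F}(x))$.) Closing this requires an extra device absent from your sketch: either swap the order of integration (condition on the $F$-distributed variable, so the tail comparison for $G_2$ can be applied inside an integral against $dF$) or integrate by parts, and in either case invoke the uniform negligibility of the middle convolution integral, $\limsup_{x\to\infty}\int_M^{x-M}\overline{F}(x-y)\,dF(y)/\overline{F}(x)\le\overline{F}(M)\to0$ as $M\to\infty$, which is a further consequence of subexponentiality beyond the two facts you recorded (it does follow from them, but needs its own short argument). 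This is exactly the cutoff machinery used in the literature proof. The same issue silently infects your upgrade to $F*G\in\mathcal S$ in Part (i): writing $\overline{(F*G)^{*2}}=\overline{F^{*2}*G^{*2}}$ and ``repeating the scheme'' needs $\overline{G^{*2}}(x)=o(\overline{F}(x))$ and the subexponentiality of $F^{*2}$, i.e.\ either Part (ii) with $c_1=c_2=0$ or the standard tail-equivalence closure theorem --- again the middle-region estimate, not the edge estimate, is what must be supplied.
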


For proving our key result, Theorem~\ref{Theorem 3.2}, we need the following concepts and properties.
Let $\{g(j)\}_{j=0}^{\infty}$ be a discrete probability distribution with the GF $G(z)=\sum_{j=0}^{\infty}g(j)z^j$.
Denote by $\gamma_n(n\ge 0)$ the $n$th factorial moment of $\{g(j)\}_{j=0}^{\infty}$, this is,
\[
\gamma_0=1\quad\mbox{and}\quad\gamma_n=\sum_{k=n}^{\infty}k(k-1)\cdots (k-n+1)g(k),\quad n\ge 1.
\]
It is well known that if $\gamma_n<\infty$, then $\gamma_n=\lim_{z\uparrow 1}d^n G(z)/dz^n$ and
\begin{eqnarray}
G(z)=\sum_{k=0}^{n}(-1)^k\frac{\gamma_k}{k!}(1-z)^k +o((1-z)^n)\quad\mbox{ as }\ z\uparrow 1.
\end{eqnarray}
Next, if $\gamma_n<\infty$, we introduce notations $G_n(\cdot)$ and $\widehat{G}_n(\cdot)$ as follows:
\begin{eqnarray}
G_n(z)&\stackrel{\rm def}{=}&(-1)^{n+1}\left(G(z)-\sum_{k=0}^{n}(-1)^k\frac{\gamma_k}{k!}(1-z)^k\right),\quad n\ge 0,\label{G_n(z)}\\
\widehat{G}_n (z)&\stackrel{\rm def}{=}&\frac{G_n(z)}{(1-z)^{n+1}},\quad n\ge 0.\label{widehat{G}_n (z)}
\end{eqnarray}
So,
\begin{eqnarray}
G(z)=\sum_{k=0}^{n}(-1)^k\frac{\gamma_k}{k!}(1-z)^k+(-1)^{n+1}G_n(z).
\end{eqnarray}
It follows that if $\gamma_n<\infty$, then for $n\ge 1$,
\begin{eqnarray}
G_{n-1}(z)&=&\frac {\gamma_{n}} {n!}(1-z)^n-G_n(z),\\
\widehat{G}_{n-1}(z)&=&\frac {\gamma_{n}} {n!}-(1-z)\widehat{G}_n (z),\label{widehat{G}_{n-1}(z)}\\
\widehat{G}_{n-1}(1)&=&\frac {\gamma_n} {n!}-\lim_{z\uparrow 1}\frac {G_{n}(z)} {(1-z)^n}=\frac {\gamma_n} {n!}.\label{widehat{G}_{n-1}(1)}
\end{eqnarray}

In the following Lemma, we verify that $\widehat{G}_n (z)$ is the GF of a nonnegative sequence.
To this end, we define recursively
\begin{eqnarray}
\overline{g}_{0}(j)&=&g(j),\quad j\ge 0,\label{overline{g}_{0}(j)}\\
\overline{g}_{n+1}(j)&=&\sum_{i=j+1}^{\infty}\overline{g}_{n}(i),\quad j\ge 0;\ n\ge 0.\label{overline{g}_{n+1}(j)}
\end{eqnarray}

\begin{lemma}\label{Lemma 4.1}
Suppose that $\{g(j)\}_{j=0}^{\infty}$ is a discrete probability distribution with
$\gamma_{n}<\infty,\ n\ge 0$. Then $\widehat{G}_k(z)$ is the GF of sequence
$\{\overline{g}_{k+1}(j)\}_{j=0}^{\infty}$ for $0\le k\le n$, that is,
\begin{eqnarray}
\sum_{j=0}^{\infty}\overline{g}_{k+1}(j)z^j&=&\widehat{G}_k(z),\quad 0\le k\le n.\label{Lemma 4.1formula}
\end{eqnarray}
\end{lemma}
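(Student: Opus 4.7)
The plan is to prove (\ref{Lemma 4.1formula}) by induction on $k$, using the recursion (\ref{widehat{G}_{n-1}(z)}) together with the elementary tail-generating-function identity already displayed in (\ref{overline{Q}(z)}). Throughout, convergence of $\sum_{j}\overline{g}_k(j)z^j$ on $|z|<1$ is guaranteed because $\{\overline{g}_k(j)\}_{j=0}^{\infty}$ is nonnegative and (as one checks by induction) monotonically decreasing in $j$ and bounded above by $\gamma_k/k!$, so all interchanges of summation below are legitimate.

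For the base case $k=0$, the definition (\ref{G_n(z)}) with $\gamma_0=1$ gives $G_0(z)=1-G(z)$, hence $\widehat{G}_0(z)=(1-G(z))/(1-z)$. Applying (\ref{overline{Q}(z)}) to $\{g(j)\}$ itself identifies this with $\sum_{j=0}^{\infty}\overline{g}_1(j)z^j$, which is the claim at level $0$.

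For the inductive step, assume (\ref{Lemma 4.1formula}) at some level $k-1$ with $1\le k\le n$. Abel's theorem combined with (\ref{widehat{G}_{n-1}(1)}) applied at level $k-1$ yields
\[
\sum_{j=0}^{\infty}\overline{g}_k(j)=\widehat{G}_{k-1}(1)=\frac{\gamma_k}{k!}.
\]
Repeating the manipulation behind (\ref{overline{Q}(z)}) for the (non-probability) sequence $\{\overline{g}_k(j)\}$ gives
\[
\sum_{j=0}^{\infty}\overline{g}_{k+1}(j)z^j=\frac{\sum_{i=0}^{\infty}\overline{g}_k(i)-\sum_{i=0}^{\infty}\overline{g}_k(i)z^i}{1-z}=\frac{\gamma_k/k!-\widehat{G}_{k-1}(z)}{1-z},
\]
where the inductive hypothesis is used in the second equality. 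On the other hand, (\ref{widehat{G}_{n-1}(z)}) rearranges to $\widehat{G}_k(z)=(\gamma_k/k!-\widehat{G}_{k-1}(z))/(1-z)$. Comparing the two expressions closes the induction.

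There is no serious obstacle; the proof is essentially a bookkeeping exercise. The only subtlety worth flagging is that for $k\ge 1$ the sequence $\{\overline{g}_k(j)\}$ is not a probability distribution, so the elementary identity (\ref{overline{Q}(z)}) has to be applied in its general form with total mass $\gamma_k/k!$ rather than $1$. This constant is exactly what the recursion (\ref{widehat{G}_{n-1}(z)}) requires at the corresponding level, and the identification $\widehat{G}_{k-1}(1)=\gamma_k/k!$ supplied by (\ref{widehat{G}_{n-1}(1)}) makes the match automatic.
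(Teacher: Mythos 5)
Your proof is correct and follows essentially the same route as the paper's: induction on $k$, with the base case given by the elementary tail-generating-function identity and the inductive step closed by combining $\widehat{G}_{k-1}(1)=\gamma_k/k!$ from (\ref{widehat{G}_{n-1}(1)}) with the recursion (\ref{widehat{G}_{n-1}(z)}). The remark about the total mass being $\gamma_k/k!$ rather than $1$ is a fair point of care, but it is implicitly handled the same way in the paper's argument.
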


\begin{proof}
Notice that
\begin{eqnarray}
\sum_{j=0}^{\infty}\overline{g}_{k+1}(j)z^j&=&\sum_{j=0}^{\infty}\left(\sum_{i=j+1}^{\infty}\overline{g}_{k}(i)\right)z^j
=\sum_{i=1}^{\infty}\sum_{j=0}^{i-1}\overline{g}_{k}(i) z^j=\frac {1} {1-z}\sum_{i=0}^{\infty} \overline{g}_{k}(i) (1-z^i).\label{Lemma 4.1proof-1}
\end{eqnarray}
Next, we proceed with the mathematical induction on $k$. For $k=0$,
\[
\sum_{j=0}^{\infty}\overline{g}_{1}(j)z^j=\frac {1-G(z)} {1-z}=\widehat{G}_0(z)\quad\mbox{(by (\ref{Lemma 4.1proof-1}), (\ref{G_n(z)}) and (\ref{widehat{G}_n (z)}))}.
\]
Under the induction hypothesis that (\ref{Lemma 4.1formula}) holds for $k=i-1\in \{0,1,\cdots,n-1\}$, we have
\begin{eqnarray}
\sum_{j=0}^{\infty}\overline{g}_{i+1}(j)z^j&=&\frac {\widehat{G}_{i-1}(1)-\widehat{G}_{i-1}(z)} {1-z}\quad\mbox{(by (\ref{Lemma 4.1proof-1}) and the induction hypothesis )} \nonumber\\
&=&\frac {\gamma_{i}/i!-\widehat{G}_{i-1}(z)} {1-z} \quad\mbox{(by (\ref{widehat{G}_{n-1}(1)}))} \nonumber\\
&=&\widehat{G}_i(z)\quad\mbox{(by (\ref{widehat{G}_{n-1}(z)}))}.\nonumber
\end{eqnarray}
Therefore, (\ref{Lemma 4.1formula}) holds for $k=i\in \{1,2,\cdots,n\}$, which completes the proof.
\end{proof}

\bigskip

The following lemma is referred to the Karamata's Tauberian theorem for power series.
%\begin{lemma}[Bingham, Goldie and Teugels~\cite{Bingham:1989}, p.40]
\begin{lemma}[Bingham et al.~\cite{Bingham:1989}, p.40]
\label{Lemma 4.2}
 Let $\{q(j)\}_{j=0}^{\infty}$ be a non-negative sequence
such that $Q(z)\stackrel{\rm def}{=}\sum_{j=0}^{\infty}q(j)z^j$ converges for $0\le z<1$,
let $L(\cdot)$ be slowly varying at $\infty$, and $b\ge 0$, then the following two statements are equivalent:
\begin{flalign}
\begin{split}
\mbox{(i)}&\quad Q(z)\sim(1-z)^{-b}L\left(1/(1-z)\right),\quad z\uparrow 1; \label{Lemma 4.2formula-1} \mbox{ and }
\end{split}&\\
\begin{split}
\mbox{(ii)}&\quad \sum_{k=0}^{j}q(k)\sim\frac 1 {\Gamma(b+1)}j^{b}L(j), \quad j\to\infty.\label{Lemma 4.2formula-2}
\end{split}&
\end{flalign}

Furthermore, if the sequence $\{q(j)\}_{j=0}^{\infty}$ is ultimately monotonic and $b>0$, then both (i) and (ii) are equivalent to
\begin{flalign}
\begin{split}
\mbox{(iii)}&\quad q(j)\sim\frac 1 {\Gamma(b)}j^{b-1}L(j),\quad j\to\infty.  \label{Lemma 4.2formula-3}
\end{split}&
\end{flalign}
\end{lemma}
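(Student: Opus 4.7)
The plan is to reduce the statement to the Karamata Tauberian theorem for Laplace--Stieltjes transforms and then transfer back to power series by the substitution $z=e^{-s}$. First I would set $U(j)=\sum_{k=0}^{j}q(k)$ and view $U$ as the cumulative distribution function of the discrete measure $\sum q(j)\delta_j$ on $[0,\infty)$. With $z=e^{-s}$ one has $1-z\sim s$ and $L(1/(1-z))\sim L(1/s)$ as $s\downarrow 0$, so statement (i) is equivalent to
\begin{equation}
\widehat{U}(s):=\sum_{j=0}^{\infty}q(j)e^{-js}\sim s^{-b}L(1/s),\qquad s\downarrow 0,\label{plan-LS}
\end{equation}
and statement (ii) is equivalent to $U(x)\sim x^{b}L(x)/\Gamma(b+1)$ as $x\to\infty$, since $U$ is non-decreasing and piecewise constant.

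The Abelian direction (ii)$\Rightarrow$(i) is routine: integration by parts gives $\widehat{U}(s)=s\int_{0}^{\infty}e^{-sx}U(x)\,dx$; changing variable $y=sx$ and using the uniform convergence theorem for slowly varying functions together with dominated convergence (the function $e^{-y}y^{b}$ is integrable) yields $\widehat{U}(s)\sim s^{-b}L(1/s)\cdot\Gamma(b+1)/\Gamma(b+1)$, which is (\ref{plan-LS}). For the Tauberian direction (i)$\Rightarrow$(ii), I would invoke Karamata's classical argument. Rescale by setting $\nu_{s}(A)=s^{b}L(1/s)^{-1}\int_{A}dU(x/s)$; then (\ref{plan-LS}) says that $\int_{0}^{\infty}e^{-y}\,d\nu_{s}(y)\to 1$, and by replacing $s$ with $s/\lambda$ the same holds with $e^{-\lambda y}$ for every $\lambda>0$, where the limit equals the corresponding moment $\lambda^{-b}$ of the measure with density $y^{b-1}/\Gamma(b)$. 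A Stone--Weierstrass argument shows that finite linear combinations of exponentials $\{e^{-\lambda y}\}_{\lambda>0}$ are uniformly dense in $C_{0}([0,\infty))$, so $\nu_{s}$ converges vaguely to that limit measure. Crucially, the non-negativity of $\{q(j)\}$ ensures that each $\nu_{s}$ is a genuine (non-signed) measure, so the indicator $\mathbbm{1}_{[0,1]}$ may be sandwiched between compactly supported continuous functions; passing to the limit gives $\nu_{s}([0,1])\to 1/\Gamma(b+1)$, which is (ii).

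For the final equivalence with (iii) under the ultimate-monotonicity hypothesis and $b>0$, I would simply apply the monotone density theorem in its discrete form, stated as Lemma~\ref{Lemma 3.3} in the appendix: (iii)$\Rightarrow$(ii) is its direct half applied to $q(j)\sim j^{b-1}L(j)/\Gamma(b)$, and (ii)$\Rightarrow$(iii) is its converse half, using the monotonicity of the tail of $\{q(j)\}$. The main obstacle is the Tauberian step, since this is the only place where the non-negativity hypothesis is used in an essential way, and the passage from exponential test functions to indicator functions via sandwiching must be carried out carefully; the Abelian direction and the passage from partial sums to pointwise asymptotics are comparatively mechanical.
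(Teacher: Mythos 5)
The paper does not prove Lemma~\ref{Lemma 4.2} at all: it is quoted verbatim from Bingham, Goldie and Teugels~\cite{Bingham:1989} (Karamata's Tauberian theorem for power series), so your proposal is being measured against the classical proof rather than anything in the text. Your main machinery is exactly that classical route and is essentially sound: the substitution $z=e^{-s}$, the identity $\widehat U(s)=s\int_0^\infty e^{-sx}U(x)\,dx$, the Abelian direction by rescaling $y=sx$, and the Tauberian direction via Karamata's rescaled measures $\nu_s$, convergence on the exponential test functions $e^{-\lambda y}$, and sandwiching the indicator of $[0,1]$. Two small points deserve care: the dominated-convergence step in the Abelian direction needs Potter-type bounds on $L(y/s)/L(1/s)$ (the uniform convergence theorem alone does not control $y$ near $0$ or $\infty$), and for $b=0$ the limit measure must be taken to be the unit mass at the origin, not one with density $y^{b-1}/\Gamma(b)$; one should also observe that the limit measure puts no mass at the point $1$, so the sandwich of the indicator passes to the limit.

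The genuine gap is the last step, the equivalence of (\ref{Lemma 4.2formula-2}) and (\ref{Lemma 4.2formula-3}). Lemma~\ref{Lemma 3.3} is the \emph{convergent-tail} form of the discrete Karamata/monotone density theorem: it assumes $q(j)\sim j^{-b'}L(j)$ with $b'>1$ and relates $q(j)$ to the tails $\sum_{k=j+1}^{\infty}q(k)$. Here $q(j)\sim j^{b-1}L(j)/\Gamma(b)$ with $b>0$, so the exponent is $b-1>-1$, the tail series diverges, and statement (ii) concerns the partial sums $\sum_{k=0}^{j}q(k)$, not tails; neither half of Lemma~\ref{Lemma 3.3} applies as stated. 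What you need is the divergent (partial-sum) version, which you must supply separately, e.g.\ by the standard sandwich: with $U(j)=\sum_{k=0}^{j}q(k)$ and $q$ ultimately non-increasing (the non-decreasing case is symmetric), for $\lambda>1$ one has $(\lfloor\lambda j\rfloor-j)\,q(\lfloor\lambda j\rfloor)\le U(\lfloor\lambda j\rfloor)-U(j)\le(\lfloor\lambda j\rfloor-j)\,q(j+1)$; by (ii) the middle term is $(\lambda^{b}-1)j^{b}L(j)/\Gamma(b+1)+o(j^{b}L(j))$, and dividing by $(\lambda-1)j\cdot j^{b-1}L(j)$ and letting $\lambda\downarrow1$ gives exactly (iii) since $b/\Gamma(b+1)=1/\Gamma(b)$; the converse (iii)$\Rightarrow$(ii) is the elementary Abelian half for divergent sums (no monotonicity needed) obtained by summing the asymptotic relation over blocks or comparing with $\int_0^{j}x^{b-1}L(x)\,dx$. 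With that replacement (or with a citation to the correct partial-sum monotone density theorem in \cite{Bingham:1989}) your proof is complete; as written, the appeal to Lemma~\ref{Lemma 3.3} is a step that fails because its hypotheses are not met.
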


\begin{lemma}\label{Lemma 4.3}
Let $\{g(j)\}_{j=0}^{\infty}$ be a discrete probability distribution with the GF $G(z)$.
Assume that $n< d<n+1$ for some $n\in\{0,1,2,\cdots\}$. The sequence $\{\overline{g}_{n+1}(j)\}_{j=0}^{\infty}$ is defined by (\ref{overline{g}_{n+1}(j)}).
Let $L(\cdot)$ be slowly varying. The following two statements are equivalent:
\begin{flalign}
\begin{split}
\mbox{(i)}&\quad G_n(z)\sim(1-z)^{d}L(1/(1-z)),\quad z\uparrow 1; \label{Lemma 4.3formula-1} \mbox{ and }
\end{split}&\\
\begin{split}
\mbox{(ii)}&\quad \overline{g}_{1}(j)\sim\frac{\Gamma(d)} {\Gamma(d-n)\Gamma(n+1-d)} j^{-d} L(j), \quad j\to\infty.\label{Lemma 4.3formula-2}
\end{split}&
\end{flalign}
\end{lemma}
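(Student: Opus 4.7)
My plan is to prove the equivalence by chaining two classical Tauberian-type results: Karamata's Tauberian theorem for power series (Lemma~\ref{Lemma 4.2}) and the discrete Karamata theorem (Lemma~\ref{Lemma 3.3}), linking them via the identification $\widehat{G}_n(z) = \sum_{j=0}^{\infty}\overline{g}_{n+1}(j) z^j$ supplied by Lemma~\ref{Lemma 4.1}.

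First I would reduce (i) to an asymptotic statement about $\widehat{G}_n$. Since $\widehat{G}_n(z) = G_n(z)/(1-z)^{n+1}$, statement (i) is equivalent to
\[
\widehat{G}_n(z)\sim (1-z)^{-(n+1-d)}L(1/(1-z)),\quad z\uparrow 1.
\]
Because $n<d<n+1$, the exponent $b:=n+1-d$ lies in $(0,1)$, so Lemma~\ref{Lemma 4.2} is in scope. Lemma~\ref{Lemma 4.1} tells us that $\widehat{G}_n$ is the GF of the tail sequence $\{\overline{g}_{n+1}(j)\}_{j\ge 0}$, which is non-increasing (being a tail sum of a non-negative sequence). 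Hence, by the equivalence (i)$\Leftrightarrow$(iii) of Lemma~\ref{Lemma 4.2}, the displayed asymptotic for $\widehat{G}_n$ is equivalent to
\[
\overline{g}_{n+1}(j)\sim \frac{1}{\Gamma(n+1-d)}\,j^{-(d-n)}L(j),\quad j\to\infty.
\]

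Next I would iterate Lemma~\ref{Lemma 3.3} to pass between $\overline{g}_{n+1}$ and $\overline{g}_1$. From $\overline{g}_{k+1}(j)=\sum_{i=j+1}^{\infty}\overline{g}_k(i)$, if $\overline{g}_k(j)\sim C_k\,j^{-(d-k+1)}L(j)$ with $d-k+1>1$, then the forward direction of Lemma~\ref{Lemma 3.3} gives $\overline{g}_{k+1}(j)\sim \frac{C_k}{d-k}\,j^{-(d-k)}L(j)$; conversely, if the left-hand side is known and $\overline{g}_k$ is ultimately monotonic, the converse direction recovers the right-hand side. Crucially, for every $k\ge 1$ the sequence $\overline{g}_k$ is a tail sum of non-negative terms, hence non-increasing, so the monotonicity hypothesis of the converse is automatic at each stage; moreover $d>n\ge k$ keeps the exponent $d-k+1$ above $1$ throughout the iteration. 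Running the recursion from $k=1$ up to $k=n+1$ gives
\[
C_{n+1}=\frac{C_1}{(d-1)(d-2)\cdots(d-n)}=\frac{C_1\,\Gamma(d-n)}{\Gamma(d)},
\]
so the constants $C_1=\Gamma(d)/[\Gamma(d-n)\Gamma(n+1-d)]$ and $C_{n+1}=1/\Gamma(n+1-d)$ are in exact correspondence. Combining the two chains of equivalences yields (i)$\Leftrightarrow$(ii).

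The main obstacle, such as it is, is bookkeeping rather than analysis: (a) ensuring that the converse direction of Lemma~\ref{Lemma 3.3} is legitimately invoked at every step (handled by the observation that $\overline{g}_k$ is non-increasing for every $k\ge 1$, and by the condition $d>n$ which keeps the tail exponent in the permissible range $b>1$); and (b) verifying that the telescoping product $(d-n)(d-n+1)\cdots(d-1)=\Gamma(d)/\Gamma(d-n)$ lines up so that the constant $1/\Gamma(n+1-d)$ appearing after Karamata's Tauberian theorem matches the constant $\Gamma(d)/[\Gamma(d-n)\Gamma(n+1-d)]$ in (ii). No deeper fact about subexponentiality or the de~Haan class $\Pi$ is needed, because the assumption $n<d<n+1$ keeps us strictly inside the Karamata regime (integer exponents, which would have forced the slowly varying correction through logarithmic terms, are excluded).
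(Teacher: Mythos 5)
Your proposal is correct and follows essentially the same route as the paper's own proof: reduce (i) to an asymptotic for $\widehat{G}_n(z)$, invoke Lemma~\ref{Lemma 4.1} to identify $\widehat{G}_n$ as the GF of the monotone tail sequence $\{\overline{g}_{n+1}(j)\}$, apply Lemma~\ref{Lemma 4.2} with $b=n+1-d\in(0,1)$, and then iterate Lemma~\ref{Lemma 3.3} back to $\overline{g}_1$, matching constants via $\Gamma(d)=(d-1)\cdots(d-n)\Gamma(d-n)$. Your extra bookkeeping on the monotonicity of each $\overline{g}_k$ and on the exponents staying above $1$ during the iteration is exactly the justification the paper leaves implicit.
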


\begin{proof} By the definition of $\widehat{G}_n(z)$ in (\ref{widehat{G}_n (z)}),
(\ref{Lemma 4.3formula-1}) is equivalent to
\begin{eqnarray}
\widehat{G}_n(z)\sim(1-z)^{-(n+1-d)} L\left(1/(1-z)\right). \label{Lemma 4.3proof-1}
\end{eqnarray}
Note that $0< n+1-d<1$ and
the sequence $\{\overline{g}_{n+1}(j)\}_{j=0}^{\infty}$ is decreasing with the GF $\widehat{G}_n(z)$ (by Lemma \ref{Lemma 4.1}).
Applying Lemma \ref{Lemma 4.2} (taking $b=n+1-d$ in (\ref{Lemma 4.2formula-1}) and (\ref{Lemma 4.2formula-3})), we know that (\ref{Lemma 4.3proof-1}) is equivalent to
\begin{eqnarray}
\overline{g}_{n+1}(j)\sim\frac {1} {\Gamma(n+1-d)} j^{-d+n} L(j),\quad j\to\infty.  \label{Lemma 4.3proof-2}
\end{eqnarray}

Next, we prove the equivalence of (\ref{Lemma 4.3formula-2}) and (\ref{Lemma 4.3proof-2}).
Noting the recursive relation (\ref{overline{g}_{n+1}(j)}) and repeatedly applying Lemma \ref{Lemma 3.3}, (\ref{Lemma 4.3proof-2}) is equivalent to
\begin{eqnarray}
\overline{g}_{1}(j)&\sim& \frac {(d-1)\cdots (d-n)} {\Gamma(n+1-d)} j^{-d} L(j),\quad j\to\infty.
\end{eqnarray}
Note that $\Gamma(d)=(d-1)\cdots (d-n)\Gamma(d-n)$.
The proof is completed.
\end{proof}

%\begin{definition}[e.g., Bingham, Goldie and Teugels~\cite{Bingham:1989}, or Resnick~\cite{Resnick:2008}] \label{II}
\begin{definition}[e.g., Bingham et al.~\cite{Bingham:1989}, or Resnick~\cite{Resnick:2008}] \label{II}
 A function $F:$ $(0,\infty)\to (0,\infty)$ belongs to the de Haan class $\Pi$ at $\infty$ if there exists a function $H:(0,\infty)\to (0,\infty)$ such that
\begin{eqnarray}
\lim_{t\uparrow \infty}\frac {F(xt)-F(t)}{H(t)}&=&\log x\quad\mbox{ for all }x>0,\label{deHaan1}
\end{eqnarray}
where the function $H$ is called the auxiliary function of $F$.
Similarly, $F(t)$ belongs to the class $\Pi$ at $0$ if $F(1/t)$ belongs to $\Pi$ at $\infty$, or equivalently, there exists a function $H:(0,\infty)\to (0,\infty)$ such that
\begin{eqnarray}
\lim_{u\downarrow 0}\frac {F(xu)-F(u)}{H(u)}&=&-\log x\quad\mbox{ for all }x>0.\label{deHaan2}
\end{eqnarray}
\end{definition}

\section{Proof of $\Delta$-analyticity}
\label{app-B}

Recall that a domain is a $\Delta$-domain at $1$ if it is in the form
$\Delta=\{z|\ |z|<R,\ z\neq 1,\ |\mbox{Arg}(z-1)|>\phi\}$ for some $R>1$ and $0<\phi<\frac {\pi} 2$. A function is $\Delta$-analytic at $1$ if it is analytic in some $\Delta$-domain at $1$.

\subsection{Example~A}

For this example, we prove, in the following, that $Ez^{L_{\infty}}$ can be analytically continued to $\mathbb{C}\backslash [1,\infty)$, which implies the $\Delta$-analyticity.

Recall that (\ref{EzL{infty}}) and (\ref{exmp-Kcirc}). Note that $\beta(s)$ and $\beta^{(e)}(s)$ are analytic in $\mathbb{D}=\mathbb{C}\backslash(-\infty,0]$, and both $X(z)$ and $X^{(de)}(z)$ can be analytically continued to
$\mathbb{C}\backslash [1,\infty)$.
For $Ez^{L_{\infty}}$ to be analytic in $\mathbb{C}\backslash [1,\infty)$,  we only need to verify the
two things: (i) $\lambda (1-X(z))\in \mathbb{D}$ for any $z\in \mathbb{C}\backslash [1,\infty)$; (ii)
$1-\rho\beta^{(e)}(\lambda-\lambda X(z))\cdot X^{(de)}(z)$ is nonzero in $\mathbb{C}\backslash [1,\infty)$.

(i) Let $s=(1-z)/(1-q)=x+iy$. By (\ref{exmp-X(z)-asym}),
\[
1-X(z)=\frac {s} {1+qs} = \frac {(x+iy)(1+qx-iqy)} {(1+qx)^2+(qy)^2}=\frac {x(1+qx)+qy^2+iy} {(1+qx)^2+(qy)^2}.
\]
It follows that if $\mbox{Im}(z)\not= 0$ (or say $y\not= 0$), then $\mbox{Im}\big(1-X(z)\big)\not= 0$. Additionally, for any real $z\in (-\infty,1)$ (or say $s>0$), $1-X(z)=s/(1+qs)>0$. Hence, $\lambda (1-X(z))\in \mathbb{D}$ for any $z\in \mathbb{C}\backslash [1,\infty)$.

(ii) By Lemma 1 in \cite{Ramsay:2007}, for $s\in \mathbb{D}$,
\begin{eqnarray}
\beta^{(e)}(s)&=&\frac {a-1} {\Gamma(a)} \int_0^{\infty}\frac {t^{a-1}e^{-t}} {t+b s} dt. \label{exmpA-beta(e)-333}
\end{eqnarray}

Set $s=(1-z)/(1-q)=x+iy$. By the expressions of $1-X(z)=s/(1+qs)$ and $X^{(de)}(z)=1/(1+qs)$, and using (\ref{exmpA-beta(e)-333}), we have
\begin{eqnarray}
\beta^{(e)}(\lambda-\lambda X(z))\cdot X^{(de)}(z)
&=&\beta^{(e)}\Big(\frac {\lambda s} {1+qs}\Big)\cdot \frac {1} {1+qs}\nonumber\\
&=&\frac {a-1} {\Gamma(a)} \int_0^{\infty}\frac {t^{a-1}e^{-t}} {(1+qs)t+\lambda b s} dt\nonumber\\
&=&\frac {a-1} {\Gamma(a)} \int_0^{\infty}\frac {t^{a-1}e^{-t}} {(1+qx)t+\lambda b x+iy(qt+\lambda b)} dt\nonumber\\
&=&\frac {a-1} {\Gamma(a)} \int_0^{\infty}\frac {[(1+qx)t+\lambda b x-iy(qt+\lambda b)]t^{a-1}e^{-t}} {[(1+qx)t+\lambda b x]^2+[y(qt+\lambda b)]^2} dt,
\end{eqnarray}
from which, we know that $\mbox{Im}(\beta^{(e)}\left(\lambda-\lambda X(z))\cdot X^{(de)}(z)\right) \neq 0$ for $y\neq 0$ or $\mbox{Im}(z)\not=0$. So,
$1-\rho\beta^{(e)}(\lambda-\lambda X(z))\cdot X^{(de)}(z)$ is nonzero in $\mathbb{C}\backslash (-\infty,\infty)$.
In addition, for any real $z\in (-\infty,1)$, we have $s=x> 0$, then
$\beta^{(e)}(\lambda-\lambda X(z))\cdot X^{(de)}(z)=\beta^{(e)}\big(\frac {\lambda x} {1+qx}\big)\cdot \frac {1} {1+qx}< 1$. So,
$1-\rho\beta^{(e)}(\lambda-\lambda X(z))\cdot X^{(de)}(z)$ is nonzero for $z\in(-\infty,1)$. This completes the proof that
$Ez^{L_{\infty}}$ is analytic in $\mathbb{C}\backslash [1,\infty)$, which obviously implies the $\Delta$-analyticity of $Ez^{L_{\infty}}$ at 1.

\subsection{Example~B}

Similarly, for this example, we can show that $Ez^{L_{\infty}}$ can be analytically continued to $\mathbb{C}\backslash [1,\infty)$, which implies the $\Delta$-analyticity.

Note that $\tau(s)$ and $\tau^{(e)}(s)$, $\beta(s)$ and $\beta^{(e)}(s)$ can be analytically continued to $\mathbb{D}=\mathbb{C}\backslash(-\infty,0]$, and both $X(z)$ and $X^{(de)}(z)$ can be analytically continued to
$\mathbb{C}\backslash [1,\infty)$.
For $Ez^{L_{\infty}}$ to be analytic in $\mathbb{C}\backslash [1,\infty)$,  we only need to verify the
two things: (i) $\lambda (1-X(z))\in \mathbb{D}$ for any $z\in \mathbb{C}\backslash [1,\infty)$; (ii)
$1-\rho\beta^{(e)}(\lambda-\lambda X(z))\cdot X^{(de)}(z)$ is nonzero in $\mathbb{C}\backslash [1,\infty)$.

(i) Similar to (\ref{exmpA-beta(e)-333}), we know that
\begin{eqnarray}
\tau^{(e)}(s)=\frac {u-1} {\Gamma(u)} \int_0^{\infty}\frac {t^{u-1}e^{-t}} {t+v s} dt, \qquad\mbox{ for }s\in \mathbb{\mathbb{D}}=\mathbb{C}\backslash(-\infty,0].
\end{eqnarray}
By the definition of $X(z)$, we can immediately write
\begin{eqnarray}
1-X(z)=1-zX_0(z)=1-z\tau(1-z).\label{examp2-delta-analy-1}
\end{eqnarray}
Set $s=1-z=x+iy$ in the above equality. Note that $\tau(s)=1-\tau_1 s\tau^{(e)}(s)$. We then have
\begin{eqnarray}
1-X(z)=1-(1-s)\tau(s)&=&s\big[1+\tau_1\cdot (1-s)\tau^{(e)}(s)\big]=s\varphi(s),\label{examp2-delta-analy-2}
\end{eqnarray}
where
\begin{eqnarray}
\varphi(s)&=&1+\tau_1\cdot (1-s)\tau^{(e)}(s)\nonumber\\
&=&1+ \frac {v} {\Gamma(u)} \int_0^{\infty}\frac {1-s} {t+v s}t^{u-1}e^{-t} dt\nonumber\\
&=&1+ \frac {v} {\Gamma(u)} \int_0^{\infty}\frac {(1-x)(t+vx)-vy^2-(t+v)yi} {(t+v x)^2+(vy)^2}t^{u-1}e^{-t} dt\nonumber\\
&\stackrel{\rm def}{=}&f(x,y)-iyg(x,y). \label{examp2-delta-analy-200}
\end{eqnarray}
It is worthwhile to mention that $g(x,y)=\frac {v} {\Gamma(u)} \int_0^{\infty}\frac {t+v} {(t+v x)^2+(vy)^2}\cdot t^{u-1}e^{-t} dt>0$ for all $x>0$ and $y\not=0$.
It follows from (\ref{examp2-delta-analy-2}) and (\ref{examp2-delta-analy-200}) that
\begin{eqnarray}
\mbox{Im}\big(1-X(z))&=&y[f(x,y)-xg(x,y)]\nonumber\\
&=&y\Big[1+\frac {v} {\Gamma(u)} \int_0^{\infty}\frac {t-2xt-vx^2-vy^2} {(t+v x)^2+(vy)^2}\cdot t^{u-1}e^{-t} dt\Big]\nonumber\\
&=&y\cdot\frac {1} {\Gamma(u)} \int_0^{\infty}\frac {vt+t^2} {(t+v x)^2+(vy)^2}\cdot t^{u-1}e^{-t} dt.
\end{eqnarray}
Therefore,
if $\mbox{Im}(z)\not= 0$ (or $y\not= 0$), then $\mbox{Im}\big(1-X(z)\big)\not= 0$. Additionally, for any real $z\in (-\infty,1)$ (or $s>0$), $1-X(z)\ge 1-(1-s)>0$ (see (\ref{examp2-delta-analy-2})). Hence, $\lambda (1-X(z))\in \mathbb{D}$ for any $z\in \mathbb{C}\backslash [1,\infty)$.

(ii) It follows from (\ref{examp2-delta-analy-2}) and (\ref{examp2-delta-analy-200}) that
\begin{eqnarray}
X^{(de)}(z)&=&\frac {1-X(z)}{\chi_1(1-z)}=\frac {\varphi(s)} {1+ \tau_1},\label{examp2-delta-analy-3}\\
\frac 1 {\varphi(s)}&=&\frac {f(x,y)+iyg(x,y)} {(f(x,y))^2+(yg(x,y))^2}\stackrel{\rm def}{=}f_0(x,y)+iyg_0(x,y),\label{examp2-delta-analy-7}
\end{eqnarray}
where $g_0(x,y)=\frac {g(x,y)} {(f(x,y))^2+(yg(x,y))^2}>0$ for all $x>0$ and $y\not=0$.
It follows from (\ref{examp2-delta-analy-2}), (\ref{examp2-delta-analy-3}), and (\ref{exmpA-beta(e)-333}) that
\begin{eqnarray}
&&\beta^{(e)}(\lambda-\lambda X(z))\cdot X^{(de)}(z)=\beta^{(e)}\big(\lambda s\varphi(s)\big)\cdot \frac {\varphi(s)} {1+\tau_1}\nonumber\\
&=&\frac {a-1} {\Gamma(a)(1+\tau_1)} \int_0^{\infty}\frac {t^{a-1}e^{-t}} {\frac t {\varphi(s)}+\lambda b s} dt\nonumber\\
&=&\frac {a-1} {\Gamma(a)(1+\tau_1)} \int_0^{\infty}\frac {t^{a-1}e^{-t}} {tf_0(x,y)+\lambda b x +iy(tg_0(x,y)+\lambda b)} dt\quad\mbox{(by  (\ref{examp2-delta-analy-7}))}\nonumber\\
&=&\frac {a-1} {\Gamma(a)(1+\tau_1)} \int_0^{\infty}\frac {[tf_0(x,y)+\lambda b x -iy(tg_0(x,y)+\lambda b)]t^{a-1}e^{-t}}
{[tf_0(x,y)+\lambda b x]^2+y^2(tg_0(x,y)+\lambda b)^2} dt,
\end{eqnarray}
from which, we know that $\mbox{Im}(\beta^{(e)}\left(\lambda-\lambda X(z))\cdot X^{(de)}(z)\right) \neq 0$ for all $y\neq 0$. So,
$1-\rho\beta^{(e)}(\lambda-\lambda X(z))\cdot X^{(de)}(z)$ is nonzero in $\mathbb{C}\backslash (-\infty,\infty)$.
Furthermore, for any real $z\in (-\infty,1)$, we have $s=x> 0$ and by (\ref{examp2-delta-analy-200}),
\begin{eqnarray}
\varphi(x)&=&1+ \frac {v} {\Gamma(u)} \int_0^{\infty}\frac {1-x} {t+v x}t^{u-1}e^{-t} dt=\frac {1} {\Gamma(u)} \int_0^{\infty}\frac {t+v} {t+v x}t^{u-1}e^{-t} dt>0,\nonumber\\
\varphi(x)&<&1+ \frac {v} {\Gamma(u)} \int_0^{\infty}\frac {1} {t}t^{u-1}e^{-t} dt=1+\frac {v} {u-1}=1+\tau_1,
\end{eqnarray}
hence
$\beta^{(e)}(\lambda-\lambda X(z))\cdot X^{(de)}(z)=\beta^{(e)}\big(\lambda x\varphi(x)\big)\cdot \frac {\varphi(x)} {1+\tau_1}< 1$. So,
$1-\rho\beta^{(e)}(\lambda-\lambda X(z))\cdot X^{(de)}(z)$ is nonzero for $z\in(-\infty,1)$. Now, we have completed the proof that
$Ez^{L_{\infty}}$ is analytic in $\mathbb{D}=\mathbb{C}\backslash [1,\infty)$, which implies the $\Delta$-analyticity of $Ez^{L_{\infty}}$ at 1.

\end{document}